\def\CMPsh{Commun.~Math.~Phys.}
\def\es{\emptyset}
\def\eps{\varepsilon}
\def\R{{\mathbb R}}
\def\Z{{\mathbb Z}}
\def\cA{{\mathcal{A}}}
\def\cC{{\cal C}}
\def\cF{{\cal F}}
\def\cG{{\cal G}}
\def\cN{{\cal N}}
\def\cP{{\cal P}}
\def\cR{{\cal R}}
\def\cS{{\cal S}}
\def\cT{{\cal T}}
\def\cX{{\cal{X}}}
\def\dist{\mathrm{dist}}
\def\length{\mathrm{length}}
\def\diam{\mathrm{diam}}
\def\deg{\mathrm{deg}}
\def\AUT{\mathrm{AUT}}
\def\Om{\Omega}
\def\bt{\mathbf{t}}
\def\LE{\mathrm{LE}}
\def\P{\mathbf{P}}
\def\Unif{\mathrm{Unif}}
\def\la{\lambda}
\newcommand{\eqn}[2]{\begin{equation}\label{#1}#2\end{equation}}
\newcommand{\eqnst}[1]{\begin{equation*}#1\end{equation*}}
\newcommand{\eqnspl}[2]{\begin{equation}\begin{split}\label{#1}%
    #2\end{split}\end{equation}}
\newcommand{\eqnsplst}[1]{\begin{equation*}\begin{split}%
    #1\end{split}\end{equation*}}
\theoremstyle{plain}
\newtheorem{theorem}{Theorem}
\newtheorem{lemma}{Lemma}
\newtheorem{proposition}{Proposition}
\newtheorem{corollary}{Corollary}
\newtheorem{question}{Open question}
\theoremstyle{definition}
\newtheorem{definition}{Definition}
\newtheorem{assumption}{Assumption}
\theoremstyle{remark}
\newtheorem{remark}{Remark}
\begin{document}

\title{Abelian sandpiles: an overview and 
results on certain transitive graphs}
\author{Antal A.~J\'arai}

\maketitle

\begin{abstract}
We review the Majumdar-Dhar bijection between recurrent
states of the Abelian sandpile model and spanning trees.
We generalize earlier results of Athreya and J\'arai 
on the infinite volume limit of the stationary distribution
of the sandpile model on $\Z^d$, $d \ge 2$, to a large 
class of graphs. This includes: (i) graphs on which the
wired spanning forest is connected and has one end;
(ii) transitive graphs with volume growth at least 
$c n^5$ on which all bounded harmonic functions are
constant.
We also extend a result of Maes, Redig and Saada on 
the stationary distribution of sandpiles on infinite
regular trees, to arbitrary exhaustions.
\end{abstract}

\section{Introduction}

This paper is based on a talk given at an IRS meeting
in Paris\footnote{Inhomogeneous Random Systems, 
\emph{Stochastic Geometry and Statistical Mechanics}, 
Institut Henri Poincar\'e, Paris, 27 January 2010.}, 
and contains most of the results discussed in the talk, 
with proofs. We give an overview of the Abelian sandpile 
model, with particular emphasis on the Majumdar-Dhar bijection.
Then we discuss recent results on the infinite volume limit of
the model on certain transitive graphs.

The Abelian sandpile model and close variants were
discovered independently in various contexts. Our focus 
here will be the context of probability models 
on graphs; see the references in \cite{HLMPPW} 
for surprising connections with other fields of mathematics.
``Sandpile'' models were introduced by 
Bak, Tang and Wiesenfeld \cite{BTW87} as simple toy
examples, in an attempt to explain the physical mechanisms
underlying the widespread occurrence of power-law 
distributions and fractals in nature.
They introduced the idea of self-organized criticality (SOC)
as a possible mechanism, and studied ``sandpile'' models
numerically to support their claims.
The importance of the model as a theoretical tool to
study SOC was recognized by Dhar \cite{Dhar90}, who 
generalized it and discovered some of its fundamental 
properties, including the Abelian property. Dhar coined the 
name ``Abelian sandpile''. The definition of the model 
is recalled in Section \ref{sec:model}, and the key
results needed are summarized in Section \ref{sec:MDbij}.
For further background on the mathematical results, 
see the survey by Redig \cite{Redig}. See also the paper
by Holroyd, Levine, M\'esz\'aros, Peres, Propp and Wilson
\cite{HLMPPW} for a rigorous and self-contained introduction 
as well as an account of the connection of sandpiles with the 
rotor-router model. 
The paper \cite{HLMPPW} also contains extensions to 
directed graphs of some of the results discussed in 
Sections \ref{sec:model} and \ref{sec:MDbij}.
See the survey by Dhar \cite{Dhar06} for the 
theoretical physics context.

Our main focus will be the following type of question.
Let $G = (V, E)$ be an infinite, locally finite
graph, for example $\Z^d$, or the Cayley graph of a finitely
generated discrete group. Let $V_1 \subset V_2 \subset \dots \subset V$
be a sequence of finite subsets such that 
$\cup_{n=1}^\infty V_n = V$. Do the Abelian 
sandpile models on the $V_n$'s converge to a limiting model
on $V$? 

The above question was first addressed in the case of
$\Z$ by Maes, Redig, Saada and Van Moffaert \cite{MRSvM}. 
Here the limiting model has a 
trivial stationary distribution, nevertheless the question
of convergence to this distribution is non-trivial \cite{MRSvM}.

Maes, Redig and Saada \cite{MRS02} considered sandpiles on
infinite regular trees.
A stationary Markov process was constructed, obtained as 
the limit of sandpile Markov chains on finite subgraphs.
For the most part, the construction given in \cite{MRS02}
is very general, and applies to a general infinite 
graph. There were two key steps, however, that were 
specific to the tree: (i) to show that the stationary 
measures of sandpiles on a suitable sequence of finite 
subgraphs converge weakly to a unique (automorphism 
invariant) limit; (ii) to show that avalanches are 
almost surely finite in the limit. These steps were 
carried out making use of results of Dhar and 
Majumdar \cite{DM90}.

Maes, Redig and Saada \cite{MRS04} studied a 
so-called \emph{dissipative} 
version of the $\Z^d$ model, where particles are removed
on each toppling (not only at the boundary). The presence
of dissipation introduces fast decay of correlations.
Making use of this, the steps (i)--(ii) above could be 
carried out, and the infinite volume process was constructed. 
A nice feature of the limiting process is that it is shown 
to live on a compact Abelian group, extending the finite 
volume formalism.

For the usual (non-dissipative) model on $\Z^d$, the step 
(i) above for $d \ge 2$ was solved 
by Athreya and J\'arai \cite{AJ04}, and step (ii) 
for $d \ge 3$ was solved by J\'arai and Redig \cite{JR08}. 
The main new ingredient in these papers was to exploit a 
result of Majumdar and Dhar \cite{MD92} that gives a 
bijection between the recurrent states of the sandpile model
and wired spanning trees of the underlying graph.
This made it possible to use techniques from the theory 
of uniform spanning trees, in particular Pemantle's 
theorem \cite{Pem} on the existence of the wired uniform
spanning forest. There is a difference between the cases
$2 \le d \le 4$ and $d \ge 5$, that are a reflection 
of Pemantle's result that in the former case the 
spanning forest is a.s.~connected, while in the latter 
case it is not. Another essential ingredient is that 
the each tree in the spanning forest has one end. This was
first proved by Pemantle \cite{Pem} and 
Benjamini, Lyons, Peres and Schramm
\cite{BLPS01}. (In what follows, we will abreviate the
latter authors to BLPS.)

The surveys \cite{MRS05} and \cite{Jarai05} discuss the
above developments. 

More recently, a continuous height dissipative model was 
studied on $\Z^d$ by J\'arai, Redig and Saada \cite{JRS10}. 
This extends the discrete dissipative model considered 
in \cite{MRS04} by allowing the amount dissipated per toppling 
to be any non-negative real value, rather than an
integer. This has the advantage that the limit of
zero dissipation can be formulated precisely. In this limit, 
the discrete non-dissipative model is recovered. This work is 
also based on an adaptation of the Majumdar-Dhar 
bijection.

In the present paper, we extend some of the $\Z^d$ results 
to more general graphs. Part of our motivation is the 
well-known fact that Pemantle's argument \cite{Pem}  
(made explicit by H\"aggstr\"om \cite{Hagg}) shows 
that the wired spanning forest measure exists on any 
infinite locally finite graph, as a limit from uniform 
spanning trees on finite graphs (see Theorem \ref{thm:Pem} 
below). Also, the alternative distinguishing a connected 
spanning forest from the disconnected case can be vastly 
generalized \cite[Theorems 9.2, 9.4]{BLPS01}. 
At first sight, the Majumdar-Dhar bijection may suggest 
that a general convergence statement should also exist 
for sandpiles and could be derived from the bijection. 
However, the situation is more subtle. 
Although the wired spanning forest measure is always
unique, the limits of sandpile measures may be non-unique.
Results of J\'arai and Lyons \cite{JL07} 
(see Theorem \ref{thm:JL} below) show that this is the
case for a class of graphs with two ends, on which the
wired spanning forest has two ends a.s.
After making appropriate assumptions to exclude the above
phenomenon, a general convergence statement can be proved 
for certain ``low-dimensional'' graphs. One can follow essentially 
the same argument as the one made in \cite{AJ04} for 
$\Z^d$, $2 \le d \le 4$. Nevertheless, we decided to 
include a new proof in the present paper, 
that follows a somewhat different
route. The argument we present is based on coupling, 
and hence gives more than weak convergence; 
see Theorem \ref{thm:AJlow} below. 

Considerably more
work is needed to extend the line of argument made in 
\cite{AJ04} for $\Z^d$, $d \ge 5$, to a more general
class of transitive graphs. Here we need to make more 
restrictive assumptions on the graph to make the proof work, 
see Assumption \ref{ass:Gcond} and Theorem \ref{thm:trans}. 
Nevertheless, parts of our argument for this case are still
quite general, and are potentially useful beyond the 
validity of Assumption \ref{ass:Gcond}; 
see Lemmas \ref{lem:cycle} and 
\ref{lem:goodblocks} and Proposition \ref{prop:fluct}.
As discussed in Section \ref{sec:limitsII}, results of 
BLPS \cite{BLPS01} and Lyons, Morris and Schramm \cite{LMS08}
imply that there are many graphs on which our assumptions 
are satisfied.

Our results make it possible to apply the general 
machinery developed by Maes, Redig and Saada \cite{MRS02} 
to a large class of graphs. This is outlined in
Section \ref{sec:conclusion}.

The bijection is also useful on infinite regular trees.
We discuss some interesting symmetry properties of the 
bijection on trees, and in Theorem \ref{thm:tree} we use
them to extend the convergence result of \cite{MRS02} to a 
general exhaustion. 

The outline of the paper is as follows.
In Section \ref{sec:model} we review the definition
and basic properties of the Abelian sandpile on
a finite undirected multigraph. Section \ref{sec:MDbij} 
is devoted to a discussion of the Majumdar-Dhar bijection 
that establishes a one-to-one mapping between recurrent
sandpile configurations and spanning trees.
In Section \ref{sec:WSF} we recall the wired spanning 
forest measure and Pemantle's alternative distinguishing
the cases $2 \le d \le 4$ and $d \ge 5$ for $\Z^d$. In 
Section \ref{sec:Wilson} we recall Wilson's method 
and its extensions to infinite graphs. 
In Section \ref{sec:limitsI}, we state and prove the
general convergence theorem for ``low-dimensional'' 
graphs. In Section \ref{sec:limitsII} we state and prove 
a convergence theorem for certain ``high-dimensional''
transitive graphs. In Section \ref{sec:limitsIII}, 
we discuss the results for regular trees. 
Finally, in Section \ref{sec:conclusion} we 
make the connection with the results of Maes, Redig
and Saada \cite{MRS02}.

\medbreak

{\bf Acknowledgments.} I thank Fran\c{c}ois Dunlop, 
Thierry Gobron and Ellen Saada for giving me the
opportunity to present this talk. I am grateful for a 
question of David B.~Wilson that prompted me to consider 
the problems in Section \ref{sec:limitsIII}.

\section{The Abelian sandpile model}
\label{sec:model}

We first define the model on a finite, connected multigraph 
$G = (V^+,E)$ that has a distinguished vertex $s$,
called the \emph{sink}. We write $V = V^+ \setminus \{ s \}$.
We allow $G$ to have loop-edges, as this has no 
major consequence.
We write $a_{xy} = a_{yx}$ for the number of edges 
between $x$ and $y$ in $G$, where $x, y \in V^+$.
Sometimes we will consider the directed graph 
$\vec{G} = (V^+, \vec{E})$ that is obtained from 
$G$ by replacing each edge by two directed edges,
one in each direction. A directed edge $e \in \vec{E}$, 
points from the \emph{tail of $e$}, denoted $e_-$, 
to the \emph{head of $e$}, denoted $e_+$.
When there is no ambiguity, we also write 
$e = [e_-,e_+]$, to specify an oriented edge by its 
tail and head. 
For a set of vertices $A \subset V^+$, we denote 
by $G \setminus A$ the graph obtained by removing
all vertices in $A$ from $G$, as well as all
edges incident with vertices in $A$.

We define the set of \emph{stable configurations} of
particles:
\eqnst
{ \Omega_G 
  := \prod_{x \in V} \{ 0, \dots, \deg_G(x)-1 \}, }
and the set of all particle configurations:
\eqnst
{ \cX_G 
  := \prod_{x \in V} \{ 0, 1, \dots \}. }
The dynamics of the model is defined in terms of
the \emph{toppling matrix}, that is the graph Laplacian:
\eqnst
{ \Delta_{xy}
  = (\Delta_G)_{xy}
  = \begin{cases}
    \deg_G(x) - a_{xx} & \text{if $x=y$;}\\
    -a_{xy}            & \text{if $x \not= y$.}\\
    0  & \text{otherwise.}
    \end{cases} \qquad x, y \in V. }
We define the basic operation of \emph{toppling}.
If $\eta \in \cX_G$ and $\eta_x \ge \deg_G(x)$, then
$x$ is allowed to \emph{topple}, which means that it
sends one particle along each edge incident with $x$.
This can be written:
\eqnst
{ \eta_y \longrightarrow \eta_y - \Delta_{xy}, \quad 
  y \in V, }
that is, the row of $\Delta_G$ corresponding to $x$ 
is subtracted from the configuration $\eta$.
Note that if $x$ was allowed to topple, 
the new configuration is also in $\cX_G$.
When $x$ is a neighbour of $s$, $a_{xs}$ particles 
are lost as the result of toppling, 
otherwise the number of particles is conserved.

We define the \emph{stabilization map}
$\cS : \cX_G \to \Omega_G$, by applying topplings
as long as possible. It can be shown that
any configuration stabilizes in finitely many 
steps, and the resulting stable configuration 
is independent of the sequence of topplings used.
This is summarized in the following lemma.

\begin{lemma}[{Dhar \cite{Dhar90}; see 
also \cite[Lemma 2.2, Lemma 2.4]{HLMPPW}}]
$\cS$ is well-defined.
\end{lemma}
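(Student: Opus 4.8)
The plan is to separate the statement into the two assertions it combines: \textbf{(a)} \emph{termination} --- for every $\eta \in \cX_G$, every legal sequence of topplings is finite, so that ``applying topplings as long as possible'' makes sense and produces a stable configuration; and \textbf{(b)} the \emph{Abelian property} --- any two maximal legal sequences started from $\eta$ topple each vertex the same number of times, and in particular reach the same element of $\Omega_G$.

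For (a), I would track the \emph{odometer}: after a finite legal sequence let $v = (v_x)_{x \in V}$ record how many times each $x$ has toppled. Since toppling $x$ subtracts the row of $\Delta_G$ corresponding to $x$ from the current configuration, and $\Delta_G$ is symmetric, the configuration reached is exactly $\eta - \Delta_G v$; and since a legal toppling keeps configurations in $\cX_G$ (the toppled vertex has height $\ge \deg_G(x) \ge \Delta_{xx}$, all others only gain), we get $\Delta_G v \le \eta$ coordinatewise. The one genuinely structural input is that $\Delta_G$ --- the graph Laplacian of $G$ with the sink row and column removed --- is invertible with $\Delta_G^{-1} \ge 0$ entrywise; this is precisely where connectedness of $G$ and the presence of the sink $s$ enter, and it follows either from the Green-function representation of $\Delta_G^{-1}$ (random walk on $G$ killed at $s$) or from recognising $\Delta_G$ as a nonsingular M-matrix. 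Hence $v = \Delta_G^{-1}(\Delta_G v) \le \Delta_G^{-1}\eta =: M$, a bound depending only on $\eta$ and $G$. Consequently every legal toppling sequence has length at most $\sum_{x \in V} M_x < \infty$; in particular no infinite legal sequence exists, and any sequence that cannot be prolonged ends at a configuration in which no vertex can topple, i.e.\ in $\Omega_G$.

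For (b), the basic tool is the local commutation (``diamond'') step: if at a configuration $\zeta$ two \emph{distinct} vertices $x$ and $y$ may each be legally toppled, then toppling $x$ then $y$, and toppling $y$ then $x$, are both legal and produce the same configuration (namely $\zeta$ with the rows of $\Delta_G$ at $x$ and at $y$ both subtracted) --- legality of the second step is automatic because a vertex's height only increases when a different vertex topples. Given termination from part (a), I would conclude by induction on the length of a maximal legal sequence from $\eta$: if $\eta$ is not stable, pick the first toppled vertex $x_1$ of one maximal sequence; in \emph{any} maximal sequence $x_1$ must be toppled, since its height stays $\ge \deg_G(x_1)$ until it first topples while the terminal configuration is stable; using the diamond step repeatedly, commute that first toppling of $x_1$ to the front of the second sequence without changing its terminal configuration; now both sequences begin with $x_1$, so apply the inductive hypothesis to the configuration obtained from $\eta$ by one toppling of $x_1$, with two sequences of length one less. (Alternatively, part (a) together with the diamond step put us exactly in the setting of Newman's lemma for terminating, locally confluent rewriting systems, which yields unique normal forms at once.) This shows $\cS(\eta)$ and the full odometer are independent of all choices, so $\cS$ is well defined.

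The main obstacle is really the positivity $\Delta_G^{-1} \ge 0$, together with the bookkeeping needed to treat loop-edges consistently in the definitions of $\deg_G$ and $\Delta_G$; once these are granted, the termination bound is immediate and the Abelian property is a routine commutation argument. Since the lemma is classical, I would keep the exposition short and refer to \cite{Dhar90} and \cite[Lemma 2.2, Lemma 2.4]{HLMPPW} for the remaining details.
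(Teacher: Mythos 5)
The paper does not prove this lemma at all: it is stated as a classical fact with references to Dhar \cite{Dhar90} and \cite[Lemmas 2.2, 2.4]{HLMPPW}, so there is no internal proof to compare against. Your argument is correct and is essentially the classical one: the decomposition into termination plus the commutation (diamond) step, and the induction showing that the first toppled vertex of one maximal sequence must occur and can be commuted to the front of any other, is exactly the standard route to the Abelian property used in the cited sources. The only point where you genuinely diverge from the usual presentation is termination: \cite{HLMPPW} argue by particle counting (if some vertex toppled infinitely often, then by connectedness a neighbour of the sink would topple infinitely often, losing infinitely many particles, contradicting the finite, non-increasing total mass), which avoids any linear algebra; your odometer argument instead uses that the Dirichlet Laplacian $\Delta_G$ is a nonsingular M-matrix with $\Delta_G^{-1} \ge 0$ (the Green function of the walk killed at $s$), and it buys a little more, namely the quantitative least-action-type bound $v \le \Delta_G^{-1}\eta$ on the odometer. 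Both inputs are standard, your deduction $\Delta_G v \le \eta \Rightarrow v \le \Delta_G^{-1}\eta$ is valid precisely because $\Delta_G^{-1}$ is entrywise nonnegative, and the loop-edge bookkeeping ($\Delta_{xx} = \deg_G(x) - a_{xx}$, so a toppled vertex retains height $\ge a_{xx} \ge 0$) is handled consistently with the paper's conventions, so I see no gap.
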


We define the \emph{addition operators}
$a_x : \Omega_G \to \Omega_G$ by
$a_x \eta := \cS ( \eta + \delta_{x,\cdot} )$, $x \in V$, where
$\delta_{x,y} = 1$ if $y = x$ and $=0$ when $y \not= x$.
The addition operators satisfy the \emph{Abelian property}:
$a_x a_y = a_y a_x$, $x,y \in V$ \cite{Dhar90}; see also
\cite[Lemma 2.5]{HLMPPW}. 
Let $\{ p(x) \}_{x \in V}$ be a distribution on $V$
satisfying $p(x) > 0$, $x \in V$. 
We define a Markov chain with statespace $\Om_G$, 
where a single step
consists of picking a vertex $x \in V$ at random, 
according to the distribution $p$, 
and applying $a_x$ to the configuration.
The set of \emph{recurrent configurations} $\cR_G$
is the set of recurrent states of the Markov chain.
The \emph{Sandpile Group} of $G$ is defined as
\eqnst
{ K_G 
  := \Z^{V} / \Z^{V} \Delta_G, }
that is, $\Z^{V}$ factored by the integer row span
of $\Delta_G$. 

\begin{theorem}[{Dhar \cite{Dhar90}; 
see also \cite[Corrolary 2.16]{HLMPPW}}]\ \\
(i) The restriction of the map $a_x$ to $\cR_G$ is a 
one-to-one transformation of $\cR_G$ onto itself,
for each $x \in V$. These restricted maps 
generate an Abelian group isomorphic to $K_G$.\\
(ii) $|\cR_G| = |K_G| = \det(\Delta_G)$.\\
(iii) The Markov chain has a unique stationary 
distribution $\nu_G$ and this is the uniform 
distribution on $\cR_G$.
\end{theorem}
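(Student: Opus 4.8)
The plan is to organize everything around a single set,
\eqnst{ R := \bigl\{\, \cS(\eta_{\max} + \xi) \ : \ \xi \in \cX_G \,\bigr\} \subseteq \Omega_G , }
where $\eta_{\max} \in \Omega_G$ is the pointwise-maximal stable configuration, $\eta_{\max}(x) = \deg_G(x) - 1$, and to show both that $R = \cR_G$ and that $K_G$ acts on $R$ freely and transitively; parts (i)--(iii) then follow quickly. Two elementary facts will be used throughout: $\cS(\cS(\alpha)+\beta) = \cS(\alpha+\beta)$ for $\alpha,\beta \in \cX_G$ (immediate from the order-independence of stabilization), whence $a_x\cS(\alpha) = \cS(\alpha+\delta_{x,\cdot})$; and the fact that stabilizing a configuration $\gamma$ alters it by $-u\Delta_G$, where $u \in \Z_{\ge 0}^V$ counts the topplings at each site. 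As a first step I would note that $R$ is invariant under every $a_x$, since $a_x\cS(\eta_{\max}+\xi) = \cS(\eta_{\max}+\xi+\delta_{x,\cdot}) \in R$, so $R$ is closed for the chain; and that the chain reaches $R$ from any $\eta \in \Omega_G$, since (every $p(x) > 0$) it can produce $\cS(\eta+M\bone)$, which lies in $R$ once $M$ is large enough that $\eta + M\bone \ge \eta_{\max}$. Hence no state outside $R$ is recurrent, and to get $R = \cR_G$ it remains to see $R$ is a single communicating class --- which I will deduce below, once each $a_x$ is known to act on $R$ as a bijection (then from $\rho\in R$ one reaches $a_x^{-1}\rho = a_x^{\,k-1}\rho$, $k$ being the order of $a_x|_R$, and a transitive $K_G$-action then reaches all of $R$).

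Next I would introduce the action of $K_G = \Z^V/\Z^V\Delta_G$ on $R$ given by $[\xi]\cdot\rho := \cS(\rho+\xi)$. The group-action identities $[0]\cdot\rho = \rho$ and $[\xi]\cdot([\xi']\cdot\rho) = [\xi+\xi']\cdot\rho$ are immediate from the first elementary fact above. The action is transitive because $\cS(\eta_{\max}) = \eta_{\max}$, so $\rho = \cS(\eta_{\max}+\xi) = [\xi]\cdot\eta_{\max}$ for every $\rho\in R$; and it is free because $[\xi]\mapsto[\xi]\cdot\eta_{\max}$ is injective: $\cS(\eta_{\max}+\xi) = \cS(\eta_{\max}+\xi')$ forces $\eta_{\max}+\xi - u\Delta_G = \eta_{\max}+\xi'-u'\Delta_G$ (comparing the stabilizations via their odometers), hence $\xi - \xi' = (u-u')\Delta_G$ and $[\xi] = [\xi']$. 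The one point requiring genuine work is that the action is \emph{well-defined}: if $[\xi] = [\xi']$ and $\rho = \cS(\eta_{\max}+\zeta)\in R$, I must check $\cS(\eta_{\max}+\zeta+\xi) = \cS(\eta_{\max}+\zeta+\xi')$, and since the two arguments differ by an element $v\Delta_G$ of $\Z^V\Delta_G$ this reduces to showing that adding an arbitrary integer combination of rows of $\Delta_G$ to a configuration dominating $\eta_{\max}$ leaves the stabilization unchanged; since $v$ may have negative entries, one writes $v = v_+ - v_-$, adds rows one at a time while re-toppling, and verifies that a sufficiently full configuration never leaves $\cX_G$ in the process (inspecting separately the cases of loop-edges and of vertices of small degree). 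Granting this, $[\xi]\mapsto[\xi]\cdot\eta_{\max}$ is a bijection $K_G \to R$ (surjective by the definition of $R$, injective as above); in particular each $a_x$ acts on $R$ as $\rho\mapsto[\delta_{x,\cdot}]\cdot\rho$, a bijection --- which supplies the missing point in the previous paragraph, so that $R = \cR_G$ and this is the chain's unique recurrent class.

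The three assertions now follow. For (i): the maps $a_x|_{\cR_G}$ are bijections of $\cR_G$, and they generate the image of the $K_G$-action in $\mathrm{Sym}(\cR_G)$, which is isomorphic to $K_G$ since the action is free, hence faithful. For (ii): $|\cR_G| = |R| = |K_G| = \det\Delta_G$; the last equality follows from the Smith normal form of $\Delta_G$ once $\det\Delta_G \ne 0$, and in fact $\det\Delta_G > 0$ because $\Delta_G$ is the reduced Laplacian of the connected graph $G$, hence positive definite ($\sum_{u,w\in V} x_u(\Delta_G)_{uw}x_w = \sum_{\{u,w\}\in E}(\tilde x_u - \tilde x_w)^2$, with $\tilde x$ extending $x$ by $\tilde x_s = 0$, which vanishes only for $x = 0$). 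For (iii): a finite Markov chain with a unique recurrent class has a unique stationary distribution, supported on that class, $\cR_G$; it is uniform because the transition matrix restricted to $\cR_G$ is doubly stochastic --- its column sum at $\rho'$ equals $\sum_x p(x)\,|\{\rho\in\cR_G : a_x\rho = \rho'\}| = \sum_x p(x) = 1$, using that each $a_x|_{\cR_G}$ is a bijection.

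The main obstacle is the well-definedness of the $K_G$-action, that is, the claim that adding a sign-indefinite element of $\Z^V\Delta_G$ to a full configuration does not change its stabilization: this is the only step that is not essentially bookkeeping with the Abelian property, elementary finite-Markov-chain theory, and integer linear algebra, and it is precisely where one must control the intermediate ``un-topplings'' so that no coordinate is ever driven below zero.
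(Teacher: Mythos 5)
The paper itself does not prove this theorem: it quotes it from Dhar and from HLMPPW, so there is no internal proof to compare with; your architecture (recurrent states as stabilizations of configurations dominating $\eta_{\max}$, a free and transitive action of $K_G$, uniqueness and uniformity of $\nu_G$ via a doubly stochastic restricted transition matrix, $|K_G|=\det\Delta_G$ via Smith normal form and positive definiteness) is essentially the route of the cited HLMPPW reference, and those surrounding steps are fine as written.

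However, the step you yourself flag as "the one point requiring genuine work" is a genuine gap, and the method you sketch for it does not work as stated. You reduce well-definedness to: adding an integer combination $v\Delta_G$ of rows to a configuration dominating $\eta_{\max}$ does not change its stabilization, to be proved by "adding rows one at a time while re-toppling" and checking the configuration never leaves $\cX_G$. First, a single row addition can already leave $\cX_G$ even from $\eta_{\max}$: if $b$ is a leaf attached to $a$ and $a$ to the sink, then $\eta_{\max}=(1,0)$ and adding the row of $a$, namely $(2,-1)$, gives $(3,-1)$. Second, and more seriously, when $v$ has entries of both signs there is no obvious ordering of the $+$ and $-$ row operations that keeps the intermediate configurations simultaneously nonnegative and "full", and fullness cannot be dropped: on the graph with two vertices each joined to the other and to the sink, $(0,0)$ and $(1,1)$ differ by $(1,1)\Delta_G$ yet are distinct stable configurations, so if an intermediate configuration ever falls out of the full regime the conclusion can genuinely fail. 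Thus "verify that a sufficiently full configuration never leaves $\cX_G$" is not a verification but the missing idea. The standard repairs are: (a) Dhar's argument --- since $\Delta_G^{-1}$ has strictly positive entries, choose an integer row vector $m\ge\bone$ with $m\Delta_G\ge\bone$ entrywise; prove that for $\gamma\ge\eta_{\max}$ and $w\ge 0$ with $\gamma+w\Delta_G\in\cX_G$ one has $\cS(\gamma+w\Delta_G)=\cS(\gamma)$ (by showing each $x$ topples at least $w_x$ times, so the added rows cancel against legal topplings), and then dispose of mixed-sign $v$ by replacing it with $v+Nm\ge 0$ for large $N$, applying the nonnegative case on both sides; or (b) prove instead that each class of $\Z^V$ mod $\Z^V\Delta_G$ contains at most one recurrent configuration via the burning test (allowed $=$ recurrent, i.e.\ the route of Section 3 of the paper or HLMPPW Lemma 4.2). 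Note also that the same vector $m$ with $m\Delta_G\ge\bone$ is what guarantees that every class of $K_G$ has a nonnegative representative, which you implicitly use both to define the action of all of $K_G$ (hence the homomorphism $K_G\to\mathrm{Sym}(\cR_G)$ in (i)) and to pass from transitivity of the group action to irreducibility of the Markov chain on $R$.
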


By the Matrix-Tree Theorem \cite[Theorem II.12]{Bollobas},
$\det(\Delta_G)$ also equals the number of spanning trees of
$G$. Let us write $\cT_G$ for the set of all spanning trees
of $G$. It is natural to ask for an explicit bijection between
$\cR_G$ and $\cT_G$, and such a bijection is discussed
in Section \ref{sec:MDbij}. See \cite{HLMPPW} for 
a different class of bijections, based on the
rotor-router walk.

\section{The Majumdar-Dhar bijection}
\label{sec:MDbij}

In this section we describe our main tool for
studying infinite volume limits of sandpiles.
Let $G = (V^+,E)$ be a finite, connected multigraph, 
and $s$ the distinguished vertex (the sink). 
Recall that $\nu_G$ is the stationary distribution,
$\cR_G$ is the set of recurrent configurations, and
$\cT_G$ is the set of spanning trees of $G$.
We describe a bijection between $\cR_G$ and $\cT_G$
that was introduced by Majumdar and Dhar \cite{MD92}.

\subsection{Allowed configurations}
\label{ssec:allowed}

For a subset $F \subset V$ and $x \in F$, we write 
$\deg_F(x) = \sum_{y \in F} a_{yx}$, which is the degree
of $x$ in the subgraph induced by $F$. We write $\eta_F$ for
the restriction of the configuration $\eta$ to the subset $F$.
We say that $\eta_F$ is a \emph{forbidden subconfiguration (FSC)} 
if for all $x \in F$, $\eta_x < \deg_F(x)$. 
We say that $\eta \in \Omega_G$ is 
\emph{allowed}, if there is no $F \subset V$, $F \not= \es$,
such that $\eta_F$ is a FSC. Let us write $\cA_G$ for the 
set of allowed configurations. In Section \ref{ssec:burn}
we review Dhar's Buring Algorithm that decides if a 
given configuration is allowed or not.

It was proved in \cite{Dhar90} that $\cR_G \subset \cA_G$. 
It was proved in \cite{MD92}, with the introduction of the 
bijection in Section \ref{ssec:bijection}, 
that $|\cA_G| = |\cT_G|$. Hence it follows that 
$|\cR_G| = \det(\Delta_G) = |\cT_G| = |\cA_G|$, and 
therefore $\cR_G = \cA_G$. See \cite[Lemma 4.2]{HLMPPW}
for a different proof of the latter fact, that is still 
based on the Burning Algorithm, but does not require 
the bijection.

\begin{lemma}[{Dhar \cite{Dhar90}; Majumdar, Dhar \cite{MD92}; 
see also \cite[Lemma 4.2]{HLMPPW}}]
Suppose that $G$ is a connected multigraph with a sink $s$
specified. Then $\cR_G = \cA_G$.
\end{lemma}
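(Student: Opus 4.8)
\medskip
\noindent\textit{Plan of proof.}\quad The plan is to combine one easy inclusion with a cardinality count, so that the harder inclusion never has to be analyzed directly. First I would use that $\cR_G \subseteq \cA_G$, which is due to Dhar \cite{Dhar90} (see the discussion above); its short proof is recalled below. Second, I would appeal to the bijection constructed in Section~\ref{ssec:bijection}: it assigns to each $\eta \in \cA_G$ a spanning tree of $G$ and is one-to-one onto $\cT_G$, so that $|\cA_G| = |\cT_G|$. Chaining this with part~(ii) of Dhar's theorem above and the Matrix-Tree theorem \cite[Theorem~II.12]{Bollobas} gives
\[
  |\cR_G| = \det(\Delta_G) = |\cT_G| = |\cA_G| < \infty .
\]
Since $\cR_G \subseteq \cA_G$ and the two finite sets have the same cardinality, $\cR_G = \cA_G$.

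For $\cR_G \subseteq \cA_G$ I would argue via Dhar's burning test. Let $\eta \in \cR_G$ and put $\beta_x := a_{xs}$ for $x \in V$. Using part~(i) of Dhar's theorem above, the least-action principle, and the injectivity on $\cR_G$ of the natural map $\cR_G \to K_G$, one checks that stabilizing $\eta + \beta$ returns $\eta$ and topples every site of $V$ exactly once. Let $x_1, \dots, x_{|V|}$ be the order in which the sites topple, and let $\es \neq F \subseteq V$; let $x_i$ be the element of $F$ that topples first. At the moment $x_i$ topples it has received extra particles only from the sink and from $x_1, \dots, x_{i-1}$, none of the latter lying in $F$, and as its height has then reached $\deg_G(x_i)$,
\[
  \eta_{x_i} \ge \deg_G(x_i) - a_{x_i s} - \sum_{j<i} a_{x_i x_j}
            = \deg_{\{x_i,\dots,x_{|V|}\}}(x_i) \ge \deg_F(x_i),
\]
the last step because $F \subseteq \{x_i,\dots,x_{|V|}\}$. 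Hence $\eta_F$ is not a forbidden subconfiguration, and as $F$ was arbitrary, $\eta \in \cA_G$.

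The only nontrivial input is therefore that the burning-algorithm map is a bijection from $\cA_G$ onto $\cT_G$, which is precisely what Section~\ref{ssec:bijection} will establish; I expect this to be the main obstacle, since both injectivity and surjectivity require following in detail how the burning procedure reads off a spanning tree. An alternative that avoids the bijection (and the cardinality count) is to prove $\cA_G \subseteq \cR_G$ directly: one shows that $\eta$ has no forbidden subconfiguration if and only if it passes the burning test --- the ``only if'' direction coming from taking $F$ to be the set of sites that fail to topple when $\eta + \beta$ is stabilized, which forces $\eta_y < \deg_F(y)$ for every $y \in F$ --- and that passing the burning test characterizes membership in $\cR_G$; this is the route taken in \cite[Lemma~4.2]{HLMPPW}.
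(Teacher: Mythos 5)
Your proposal follows essentially the same route as the paper: the inclusion $\cR_G \subseteq \cA_G$ from Dhar, the equality $|\cA_G| = |\cT_G|$ supplied by the bijection of Section~\ref{ssec:bijection}, and the chain $|\cR_G| = \det(\Delta_G) = |\cT_G| = |\cA_G|$ via Dhar's theorem and the Matrix-Tree theorem, whence the finite sets coincide. Your added burning-test sketch of $\cR_G \subseteq \cA_G$ (toppling each site exactly once after adding $\beta_x = a_{xs}$) is correct and merely fills in a step the paper cites to \cite{Dhar90} without proof.
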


\subsection{The Burning Algorithm}
\label{ssec:burn}

The following algorithm, introduced by Dhar \cite{Dhar90}, 
checks if a configuration is allowed.
Let $\eta \in \Omega_G$.
Set $B(0) := \{ s \}$, and $U(0) = V$. 
For $i \ge 1$, we inductively define
\eqnsplst
{ B(i)
  &:= \{ x \in U(i-1) :
     \eta_x \ge \deg_{U(i-1)}(x) \} \\
  U(i)
  &:= U(i-1) \setminus B(i)
  = V \setminus \left( \cup_{j=0}^i B(j) \right). }
We call $B(i)$ the \emph{set of vertices burning 
at time $i$}, and $U(i)$ the \emph{set of 
unburnt vertices at time $i$}.
We say that the algorithm \emph{terminates}, if for some
$i \ge 1$ we have $U(i) = \es$. It is easy to prove by induction
on $i$ that for all $i \ge 1$, no vertex in $B(i)$ 
can be part of any FSC. It also follows from the 
definition of $B(i)$ that if for some $i \ge 1$
we have $B(i) = \es$ and $U(i-1) \not= \es$ 
(i.e.~the algorithm does not terminate), 
then $\eta_{U(i-1)}$ is an FSC. Hence the algorithm 
terminates if and only if $\eta$ is allowed.

This algorithm can be generalized to Eulerian
digraphs with a sink; see \cite[Lemma 4.1]{HLMPPW}
The algorithm does not work on general directed
graphs. An extension to that case, called
the \emph{script algorithm}, was given by
Speer \cite{Speer}.

\subsection{The bijection}
\label{ssec:bijection}

Based on the Burning Algorithm, we now give the 
bijection between $\cA_G$ and $\cT_G$. 
The bijection is not canonical, in the sense that 
some choices can be made how to set it up. Suppose that 
for every $x \in V$, every non-empty
$P \subset \{ e \in \vec{E} : e_- = x \}$
and every finite $K \subset \{0, 1, 2, \dots, \deg_G(x)-1 \}$ 
of the form $K  = \{ j, j + 1, \dots, j + |P|-1 \}$, an arbitrary 
bijection $\alpha_{P,K} : P \to K$ is fixed. Then the
bijection between $\cA_G$ and $\cT_G$ will be uniquely 
defined in terms of the $\alpha_{P,K}$'s.

We define the map $\sigma_G : \cA_G \to \cT_G$. Let $\eta \in \cA_G$,
and consider the sets $\{ B(i) \}_{i \ge 0}$ defined in Section \ref{ssec:burn}.
By the definition of the Burning Algorithm we have $V = \cup_{i \ge 1} B(i)$, 
and this is a disjoint union. We build the tree $t = \sigma_G(\eta)$
by connecting a vertex $x \in B(i)$, $i \ge 1$ to some vertex in $B(i-1)$.
This ensures that there are no loops, and since $V = \cup_{i \ge 1} B(i)$,
$t$ will be a spanning tree of $G$.
Suppose then that $x \in B(i)$ for some $i \ge 1$. 
Let 
\eqnspl{e:data}
{ n_x 
  &:= \sum_{y \in \cup_{j=0}^{i-1} B(j)} a_{yx} \\
  P_x 
  &:= \{ e \in \vec{E} : e_+ \in B(i-1),\, e_- = x \} \\
  K_x
  &= \{ \deg_G(x) - n_x, \dots, \deg_G(x) - n_x + |P_x|-1 \}. }
We claim that $\eta_x \in K_x$. For this, note that 
due to $x \in B(i)$ we have
\eqnst
{ \eta_x 
  \ge \deg_{U(i-1)}(x) 
  = \sum_{y \in U(i-1)} a_{yx}
  = \deg_G(x) - \sum_{y \in \cup_{j = 0}^{i-1} B(j)} a_{yx}
  = \deg_G(x) - n_x. }
On the other hand, we have 
$|P_x| = \sum_{y \in B(i-1)} a_{xy} = \sum_{y \in B(i-1)} a_{yx}$,
and since $x \not\in B(i-1)$, for $i \ge 2$ we have
\eqn{e:inK}
{ \eta_x
  < \deg_{U(i-2)}(x)
  = \deg_G(x) - \sum_{y \in \cup_{j=0}^{i-2} B(j)} a_{yx}
  = \deg_G(x) - n_x + |P_x|. }
When $i = 1$, we have $n_x = |P_x|$, so we still have 
$\eta_x < \deg_G(x) - n_x + |P_x|$.
This shows that indeed $\eta_x \in K_x$. 
It follows that the edge $e_x := \alpha_{P_x,K_x}^{-1}(\eta_x)$
is an edge pointing from $x$ to a vertex in $B(i-1)$.
If we define 
\eqnst
{ \sigma_G(\eta)
  := t 
  := \{ e_x : x \in V \}, }
then $t$ is a spanning tree of $G$ with each edge 
directed towards $s$, or equivalently, disregrading
the orientedness, a spanning tree of $G$.

\begin{lemma}[Majumdar, Dhar \cite{MD92}]
\label{lem:bijection}
The map $\sigma_G : \cA_G \to \cT_G$ is a bijection
between these sets.
\end{lemma}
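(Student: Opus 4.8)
The plan is to prove that $\sigma_G$ is a bijection by exhibiting an explicit two-sided inverse, which is natural because the Burning Algorithm can be ``reconstructed'' from a spanning tree. Given a spanning tree $t \in \cT_G$ with all edges oriented towards $s$, I would first recover the burning layers intrinsically from $t$: define $\tilde B(0) = \{s\}$ and, inductively, let $\tilde B(i)$ be the set of vertices $x$ whose unique out-edge $e_x$ in $t$ satisfies $(e_x)_+ \in \tilde B(i-1)$. Since $t$ is a spanning tree directed towards $s$, every vertex lies in exactly one $\tilde B(i)$, and these layers depend only on $t$. For $x \in \tilde B(i)$, I can then form $n_x$, $P_x$, $K_x$ exactly as in \eqref{e:data} (these now depend only on $t$, not on any configuration), observe that $e_x \in P_x$, and set $\eta_x := \alpha_{P_x, K_x}(e_x) \in K_x$. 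This defines a map $\tau_G : \cT_G \to \prod_{x \in V}\{0,\dots,\deg_G(x)-1\}$.

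The key steps are then: (1) show $\tau_G(t) \in \cA_G$, i.e.\ that the configuration produced is allowed; (2) show $\sigma_G \circ \tau_G = \mathrm{id}_{\cT_G}$; and (3) show $\tau_G \circ \sigma_G = \mathrm{id}_{\cA_G}$. For step (1), the cleanest route is to check that when one runs the Burning Algorithm on $\eta = \tau_G(t)$, the resulting burning layers $B(i)$ coincide with the $\tilde B(i)$ built from $t$; since the algorithm then terminates (every vertex is eventually burnt because $t$ spans $V$), $\eta$ is allowed by the characterization in Section~\ref{ssec:burn}. This coincidence is proved by induction on $i$: assuming $B(j) = \tilde B(j)$ for $j < i$, one has $U(i-1) = V \setminus \cup_{j<i}\tilde B(j)$, and for $x \in \tilde B(i)$ the inequality $\eta_x \ge \deg_G(x) - n_x = \deg_{U(i-1)}(x)$ shows $x \in B(i)$, while for $x \in \tilde B(i')$ with $i' > i$ the upper bound $\eta_x \in K_x$ (with $K_x$ computed at level $i'$) forces $\eta_x < \deg_{U(i-1)}(x)$, so $x \notin B(i)$. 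Once $B(i) = \tilde B(i)$ is established for all $i$, steps (2) and (3) are essentially bookkeeping: the data $(n_x, P_x, K_x)$ used by $\sigma_G$ on $\tau_G(t)$ match those used by $\tau_G$ on $t$, and the maps $\alpha_{P,K}$ and $\alpha_{P,K}^{-1}$ undo each other; similarly, running $\tau_G$ on $\sigma_G(\eta)$ recovers the same layers (this is exactly the content of the burning-layer identities already verified in the construction of $\sigma_G$) and hence recovers $\eta$.

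I expect the main obstacle to be step (1): carefully verifying that the burning layers of $\tau_G(t)$ agree with the tree-layers $\tilde B(i)$, and in particular checking the two-sided estimates on $\eta_x$ at the correct level. The subtlety is that $K_x$ for a vertex $x \in \tilde B(i')$ is an interval of length $|P_x|$ sitting just below $\deg_G(x) - n_x + |P_x|$, where $n_x$ counts edges from layers $0,\dots,i'-1$; one must confirm that for $j < i'$ the quantity $\deg_{U(j-1)}(x)$ strictly exceeds $\eta_x$ and that $\deg_{U(i'-1)}(x) \le \eta_x$, which amounts to re-running the chain of inequalities around \eqref{e:inK} and \eqref{e:data} with the roles of ``known'' and ``unknown'' reversed. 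The $i=1$ boundary case (where $n_x = |P_x|$) needs separate attention, exactly as in the forward construction. Everything else is formal, since a spanning tree oriented towards the sink is precisely the combinatorial object encoding a valid run of the Burning Algorithm.
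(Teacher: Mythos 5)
Your proposal is correct and takes essentially the same route as the paper: your $\tau_G$ is exactly the paper's inverse map $\phi_G$ (your layer definition $\tilde B(i)$ agrees with \eqref{e:B's}), and the central induction showing that the burning layers of $\tau_G(t)$ coincide with the tree layers is precisely the paper's main argument for surjectivity. The only cosmetic difference is that you deduce injectivity from $\tau_G\circ\sigma_G=\mathrm{id}_{\cA_G}$ (an observation the paper also records when setting up $\phi_G$), whereas the paper gives a separate first-index-of-disagreement argument; both are valid.
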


\begin{proof}
We first show that $\sigma_G$ is injective. Let 
$\eta^1, \eta^2 \in \cA_G$, $\eta^1 \not= \eta^2$, and
let $t^1 := \sigma_G(\eta^1)$, $t^2 := \sigma_G(\eta^2)$.
Let $i \ge 1$ be the smallest index such that either 
$B(i,\eta^1) \not= B(i,\eta^2)$ or there exists 
$x \in B(i,\eta^1) = B(i,\eta^2)$ with $\eta^1_x \not= \eta^2_x$.
If such index did not exist, we would get by induction
on $i$ that $\eta^1 = \eta^2$ on 
$\cup_{i \ge 1} B(i,\eta^1) = \cup_{i \ge 1} B(i,\eta^2) = V$,
a contradiction. By the choice of $i$, we have
\eqn{e:equality}
{ \text{$B(j,\eta^1) = B(j,\eta^2)$ for $1 \le j \le i-1$.} }
If $B(i,\eta^1) \not= B(i,\eta^2)$, then pick a vertex
$x$ in the symmetric difference. Then by the construction 
of $\sigma_G$, in one of $t^1$ and $t^2$ there is an edge
from $x$ to $B(i-1,\eta^1) = B(i-1,\eta^2)$ 
and there is no such edge in the other, so 
$t^1 \not= t^2$. Suppose therefore
that $B(i,\eta^1) = B(i,\eta^2)$, but there exists
$x \in B(i,\eta^1) = B(i,\eta^2)$ such that $\eta^1_x \not= \eta^2_x$.
By the equality \eqref{e:equality}, we have
$n_{x}(\eta^1) = n_{x}(\eta^2)$, 
$P_{x}(\eta^1) = P_{x}(\eta^2)$, and hence also
$K_{x}(\eta^1) = K_{x}(\eta^2)$. 
However, since $\eta^1_x \not= \eta^2_x$ we have
$\alpha_{P_{x},K_{x}}^{-1}(\eta^1_x) \not= 
\alpha_{P_{x},K_{x}}^{-1}(\eta^2_x)$, and
therefore the edge between $x$ and $B(i-1)$ 
is different in $t^1$ and $t^2$.
This completes the proof of injectivity.

We now show that $\sigma_G$ is surjective. In the course
of doing so, we find the inverse map 
$\sigma_G^{-1} =: \phi_G : \cT_G \to \cA_G$.
First we note that for any $\eta \in \cA_G$, 
the sets $B(0), B(1), \dots$ and the data in \eqref{e:data} 
can be easily expressed in terms of $t = \sigma_G(\eta)$ 
as well. Namely, let $d_t(\cdot,\cdot)$ denote graph 
distance in the tree $t$. Then due to the construction 
of $t$, we have
\eqnspl{e:B's}
{ B(0) 
  &= \{ s \}; \\
  B(i) 
  &= \{ x \in V : d_t(s, x) = i \}, 
    \qquad i \ge 1. }
Since this expresses $B(0), B(1), \dots$ in terms 
of $t$, the formulas \eqref{e:data} show that 
$n_{x}$, $P_{x}$ and $K_{x}$ are also 
expressed in terms of $t$.
Also, by the definition of $\sigma_G$, the unique
edge of $t$ in $P_{x}$ is $e_x$, hence
we have $\eta_x = \alpha_{P_{x}, K_{x}} (e_x)$.

The above makes it clear what the inverse $\phi_G = \sigma_G^{-1}$ 
has to be. Suppose that $t \in \cT_G$ is given.
We use \eqref{e:B's} to \emph{define} the $B(i)$'s and 
for $x \in B_i$, $i \ge 1$, we use \eqref{e:data} 
as the definition of $n_{x}$, $P_{x}$ and $K_{x}$.
It is immediate from these definitions that 
$P_x$ is non-empty, and $t$ has a unique edge
in $P_x$. Therefore, for $x \in B(i)$, $i \ge 1$ 
we let $e_x$ be the unique edge of $t$ in $P_{x}$, 
and we set $\eta_x = \alpha_{P_{x}, K_{x}} (e_x)$.
We define $\phi_G(t) := \eta$. It is clear that 
if $\eta \in \cA_G$, then $\sigma_G(\phi_G(t)) = t$.
What is left to show is that we always have 
$\eta \in \cA_G$.

We prove that for every $t \in \cT_G$ we have
$\eta = \phi_G(t) \in \cA_G$, 
by applying the Burning Test to $\eta$.
By definition, $B(0) = \{ s \}$. We also
set $U(0) = V$, and recursively, 
$U(i) := U(i-1) \setminus B(i)$ for $i \ge 1$.
We show by induction on $i$ that at time
$i \ge 0$ precisely $B(i)$ burns.

We know that at time $0$, $B(0)$ and $U(0)$ are 
the set of burning and unburnt sites.
Suppose inductively that $i \ge 1$ and we already 
know that at time $0 \le j \le i-1$ exactly 
$B(j)$ burns, and hence $U(i-1)$ is the set 
of unburnt sites at time $i-1$. We show that 
at time $i$, precisely $B(i)$ burns.

Let $x \in B(i)$. Then due to the 
inductive hypothesis and the definition of
$n_{x}$, we have
\eqn{e:degree}
{ \deg_{U(i-1)}(x)
  = \sum_{y \in U(i-1)} a_{yx}
  = \sum_{y \in V^+ \setminus \cup_{j=0}^{i-1} B(j)} a_{yx}
  = \deg_G(x) - n_x. }
Since $\eta_x \in K_{x}$ (by the definition of 
$\eta = \phi_G(t)$), we have 
$\eta_x \ge \deg_G(x) - n_x$. Hence due to
\eqref{e:degree}, $x$ burns at time $i$. 

Let now $x \in B(j)$ with $j \ge i+1$. Then by the 
induction hypothesis, $B(j-1), B(j), \dots$ are unburnt 
at time $i-1$, and hence 
\eqnst
{ \deg_{U(i-1)}(x) 
  \ge \sum_{y \in \cup_{k \ge j-1} B(k)} a_{yx}
  = \deg_G(x) - \sum_{y \in \cup_{0 \le k \le j-2} B(k)} a_{yx} 
  = \deg_G(x) - n_x + |P_x|. }
Since $\eta_x \in K_{x}$, we have 
$\eta_x < \deg_G(x) - n_x + |P_x|$, and therefore
$x$ does not burn at time $i$. This shows that 
at time $i$ precisely the set $B(i)$ burns, and
completes the induction. Therefore $\eta$ is allowed,
and we have shown that $\sigma_G$ is a bijection 
between $\cA_G$ and $\cT_G$.
\end{proof}

We define the \emph{uniform spanning tree measure}
$\mu_G$ as the probability measure on $\cT_G$ 
that assigns each $t \in \cT_G$ equal weight.
Lemma \ref{lem:bijection} has the following
important corollary.

\begin{corollary}
\label{cor:uniform}
The stationary measure $\nu_G$ of the Abelian sandpile
on $G$ is the image under $\phi_G$ of the uniform
spanning tree measure $\mu_G$.
\end{corollary}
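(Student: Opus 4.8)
The plan is to combine three facts already in hand: that $\nu_G$ is the uniform probability measure on $\cR_G$ (Dhar's theorem, part (iii)); that $\cR_G = \cA_G$ (the lemma of Section \ref{ssec:allowed}); and that $\sigma_G : \cA_G \to \cT_G$ is a bijection with inverse $\phi_G$ (Lemma \ref{lem:bijection}). The statement is then essentially the remark that a bijection between finite sets intertwines the uniform measures on them.

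First I would record the purely set-theoretic observation: if $f : X \to Y$ is a bijection between finite sets and $\lambda$ is the uniform probability measure on $Y$, then $f_* \lambda$ is the uniform probability measure on $X$, since for each $x \in X$ one has $(f_*\lambda)(\{x\}) = \lambda(f^{-1}(\{x\})) = \lambda(\{f(x)\}) = 1/|Y|$, independent of $x$. Applying this with $X = \cA_G$, $Y = \cT_G$, $f = \phi_G = \sigma_G^{-1}$, and $\lambda = \mu_G$, we get that $(\phi_G)_* \mu_G$ assigns mass $1/|\cT_G|$ to every $\eta \in \cA_G$.

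It remains only to identify this with $\nu_G$. Using $\cR_G = \cA_G$ and $|\cR_G| = |\cA_G| = |\cT_G| = \det(\Delta_G)$, the measure $(\phi_G)_* \mu_G$ is the probability measure on $\cR_G$ giving every recurrent configuration mass $1/|\cR_G|$; by part (iii) of Dhar's theorem this is exactly $\nu_G$. Equivalently, $\nu_G$ pushed forward by $\sigma_G$ is $\mu_G$.

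I do not expect any real obstacle: all the substance is contained in Lemma \ref{lem:bijection} and in the identification $\cR_G = \cA_G$, and the corollary merely names the consequence. The one bookkeeping point to state carefully is the direction of the maps — $\phi_G$ has domain $\cT_G$ and range $\cA_G = \cR_G$ — so that it is the image of $\mu_G$ \emph{under $\phi_G$} (not under $\sigma_G$) that equals $\nu_G$.
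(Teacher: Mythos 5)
Your proposal is correct and follows exactly the route the paper intends: the corollary is stated without a separate proof precisely because it is the immediate combination of Dhar's theorem (iii) ($\nu_G$ uniform on $\cR_G$), the identity $\cR_G = \cA_G$, and Lemma \ref{lem:bijection}, together with the elementary fact that a bijection between finite sets carries the uniform measure to the uniform measure. Your care about the direction of the maps ($\phi_G : \cT_G \to \cA_G$) is exactly the right bookkeeping point.
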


The next lemma summarizes an observation about the 
nature of the inverse map $\phi_G$ that will be important
for infinite volume limits. For $t \in \cT_G$,
write $d_t(\cdot,\cdot)$ for graph distance 
in the tree $t$. For $x \in V$, write 
$\pi_x$ for the unique self-avoiding path 
from $x$ to $s$ in $t$. Let us write $x \sim y$ 
if there exists an edge in $G$ between $x$ and $y$. 
Let
\eqnst
{ \cN_x 
  = \{ y \in V^+ : \text{$y \sim x$ or $y = x$} \}. }
Let $v_x \in V^+$ be the unique vertex such that
$v_x \in \pi_y$ for all $y \in \cN_x$, and
$d_t(s,v_x)$ is maximal. (Informally, this is 
the ``first meeting point'' of the paths 
$\{ \pi_y \}_{y \in \cN_x}$.) Let us write
$\vec{F}_x$ for the following directed subtree of $t$:
\eqnst
{ \vec{F}_x 
  := \left\{ e \in t : 
     e_- \in \cup_{y \in \cN_x} \pi_y,\, 
     d_t(s,e_+) \ge d_t(s,v_x) \right\}. }
Each edge in $\vec{F}_x$ is directed towards $v_x$,
so specifying $\vec{F}_x$ is equivalent to specifying
the undirected, rooted tree $(F_x,v_x)$.
Recall that $e_x$ is the unique edge 
of $t$ satisfying $(e_x)_- = x$ and
$d_t(s,(e_x)_+) = d_t(s,x)-1$.
Write $\eta = \phi_G(t)$.

\begin{lemma}
\label{lem:depend}\ \\
(i) The value of $\eta_x$ only depends on $t$ through the 
rooted subtree $(F_x,v_x)$. \\
(ii) The value of $\eta_x$ only depends on $t$ through 
the differences $\{ d_t(s,x) - d_t(s,y) : y \sim x \}$ and
the edge $e_x$.\\
(iii) In fact, $\eta_x$ only depends on 
the cardinality of the set $\{ y \sim x : d_t(s,x) - d_t(s,y) \ge 1 \}$,
the set $\{ y \sim x : d_t(s,x) - d_t(s,y) = 1 \}$ and 
the edge $e_x$.
\end{lemma}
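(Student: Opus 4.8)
The plan is to deduce all three statements directly from the explicit description of the inverse map $\phi_G$ obtained in the proof of Lemma~\ref{lem:bijection}. There, writing $i = d_t(s,x)$, one has $\eta_x = \alpha_{P_x,K_x}(e_x)$, with $P_x,K_x$ as in \eqref{e:data} and $e_x$ the unique edge of $t$ in $P_x$; and by \eqref{e:B's} the burning sets are distance spheres, $B(j) = \{z \in V^+ : d_t(s,z) = j\}$. Since the family $\{\alpha_{P,K}\}$ is fixed once and for all, independently of $t$, it suffices for each part to identify the features of $t$ on which the triple $(P_x,K_x,e_x)$ depends; parts (i)--(iii) then correspond to three successively coarser such descriptions.

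I would begin with (iii). Substituting \eqref{e:B's} into \eqref{e:data} shows that $P_x$ is the set of oriented edges of $\vec G$ from $x$ to a neighbour $y$ with $d_t(s,y)=i-1$, i.e.\ with $d_t(s,x)-d_t(s,y)=1$, so $P_x$, and hence $|P_x|$, is a function of the set $\{y \sim x : d_t(s,x)-d_t(s,y)=1\}$ alone, the multiplicities $a_{xy}$ being fixed by $G$. Likewise $n_x = \sum_{y \in \cup_{j \le i-1} B(j)} a_{yx} = \sum_{y \sim x,\, d_t(s,x)-d_t(s,y)\ge 1} a_{yx}$ depends on $t$ only through the set $\{y \sim x : d_t(s,x)-d_t(s,y)\ge 1\}$ (recorded, for a simple graph, by its cardinality). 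As $\deg_G(x)$ is fixed, $K_x = \{\deg_G(x)-n_x,\dots,\deg_G(x)-n_x+|P_x|-1\}$ is then a function of $n_x$ and $|P_x|$, and $e_x$ is read off $t$ directly. This proves (iii); and (ii) follows at once, since the indexed family $(d_t(s,x)-d_t(s,y))_{y \sim x}$ determines both sets occurring in (iii), $e_x$ being common data.

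For (i) it remains to check that the data in (ii) are determined by the rooted subtree $(F_x,v_x)$. The key point is that $v_x$ lies on $\pi_y$ for every $y \in \cN_x$, in particular for $y=x$ and for $y=(e_x)_+$; hence $d_t(s,y) = d_t(s,v_x)+d_t(v_x,y)$, so $d_t(s,x)-d_t(s,y) = d_t(v_x,x)-d_t(v_x,y)$, and the $t$-geodesics from $v_x$ to $x$ and to $y$ lie in $F_x$ by the definition of $\vec F_x$; thus these differences are read off $(F_x,v_x)$. Moreover $(e_x)_+ \in \cN_x$ while $x \notin \pi_{(e_x)_+}$ (the path from the parent of $x$ to $s$ avoids $x$), so $v_x \neq x$; combined with $v_x \in \pi_x$ this gives $d_t(s,v_x) \le d_t(s,x)-1 = d_t(s,(e_x)_+)$, whence $e_x \in \vec F_x$, and $e_x$ is recovered as the parent edge of $x$ in the rooted tree $(F_x,v_x)$. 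This establishes (i).

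The algebraic identities for $P_x$, $n_x$ and $K_x$ are routine; the one place calling for care is the combinatorics behind (i) --- verifying that $v_x$ is a strict ancestor of $x$ (so that $e_x$ genuinely lies in $F_x$ and can be named inside $(F_x,v_x)$), and that the geodesics from $v_x$ to the vertices of $\cN_x$ stay inside $F_x$ --- which is exactly where the specific form of the definition of $\vec F_x$ enters.
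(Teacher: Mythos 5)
Your proposal is correct and follows essentially the same route as the paper: substitute the distance characterization \eqref{e:B's} into \eqref{e:data} to see that $(n_x,P_x,K_x,e_x)$ depend only on the distance differences (giving (ii) and (iii)), and then use $v_x\in\pi_y$ for all $y\in\cN_x$ to translate these into distances from $v_x$ within $F_x$ (giving (i)). The only differences are cosmetic — you prove (iii) first rather than (i), and you spell out the verification that $v_x\neq x$ and $e_x\in\vec F_x$, which the paper leaves implicit.
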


\begin{proof}
(i) By the definition of $\phi_G$, $\eta_x$ only 
depends on $n_x$, $P_x$ (which determine $K_x$) and
$e_x$. Due to the characterization of the $B(i)$'s in terms
of graph distance \eqref{e:B's} we have
\eqnsplst
{ n_x
  &= \sum_{\substack{y : y \sim x \\ d_t(s,y) < d_t(s,x)}} a_{yx}
  = \sum_{\substack{y : y \sim x \\ d_t(v_x,y) < d_t(v_x,x)}} a_{yx}
  = \sum_{\substack{y : y \sim x \\ d_{F_x}(v_x,y) < d_{F_x}(v_x,x)}} a_{yx}, }
and the last expression only depends on $(F_x,v_x)$.
Similarly, 
\eqnsplst
{ P_x
  &= \{ e \in \vec{E} : 
     e_- = x,\, d_t(s,e_+) = d_t(s,x) - 1 \} \\
  &= \{ e \in \vec{E} : 
     e_- = x,\, d_{F_x}(v_x,e_+) = d_{F_x}(v_x,x) - 1 \}, }
and the last expression only depends on $(F_x,v_x)$.
Finally, since $e_x$ is the unique edge of $t$ 
incident with $x$ that is directed away from $x$,
we have
\eqnst
{ \text{$e_x$ is the unique edge $e$ in $\vec{F}_x$ such that 
    $e_- = x$ and $d_{F_x}(v_x,e_+) = d_{F_x}(v_x,x) - 1$}. }
(ii) This is similar to part (i). We have
\eqnsplst
{ n_x
  &= \sum_{\substack{y : y \sim x \\ d_t(s,x) - d_t(s,y) > 0}} a_{yx} \\
  P_x
  &= \{ e \in \vec{E} : 
     e_- = x,\, d_t(s,x) - d_t(s,e_+) = 1 \}. }
This proves the claim. (iii) also follows from the above
expressions.
\end{proof}

\section{The Wired Spanning Forest}
\label{sec:WSF}

Let now $G = (V,E)$ be an infinite locally finite
graph. For simplicity, from now on we restrict our attention 
to simple graphs (no multiple edges or loops), but note 
that it is possible to extend all our results 
in Sections \ref{sec:limitsI} and \ref{sec:limitsII} 
to multigraphs, with essentially the same arguments.

An \emph{exhaustion} of $V$ is a sequence
$V_1 \subset V_2 \subset \dots \subset V$ 
such that $\cup_{n=1}^\infty V_n = V$.
Let $G_n = (V^+_n,E_n)$ denote the graph obtained
from $G$ by identifying all vertices in 
$V \setminus V_n$ to a single vertex $s$,
so that $V^+_n = V_n \cup \{ s \}$, and
removing loops at $s$. Sometimes $G_n$ is called
the \emph{wired graph} associated to $V_n$, 
where ``wired'' refers to the fact that 
all connections outside $V_n$ have been forced
to occur. Recall that $\mu_{G_n}$
is the uniform probability measure on the
set of spanning trees $\cT_{G_n}$.
We will write $\Rightarrow$ to denote
weak convergence of measures.

The usefulness for infinite volume limits 
of the bijection in Section \ref{ssec:bijection}
lies in the well-known theorem stated below.
This theorem is implicit in the work of 
Pemantle \cite{Pem}, and was made explicit 
by H\"aggstr\"om \cite{Hagg}, in the case 
of $\Z^d$. The $\Z^d$ proof immediately applies
in the generality stated. 

\begin{theorem}[{Pemantle \cite{Pem}; see also \cite{Hagg}}]
\label{thm:Pem}
Let $G = (V,E)$ be an infinite locally finite graph. 
There exists a measure $\mu$ on $\{ 0, 1 \}^E$ 
such that $\mu_{G_n} \Rightarrow \mu$ 
independently of the exhaustion.
The measure $\mu$ concentrates on spanning forests 
of $G$ all of whose components are infinite.
\end{theorem}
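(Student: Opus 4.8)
The plan is to reduce weak convergence to the convergence of edge--inclusion probabilities, establish the latter through the transfer current theorem together with electrical--network limit arguments, and finally read off the support properties from the corresponding facts about finite spanning trees. First I would record that $\{0,1\}^E$ is compact and metrizable, and that a sequence of probability measures on it converges weakly precisely when the probabilities of all cylinder events converge. Since any cylinder event is a finite Boolean combination of the events $\{ B \subseteq T \}$ with $B \subseteq E$ finite, inclusion--exclusion reduces everything to showing that $\mu_{G_n}(B \subseteq T)$ converges as $n \to \infty$ for every finite $B = \{e_1,\dots,e_k\} \subseteq E$, and to a limit that does not depend on the exhaustion.

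For this step I would invoke the Transfer Current Theorem (Burton and Pemantle; see also \cite{BLPS01}): for the uniform spanning tree of the finite graph $G_n$,
\eqnst
{ \mu_{G_n}(e_1, \dots, e_k \in T)
  = \det\big[ Y_n(e_i,e_j) \big]_{i,j=1}^{k}, }
where $Y_n(e,f)$ is the transfer current, i.e.\ the amount of current through $f$ when a unit current is imposed from $e_-$ to $e_+$ in $G_n$ with unit conductances. Since the determinant is continuous, it suffices to show that $Y_n(e,f)$ converges as $n \to \infty$ for each fixed pair of edges $e,f$. This is where the substance of the proof lies: $Y_n(e,f)$ is a discrete difference of the Green's function of simple random walk on $G_n$ (killed on exiting $V_n$), so one must show that these converge --- directly in the transient case, where the Green's functions increase to the Green's function of $G$; and via convergence of Green's function \emph{differences} (equivalently, of the unique finite--energy unit current flow from $e_-$ to $e_+$) in the recurrent case. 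In both cases the limiting object is intrinsic to $G$, which simultaneously yields convergence and independence of the exhaustion (alternatively, exhaustion--independence follows by interleaving two exhaustions into one). I expect the careful treatment of this electrical--network convergence, uniform over the relevant pairs of edges, to be the main obstacle; the rest of the argument is soft. Write $\mu(B \subseteq T) := \det[Y(e_i,e_j)]$ for the resulting limit and let $\mu$ be the corresponding measure on $\{0,1\}^E$.

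It remains to identify the support of $\mu$ by pushing the corresponding finite--volume facts to the limit. If $C_0 \subseteq E$ is the edge set of a cycle of $G$, then once $C_0 \subseteq E_n$ the tree $T_n$ contains no cycle, so $\mu_{G_n}(C_0 \subseteq T) = 0$ for all large $n$, hence $\mu(C_0 \subseteq T) = 0$; there are countably many cycles, so $\mu$ is concentrated on acyclic subgraphs, i.e.\ on spanning forests of $G$. If $W \subseteq V$ is a finite connected set of vertices with edge boundary $\partial_E W$ in $G$, then for $n$ large enough that $W \subseteq V_n$ we have $s \notin W$, so in $G_n$ every vertex of $W$ is joined to $s$ by a path in $T_n$, which forces $T_n$ to use at least one edge of $\partial_E W$; thus $\mu_{G_n}(\partial_E W \cap T = \es) = 0$ for all large $n$, and therefore $\mu(\partial_E W \cap T = \es) = 0$. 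Any finite component of the forest sampled from $\mu$ would be such a set $W$ with $\partial_E W \cap T = \es$, and since there are only countably many finite connected $W$, $\mu$--a.s.\ every component is infinite (in particular no vertex is isolated, so the sample is a spanning subgraph). Combining these observations, $\mu$ concentrates on spanning forests of $G$ all of whose components are infinite, as claimed. An alternative route to the last point uses Wilson's algorithm of Section~\ref{sec:Wilson}: in $G_n$ the branch of $T_n$ from a vertex $v$ to $s$ is the loop--erasure of simple random walk from $v$ run until it leaves $V_n$, which must grow without bound as $n \to \infty$, placing $v$ in an infinite component in the limit.
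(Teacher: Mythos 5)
Your proposal is correct in outline, but it takes a genuinely different route from the one the paper relies on. The paper gives no self-contained proof: it defers to Pemantle \cite{Pem} as made explicit by H\"aggstr\"om \cite{Hagg} (see also \cite{BLPS01}), whose argument is based on monotonicity/negative association: since $G_n$ is obtained from $G_{n+1}$ by further identification (contraction), the uniform spanning tree probabilities satisfy $\mu_{G_n}[B \subseteq T] \le \mu_{G_{n+1}}[B \subseteq T]$ for every finite $B \subseteq E$, so these probabilities converge monotonically; inclusion--exclusion and interleaving of exhaustions then give a unique weak limit, with no electrical-network limit theory needed beyond the negative-correlation input. You instead go through the Burton--Pemantle Transfer Current Theorem and convergence of the transfer currents $Y_n(e,f)$. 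That route is also standard and yields more: an explicit determinantal description of $\mu$ in terms of the wired transfer current, from which exhaustion-independence is immediate. The trade-off is that the step you yourself flag as the main obstacle --- convergence of $Y_n(e,f)$, via monotone convergence of killed Green functions in the transient case and convergence of the finite-volume unit current flows to the unique finite-energy current in the recurrent case --- is exactly where the work lies, and in your write-up it is asserted rather than proved; it is true (it is the standard convergence of wired currents, cf.\ \cite{BLPS01}), but a complete proof would have to supply it, whereas the monotonicity argument avoids it entirely. (Two minor points: you do not need uniformity of the convergence of $Y_n(e,f)$ over pairs of edges, only pointwise convergence for each fixed pair, since each determinant is of fixed finite size; and one should say explicitly how $\mu_{G_n}$ is viewed as a measure on $\{0,1\}^E$, identifying each edge of $E_n$ incident to $s$ with the corresponding edge of $G$.) Your treatment of the support --- killing finite cycles and finite components via the clopen cylinder events $\{C_0 \subseteq T\}$ and $\{\partial_E W \cap T = \es\}$, with countably many $C_0$ and $W$ --- matches the standard argument and is fine.
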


The measure $\mu$ is also called the 
\emph{Wired Spanning Forest (WSF) measure}. 
Theorem \ref{thm:Pem} naturally leads to the following question.

\begin{question}
\label{open:general limit}
Assume the same conditions as in Theorem \ref{thm:Pem}.
Under what extra conditions does $\nu_{G_n}$ have a unique 
weak limit $\nu$ on the space 
$\prod_{x \in V} \{ 0, \dots, \deg_G(x) - 1 \}$,
independently of the exhaustion?
\end{question}

It is not possible to deduce a general convergence statement
only from Theorem \ref{thm:Pem}. On certain graphs with 
two ends the limit is not unique; see Theorem \ref{thm:JL} in 
Section \ref{sec:limitsI}. However, as we will see in 
Section \ref{sec:limitsI}, there is a general convergence
theorem on certain ``low-dimensional'' graphs. We will need 
to consider the number of components of the WSF, and 
the \emph{ends} of the components. We say that an infinite tree 
\emph{has one end}, if any two infinite self-avoiding paths
in the tree have infinitely many vertices in common.
In the theorem below, statement (i) and the first part of
statement (ii) are due to Pemantle \cite{Pem}. 
The statement on one end in part (ii) was first
proved by BLPS \cite{BLPS01} and in much greater generality. 
Lyons, Morris and Schramm \cite{LMS08} gave a simpler 
and even more general proof with quantitative estimates.

\begin{theorem}[{Pemantle \cite{Pem}; BLPS \cite{BLPS01}}]
\label{thm:Pemalt}
Let $G$ be the $\Z^d$ lattice.\\
(i) Suppose $2 \le d \le 4$. The Wired Spanning Forest is 
$\mu$-a.s.~connected, and has one end.\\
(ii) Suppose $d \ge 5$. The Wired Spanning Forest 
$\mu$-a.s.~consists of infinitely many trees, 
and each tree has one end. 
\end{theorem}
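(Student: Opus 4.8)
The plan is to reduce both statements to known properties of the loop-erased random walk and the uniform spanning tree on $\Z^d$, exploiting Wilson's algorithm rooted at infinity (recalled in Section \ref{sec:Wilson}). Recall that the WSF on $\Z^d$ can be sampled by running independent simple random walks from the vertices of $\Z^d$ in some fixed order, loop-erasing each path, and stopping each walk when it hits the part of the forest built so far; when it never hits, its loop-erasure is an infinite self-avoiding path that becomes a new component. The connectivity dichotomy is then governed by whether two independent simple random walks on $\Z^d$ intersect infinitely often with probability one: for $2 \le d \le 4$ they do (this is the recurrence-of-the-difference-walk-mod-intersections fact, classical since Erd\H{o}s--Taylor and Lawler), so the second walk is absorbed into the first component a.s., forcing a single tree; for $d \ge 5$ two independent walks have positive probability of never intersecting, and a Borel--Cantelli / ergodicity argument upgrades this to: infinitely many vertices start fresh components, so the forest has infinitely many trees a.s. I would cite Pemantle \cite{Pem} for this part and merely sketch the intersection-probability input.

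For the one-end statements I would argue as follows. Fix a vertex $x$ and let $\gamma$ be the future of $x$ in the WSF, i.e.\ the self-avoiding path from $x$ to infinity in its component (this exists since all components are infinite, by Theorem \ref{thm:Pem}). A tree $T$ has more than one end iff some vertex $x\in T$ has at least two disjoint infinite self-avoiding paths leaving it, equivalently iff removing $x$ from $T$ leaves at least two infinite components. Using Wilson's algorithm started from $x$, the path $\gamma$ is the loop-erasure of a simple random walk from $x$; the rest of the component of $x$ consists of branches hung off $\gamma$. For $x$ to have two ends we would need an infinite branch hung off $\gamma$ at some vertex other than the one carrying $\gamma$ itself, i.e.\ a random walk from some site $y$ whose loop-erasure avoids the already-built configuration forever and attaches to $\gamma$. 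One shows this has probability zero: conditionally on $\gamma$, any later walk that gets near $\gamma$ is absorbed, and an infinite branch would have to ``escape to infinity alongside $\gamma$ without ever touching it,'' which in $\Z^d$ fails because of the transience/intersection estimates for random walk relative to a fixed transient path. Summing over the countably many vertices $x$ then gives that a.s.\ no vertex has two ends, so every tree has one end.

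The main obstacle is the one-end assertion for $d \ge 5$: here the WSF is genuinely disconnected, so one cannot reduce to a single tree, and the "escape alongside $\gamma$" heuristic needs a real estimate — one must control the probability that an independent random walk from a distant vertex, loop-erased, produces an infinite path that stays disjoint from the fixed self-avoiding path $\gamma$ and from the bulk of the forest. In $\Z^d$ this is exactly the content of the sharp intersection estimates for random walks and LERW (as in Lawler's work, and as carried out in \cite{Pem} and later quantitatively in \cite{BLPS01,LMS08}); I would not reprove these but would state precisely the estimate used (the probability that a walk from $\partial B(x,R)$ avoids $\gamma \cap B(x,R)$ forever is summable in $R$ in the relevant sense) and reference BLPS \cite{BLPS01} and Lyons--Morris--Schramm \cite{LMS08} for the general-graph version, which subsumes $\Z^d$. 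The $2 \le d \le 4$ one-end statement is then an easy corollary, since there the forest is a single tree and the same no-two-ends-at-any-vertex argument applies verbatim.
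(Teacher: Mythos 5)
The paper offers no proof of Theorem \ref{thm:Pemalt}: it is quoted as background, with part (i) and the first half of (ii) attributed to Pemantle \cite{Pem} and the one-end statement in (ii) to BLPS \cite{BLPS01} (with \cite{LMS08} giving a simpler, quantitative proof), so deferring the hard inputs to the literature is consistent with what the paper itself does. Your reduction of the connectivity dichotomy to the intersection behaviour of two independent simple random walks on $\Z^d$ (a.s.\ infinitely many intersections for $2\le d\le 4$, positive probability of mutual avoidance for $d\ge 5$), implemented through Wilson's method rooted at infinity, is indeed the standard route behind Pemantle's and BLPS's arguments (cf.\ the criterion in \cite[Theorems 9.2, 9.4]{BLPS01} mentioned in the introduction), and citing rather than reproving the intersection facts is reasonable.

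There is, however, a genuine flaw in the part you do argue, namely the one-end property. In Wilson's method rooted at infinity, a later walk either hits the forest built so far, in which case only the \emph{finite} loop-erased segment up to the hitting time is attached, or it never hits it, in which case its loop-erasure starts a \emph{new} component. Consequently no single walk can hang an infinite branch off $\gamma$: the event you propose to estimate (``a random walk from some site $y$ whose loop-erasure avoids the already-built configuration forever and attaches to $\gamma$'') is empty by construction, and ruling it out proves nothing. A second end of the component of $x$ would instead have to arise from infinitely many \emph{finite} branches accumulating into an infinite ray disjoint from $\gamma$; excluding that accumulation is exactly the nontrivial content of the one-end theorems, and it requires quantitative control of the past of a vertex (sizes of attached branches), which is what Pemantle does for $2\le d\le 4$ and what BLPS \cite{BLPS01} and Lyons--Morris--Schramm \cite{LMS08} do in general --- not the ``escape alongside $\gamma$ without touching it'' estimate you state. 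Moreover, the assertion that ``conditionally on $\gamma$, any later walk that gets near $\gamma$ is absorbed'' is false precisely in the regime $d\ge 5$, where a walk started next to $\gamma$ avoids it forever with positive probability. Since you ultimately cite \cite{BLPS01,LMS08} for the one-end statement, the theorem is still covered by reference, as in the paper; but the sketched reduction, taken as a proof, would trivialize a hard result and does not go through as written.
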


\section{Wilson's method}
\label{sec:Wilson}

In this section we recall some facts about Wilson's method, 
that is an indispensable tool in studying uniform spanning 
trees. 

Let $\pi = [\pi_0, \pi_1, \dots, \pi_M]$ be a finite path 
in some graph. The \emph{loop-erasure} of $\pi$
is defined by chronologically removing loops from the path 
as they are created. That is, we set 
$\sigma = \LE(\pi) := [\sigma_0, \dots, \sigma_K]$,
where we inductively define
\eqnsplst
{ s_0 
  &:= 0 \\
  \sigma(0) 
  &:= \pi(0) \\
  s_j 
  &:= \max \{ k \ge s_{j-1} 
      :  \pi(k) = \sigma(j-1) \}, \quad j \ge 1, \\
  \sigma(j) 
  &:= \pi(s_j+1), \quad j \ge 1. }
Note that loop-erasure also makes sense for an infinite
path that visits any vertex only finitely often.

Suppose now that $G = (V,E)$ is a finite graph,
and $w : E \to (0,\infty)$ is a function.
We call $w(e)$ the \emph{weight} of the edge $e$.
The pair $(G,w)$ is called a \emph{network}.
Most of the time no weights will be specified, and then
it is assumed that $w(e) = 1$ for all $e \in E$. 
The weight of a spanning tree $t \in \cT_G$ is defined 
by $w(t) := \prod_{e \in t} w(e)$. 
We extend the definition of $\mu_G$ to networks
by requiring that each element of $\cT_G$ receives
probability proportional to its weight.

A \emph{network random walk} on $(G,w)$ is a 
Markov chain $\{ S(n) \}_{n \ge 0}$ with state space 
$V$ and transition probabilities:
\eqnst
{ \P [ S(k+1) = v \,|\, S(k) = u ] 
  = \frac{w(u,v)}{\sum_{v' \sim u} w(u,v')}. }
When the weights are constant, we call this 
\emph{simple random walk} on $G$. The definition
of network random walk immediately extends to 
infinite networks as long as for each vertex
$u \in V$ we have $\sum_{v' \sim u} w(u,v') < \infty$.

Let $v_1, \dots, v_N$ be an enumeration of $V$, 
and let $r$ be a fixed vertex of $G$. 
Let $\{ S^j_k \}_{k \ge 0}$, $1 \le j \le N$
be independent network random walks on $G$, with 
$S^j(0) = v_j$. We define a sequence of subtrees 
$\cF_0 \subset \cF_1 \subset \dots \subset \cF_N$ of $G$.
Put $\cF_0 = \{ r \}$, and inductively define for
$j \ge 1$:
\eqnspl{e:Wilson}
{ T_j 
  &:= \inf \{ k \ge 0 : S^j(k) \in \cF_{j-1} \} \\
  \cF_j
  &:= \cF_{j-1} \cup \LE(S^j[0,T^j]). }
It is clear from the construction that $\cF_N$ is a spanning
tree of $G$.

\begin{theorem}[Wilson \cite{Wilson}]
\label{thm:Wilson}
On any finite network, regardless of what enumeration 
was chosen, $\cF_N$ is distributed according to $\mu_G$.
\end{theorem}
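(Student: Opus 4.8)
The plan is to prove that Wilson's algorithm samples from $\mu_G$ by the standard ``cycle-popping'' argument due to Wilson, which I find cleaner than the original Markov-chain bookkeeping. First I would set up the cycle-popping picture: attach to each vertex $v \ne r$ an independent infinite sequence of arrows, where the $k$-th arrow out of $v$ points to a neighbour chosen according to the network-walk transition probabilities; think of these as a stack of arrows at $v$ with the top arrow currently ``visible''. The visible arrows define a function from $V \setminus \{r\}$ to $V$, i.e. a directed graph in which every vertex has out-degree one, so it is a union of a forest pointing toward $r$ together with some directed cycles. A \emph{pop} of a cycle $C$ means: for every $v \in C$, discard the currently visible arrow at $v$, exposing the next one in the stack; we then look for cycles again and repeat.

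The key combinatorial lemma to establish is that the set of cycles popped, together with the order-independence, is intrinsic: if one ever reaches a configuration with no cycles, the sequence of popped cycles (as a multiset, with their ``colours'', i.e. which level of the stack they came from) does not depend on the order in which cycles were chosen for popping, and neither does the final cycle-free arrow configuration. The proof is a short ``diamond lemma'' / local-confluence argument: if $C$ and $C'$ are two distinct poppable cycles, popping $C$ leaves $C'$ still poppable (its arrows are untouched) and vice versa, and popping both in either order yields the same configuration; one then bootstraps this to arbitrary finite pop-sequences. I would next observe that Wilson's algorithm, run with walks started in any enumeration order, is exactly one legal scheme for choosing which cycles to pop: loop-erasing the walk from $v_j$ until it hits the current tree $\cF_{j-1}$ amounts to popping precisely the cycles the walk creates, in chronological order, and the loop-erased path is read off from the resulting visible arrows. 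Hence the output $\cF_N$ equals the final cycle-free forest, which by the lemma is independent of the enumeration.

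It remains to identify the distribution of the final forest. For this I would compute, for a fixed cycle-free forest $F$ rooted at $r$ and a fixed finite multiset $\mathcal{C}$ of coloured cycles, the probability that the coloured arrows produce exactly the popped-cycles multiset $\mathcal{C}$ and final forest $F$; because the arrows are independent, this probability factors as $\big(\prod_{C \in \mathcal{C}} \text{wt}(C)\big)\cdot \text{wt}(F)$ where $\text{wt}$ of a directed structure is the product over its arrows $u \to v$ of $w(u,v)/\sum_{v' \sim u} w(u,v')$. Crucially, which coloured cycles can appear is determined solely by $\mathcal C$ itself and not by $F$ (this is again the order-independence lemma: the popped cycles are the same whatever cycle-free forest sits underneath). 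Therefore $\P[\text{final forest} = F] = \text{wt}(F) \cdot \sum_{\mathcal{C}} \prod_{C \in \mathcal C}\text{wt}(C)$, and the sum over admissible $\mathcal{C}$ is a constant $Z$ not depending on $F$. Thus $\P[\cF_N = F] \propto \text{wt}(F) = \prod_{e \in F} w(e)/(\text{normalizing factors depending only on vertices})$; dividing out the per-vertex factors, this is proportional to $w(t) = \prod_{e \in t} w(e)$ for the corresponding undirected spanning tree $t$, which is the definition of $\mu_G$. (One must also check the algorithm terminates almost surely, i.e. only finitely many cycles are popped; this follows because on a finite graph the network walk is recurrent toward $r$, so with probability one only finitely many arrows are ever consulted.)

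The main obstacle is the order-independence lemma: getting the ``colours'' (stack levels) right so that the confluence statement is literally true, and then running the induction on pop-sequences carefully. Everything after that — matching Wilson's loop-erasure to cycle-popping, and the factorization of probabilities — is essentially bookkeeping. I would also note that this is precisely the argument that powers the infinite-graph extensions recalled later in the paper, so presenting it in the cycle-popping form pays off; the alternative is to cite Wilson \cite{Wilson} directly and only sketch, since Theorem \ref{thm:Wilson} is quoted as a known result.
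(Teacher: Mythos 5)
The paper offers no proof of Theorem \ref{thm:Wilson}: it is quoted from Wilson \cite{Wilson} as a known result, so there is no internal argument to compare against. What you propose is, in substance, Wilson's original cycle-popping proof, and as a sketch it is correct: distinct poppable cycles in a visible out-degree-one configuration are automatically vertex-disjoint, so your local-confluence step is sound; the factorization $\P[\text{popped}=\mathcal{C},\ \text{final}=F]=\bigl(\prod_{C\in\mathcal{C}}\mathrm{wt}(C)\bigr)\mathrm{wt}(F)$ is justified because that event pins down the values of specified stack entries and the legality of a coloured-cycle family is a property of the family alone, not of the cycle-free configuration beneath it; and dividing out the per-vertex normalizing factors $\sum_{v'\sim u}w(u,v')$ identifies the law of the final forest with the network measure $\mu_G$ of Section \ref{sec:Wilson}. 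The one point that genuinely needs care in a full write-up --- and which you correctly single out --- is the strong, coloured form of the order-independence lemma: local confluence of single pops is not by itself the statement you use; you need that every maximal popping sequence is either infinite or pops the same multiset of coloured cycles and reveals the same configuration. That stronger statement is what allows you (a) to transfer almost-sure termination from the particular popping order realized by Wilson's walks (each walk hits the current tree in finite time on a finite connected network) to all orders, and (b) to sum over $\mathcal{C}$ independently of $F$ in the final computation. With that lemma proved, your argument is complete; given that the paper itself treats the theorem as a citation, simply referring to \cite{Wilson} would also have been acceptable, but the cycle-popping route has the advantage of being self-contained and of underpinning the infinite-graph versions (Theorems \ref{thm:Wilsonrec} and \ref{thm:Wilsontrans}) used later.
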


Suppose now that $G = (V,E)$ is a locally finite 
infinite recurrent graph. Essentially the same 
method can be applied as in the finite case.
Let $v_1, v_2, \dots$ be an enumeration of $V$,
and let $r \in V$ be fixed. Define $\cF_j$, $j \ge 0$
as in the finite case, and set $\cF := \cup_{j \ge 0} \cF_j$.
Then $\cF$ is a.s.~a spanning tree of $G$

\begin{theorem}[{BLPS \cite[Theorem 5.6]{BLPS01}}]
\label{thm:Wilsonrec}
On any recurrent infinite graph,
regardless of the enumeration chosen, $\cF$ is
distributed according to $\mu$. 
\end{theorem}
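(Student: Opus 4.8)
The plan is to pin down the law of $\cF$ coordinate by coordinate, comparing Wilson's method on $G$ with Wilson's method on the finite wired graphs $G_n$, where Theorems~\ref{thm:Wilson} and~\ref{thm:Pem} apply. Fix a finite set of edges $W \subset E$; since $W$ is arbitrary it is enough to show that the set of edges of $\cF$ lying in $W$ has the same law as the corresponding marginal of $\mu$. The key observation is that this marginal is frozen after finitely many steps of the algorithm: in Wilson's construction the outgoing edge of a vertex (in the tree $\cF$ rooted at $r$) is determined the moment that vertex is absorbed into the partial forest, and an edge belongs to $\cF$ precisely when it is the outgoing edge of one of its two endpoints. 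Hence, if $N$ is chosen so that every endpoint of every edge of $W$ appears among $v_1, \dots, v_N$, then the edges of $\cF$ lying in $W$ coincide with the edges of $\cF_N$ lying in $W$.

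Now fix an exhaustion $V_1 \subset V_2 \subset \cdots$ of $V$. For each $n$ large enough that $r, v_1, \dots, v_N \in V_n$, run Wilson's algorithm on the finite graph $G_n$ rooted at $r$ with the enumeration $v_1, v_2, \dots$ (restricted to $V_n$, in the same order); write $\cF^{(n)}$ for the resulting spanning tree and $\cF^{(n)}_j$ for the partial forest after $j$ steps. By Theorem~\ref{thm:Wilson}, $\cF^{(n)}$ has law $\mu_{G_n}$, and by the same freezing observation its edges lying in $W$ coincide with those of $\cF^{(n)}_N$. Since $\mu_{G_n} \Rightarrow \mu$ by Theorem~\ref{thm:Pem}, and weak convergence on $\{0,1\}^E$ forces convergence of the marginal on the finite set $W$, the law of the restriction of $\cF^{(n)}$ to $W$ converges to the restriction of $\mu$ to $W$ as $n \to \infty$.

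It remains to couple the first $N$ steps of the two algorithms so that $\P[\cF^{(n)}_N = \cF_N] \to 1$; combined with the previous paragraph this yields that the restriction of $\cF$ to $W$ has the same law as the restriction of $\mu$ to $W$, and, $W$ being arbitrary, proves the theorem. Generate the first $N$ steps of the $G$-algorithm from network random walks $S^1, \dots, S^N$ on $G$, giving $\cF_0 \subseteq \cdots \subseteq \cF_N$. Because $G$ is recurrent, $S^j$ hits the nonempty finite set $\cF_{j-1}$ (which contains $r$) at an a.s.~finite time $H_j$; hence $\bigcup_{j=1}^N S^j[0,H_j]$ is a.s.~a finite subset of $V$, contained in $V_m$ for some a.s.~finite random index $m$. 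Drive the $G_n$-algorithm with the walks obtained from $S^1, \dots, S^N$ by the natural coupling in which, at a vertex $x \in V_n$, the $G_n$-walk copies the move of the $G$-walk, stepping to the sink exactly when the $G$-walk leaves $V_n$ (inside $V_n$ the two walks have identical transition probabilities, and the event of leaving $V_n$ in $G$ matches the event of stepping to the sink in $G_n$). On the event $\{m < n\}$, which has probability tending to $1$, none of the $N$ walk-pieces leaves $V_n$, so each step-$j$ walk of the $G_n$-algorithm equals $S^j$ up to time $H_j$ and is absorbed at the same place; a short induction on $j = 0, 1, \dots, N$ then gives $\cF^{(n)}_j = \cF_j$, and in particular $\cF^{(n)}_N = \cF_N$ on this event.

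The step that needs the most care is this last coupling — specifically, verifying that recurrence of $G$ forces each of the finitely many relevant random walks to be absorbed into the current finite forest before escaping $V_n$, with probability tending to one. Everything else is essentially bookkeeping: the reduction to a finite edge set $W$, the observation that the relevant marginal of $\cF$ stabilizes after finitely many steps, and the invocation of Theorems~\ref{thm:Wilson} and~\ref{thm:Pem}; note also that the enumeration played no special role, so the conclusion holds for every enumeration. The fact that $\cF$ is a.s.~a genuine spanning tree of $G$ (recalled just before the statement), which is what makes it meaningful to speak of the law of $\cF$ on $\{0,1\}^E$, is likewise a consequence of recurrence: each $T_j$ is then a.s.~finite, and splicing loop-erased paths onto the partial forest never creates a cycle while eventually covering all of $V$.
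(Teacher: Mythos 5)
Your argument is correct. The paper gives no proof of this statement (it is quoted from BLPS \cite{BLPS01}), but your proof is the standard one and coincides in substance with the coupling the paper itself performs in the recurrent case of the proof of Theorem~\ref{thm:AJlow}: freeze the marginal on a finite edge set after finitely many steps, run Wilson's method on $G$ and on the wired graphs $G_n$ with the same walks, use recurrence to see that with probability tending to one the finitely many relevant walk segments are absorbed into the current forest before leaving $V_n$, and then compare with $\mu_{G_n} \Rightarrow \mu$ from Theorem~\ref{thm:Pem} via Theorem~\ref{thm:Wilson}. Two cosmetic points only: the enumeration used for $G_n$ must also include the sink $s$ (append it after $v_N$, say) so that Theorem~\ref{thm:Wilson} is applied to a genuine spanning tree of $G_n$, and the coupled $G_n$-walk should be specified (e.g.\ continued by an independent walk) after it first reaches $s$, exactly as the paper does in the proof of Theorem~\ref{thm:AJlow}; on your high-probability event neither of these matters.
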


Suppose now that $G = (V,E)$ is a locally finite 
infinite transient graph.
Wilson's method can be applied to this case as well,
by letting the root $r$ be ``at infinity''. That is,
let $v_1, v_2, \dots$ be an enumeration of $V$, 
set $\cF_0 := \es$, and define $T_j$ and $\cF_j$ 
as in \eqref{e:Wilson}. Now some of the $T_j$'s will
be infinite, but as noted earlier, loop-erasure
still makes sense due to transience. 
We set $\cF := \cup_{j \ge 1} \cF_j$.
Then $\cF$ is a.s.~a spanning forest of $G$.

\begin{theorem}[{BLPS \cite[Theorem 5.1]{BLPS01}}]
\label{thm:Wilsontrans}
On any transient infinite graph,
regardless of the enumeration chosen, $\cF$ is 
distributed according to $\mu$.
\end{theorem}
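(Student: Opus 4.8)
The plan is to realise $\cF$ as an almost-sure limit, along an exhaustion, of finite-volume Wilson trees, and then read off its law from Theorems~\ref{thm:Wilson} and~\ref{thm:Pem}. Fix the enumeration $v_1, v_2, \dots$ of $V$ and an arbitrary exhaustion $V_1 \subset V_2 \subset \cdots$ of $V$. On each wired graph $G_n$, run Wilson's algorithm with root $s$, processing the vertices of $V_n$ in the order inherited from $v_1, v_2, \dots$; by Theorem~\ref{thm:Wilson} the resulting spanning tree, call it $\cF(G_n)$, has law $\mu_{G_n}$. Run the algorithm rooted at infinity on $G$ with the enumeration $v_1, v_2, \dots$, producing $\cF = \cup_{j \ge 1} \cF_j$, and couple everything by using one family of network random walks $S^1, S^2, \dots$: inside $G_n$, the walk started at $v_j$ follows $S^j$ until the first time $S^j$ leaves $V_n$, and is then absorbed at $s$ (this has the correct law because collapsing the head of a step to $s$ whenever it lands outside $V_n$ is exactly the effect of the wiring).

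The crux is to prove, by induction on $j$, that almost surely, for every $j$ and every edge $e$ of $G$, $\bone[e \in \cF_j(G_n)] \to \bone[e \in \cF_j]$ as $n \to \infty$ (for $n$ large enough that both endpoints of $e$ lie in $V_n$, this event is unambiguous). Assume it through step $j-1$ and consider step $j$. If $T_j < \infty$ in the infinite procedure, then for $n$ large the segment $S^j[0,T_j]$ stays in $V_n$ and, by the induction hypothesis, first meets $\cF_{j-1}(G_n)$ at the same vertex, so the loop-erased path added at step $j$ is literally identical in $G_n$ and in $G$. If $T_j = \infty$, then transience makes $S^j$ visit every vertex only finitely often, hence leave every $V_n$; for $n$ large it leaves $V_n$ (at a time $\tau_n \to \infty$) before meeting $\cF_{j-1}(G_n) \setminus \{s\}$, so in $G_n$ it is run exactly up to $\tau_n$, and the claim reduces to the coordinatewise convergence $\LE(S^j[0,\tau_n]) \to \LE(S^j[0,\infty))$ — the delicate point, discussed in the last paragraph. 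Combining the two cases with the induction hypothesis gives the claim for $j$.

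Now fix an edge $e = [v_i, v_{i'}]$ and put $J = \max(i, i')$. In any run of Wilson's algorithm (finite, or rooted at infinity), once both endpoints of $e$ belong to the current forest, $e$ can never be added afterwards, since a later walk is stopped on first reaching either endpoint and so cannot traverse $e$. As $v_k$ belongs to the forest by step $k$, both endpoints of $e$ lie in $\cF_J$, and, for $n$ large enough that $v_1, \dots, v_J \in V_n$, also in $\cF_J(G_n)$. Hence $\{e \in \cF\} = \{e \in \cF_J\}$ and $\{e \in \cF(G_n)\} = \{e \in \cF_J(G_n)\}$, so the crux gives $\bone[e \in \cF(G_n)] \to \bone[e \in \cF]$ almost surely. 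A probability law on $\{0,1\}^E$ is determined by its finite-dimensional marginals, so $\cF(G_n) \Rightarrow \cF$; but $\cF(G_n)$ has law $\mu_{G_n}$ by Theorem~\ref{thm:Wilson} and $\mu_{G_n} \Rightarrow \mu$ by Theorem~\ref{thm:Pem}, whence $\cF$ has law $\mu$. The enumeration was arbitrary, which is the assertion.

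The step I expect to be the main obstacle is the coordinatewise convergence $\LE(S^j[0,\tau_n]) \to \LE(S^j[0,\infty))$ when $T_j = \infty$ — in particular, ruling out that the far tail of $\LE(S^j[0,\tau_n])$ re-enters a fixed finite region for arbitrarily large $n$. That for each fixed $K$ the first $K$ edges of the two loop-erasures agree once $n$ is large is easy (the last visits of $S^j$ to the first $K$ vertices of $\LE(S^j[0,\infty))$ have all occurred by then, so the defining ``last occurrence'' times have stabilised), and $S^j(\tau_n)$ leaves every finite set as $n \to \infty$; but combining these into the required uniform-in-$n$ control is exactly where transience has to be used in an essential way — $S^j$ visits each vertex only finitely often, so $\LE(S^j[0,\infty))$ is an honest infinite self-avoiding path that leaves every finite set — and it takes some care, for instance by first showing that $\LE(S^j[0,\infty))$ permanently exits a large neighbourhood of the region of interest within a bounded number of steps and then transferring this bound to the truncations $\LE(S^j[0,\tau_n])$.
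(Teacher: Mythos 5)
The paper itself offers no proof of this statement --- it is quoted from BLPS \cite[Theorem 5.1]{BLPS01} --- so the comparison is with the standard argument, which is also the coupling the paper runs in the transient case of Theorem \ref{thm:AJlow} and in Lemma \ref{lem:highDcoupling}: your overall strategy (same random walks for $G_n$ and $G$, reduction to the finitely many walks started at the endpoints of the edges of interest, then Theorem \ref{thm:Wilson} plus Theorem \ref{thm:Pem}) is exactly the right one. However, as written there is a genuine gap, in two places. First, in the induction step with $T_j=\infty$ you assert that for large $n$ the walk leaves $V_n$ before meeting $\cF_{j-1}(G_n)\setminus\{s\}$. This does not follow from $T_j=\infty$: $\cF_{j-1}(G_n)$ is \emph{not} contained in $\cF_{j-1}$, since it contains the truncated loop-erasures $\LE(S^i[0,\tau^i_n])$, which include vertices of the range of $S^i$ that are only erased in the limit; $S^j$ can hit the range of an earlier walk without ever hitting its infinite loop-erasure, so it may well hit $\cF_{j-1}(G_n)$ inside $V_n$ even though $T_j=\infty$. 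Hence your dichotomy ``either the two runs coincide literally up to $T_j$, or the $G_n$-run is the plain truncation at $\tau_n$'' is unjustified, and with it the reduction of the induction step to convergence of $\LE(S^j[0,\tau_n])$. Second, the crux you flag yourself --- ruling out that loops erased after the truncation time reach back into a fixed finite window --- is left as a plan rather than proved; note also that the almost sure edgewise stabilization you posit as induction hypothesis is stronger than the method delivers (there is no Borel--Cantelli control over $n$) and stronger than what is needed.

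The repair is to localize throughout, as the paper does: fix the finite edge set $B$, choose a finite $C\supset B$ capturing the relevant branches with probability at least $1-\eps$, let $\hat\tau^j_C$ be the last visit of $S^j$ to $C$, and use transience to ensure that, with probability at least $1-\eps$ for all large $n$, each $S^j$ after the (random) time at which the two runs part company never returns to $S^j[0,\hat\tau^j_C)$ --- this is the role of \eqref{e:notback} and \eqref{e:contained-paths} in the paper. On such events, erasures performed after the truncation cannot affect the configuration inside $C$, and the cross-walk interactions only need to be controlled \emph{inside} $C$ (where the earlier forests agree by the induction hypothesis in its localized, in-probability form), not globally; one also needs the easy observation that $T_{j,n}\wedge\tau^j_n\to\infty$ in probability. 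This yields agreement of $\cF_J(G_n)$ and $\cF_J$ on $B$ with probability tending to $1$, which is all that is required to identify the law of $\cF$ with $\mu$. Supplying these localized events is the actual content of the proof, and it is what is missing from your write-up.
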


\section{Infinite volume limits --- single tree}
\label{sec:limitsI}

Let $G = (V,E)$ be an infinite locally finite graph
as in Section \ref{sec:WSF}. 
Let $V_1 \subset V_2 \subset \dots \subset V$ be an
exhaustion, and recall the wired graph $G_n = (V_n^+,E_n)$. 
In this section we assume that the WSF of $G$
is $\mu$-a.s.~connected and has one end. 
By Theorem \ref{thm:Pemalt}(i), this includes 
$\Z^d$ with $2 \le d \le 4$. The theorem below 
was proved in \cite{AJ04}. There it was stated
in the case of $\Z^d$, $2 \le d \le 4$, however, 
the proof there directly applies to the more general setting.
Nevertheless, below we present a somewhat different proof,
based on coupling. Let us write 
\eqnst
{ \Om_G 
  := \prod_{x \in V} \{ 0, \dots, \deg_G(x) - 1 \}. }

\begin{theorem}[Athreya, J\'arai \cite{AJ04}]
\label{thm:AJlow}
Let $G = (V,E)$ be an infinite locally finite graph.
Suppose that the WSF of $G$ is $\mu$-a.s.~connected
and has one end. There exists a measure $\nu$ on $\Om_G$
such that $\nu_{G_n} \Rightarrow \nu$, independently 
of the exhaustion.
\end{theorem}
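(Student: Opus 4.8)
The plan is to prove convergence by coupling the uniform spanning trees on the wired graphs $G_n$ with the WSF of $G$, and then transporting this coupling through the inverse bijection $\phi_{G_n}$. The starting point is Theorem~\ref{thm:Pem}: $\mu_{G_n} \Rightarrow \mu$ independently of the exhaustion, and since the state space $\{0,1\}^E$ is a compact metric space, Skorokhod's representation theorem gives a single probability space carrying spanning trees $t_n$ of $G_n$ and a WSF sample $\mathfrak{t}$, with $t_n \to \mathfrak{t}$ in the product topology, i.e.\ for every edge $e$ eventually $\mathbf{1}[e \in t_n] = \mathbf{1}[e \in \mathfrak{t}]$. By Corollary~\ref{cor:uniform}, $\phi_{G_n}(t_n)$ has law $\nu_{G_n}$. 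I would define $\eta^{(n)} := \phi_{G_n}(t_n) \in \Om_{G_n}$ and aim to show that for each fixed $x \in V$, the coordinate $\eta^{(n)}_x$ converges almost surely; its limiting law is then forced to be the same for every exhaustion (since $\mu$ is), and an application of the bounded convergence theorem to finite-dimensional marginals together with compactness of $\Om_G$ yields a limit measure $\nu$ independent of the exhaustion.

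The crux is the almost-sure convergence of $\eta^{(n)}_x$, and this is exactly where Lemma~\ref{lem:depend} and the one-end hypothesis enter. By Lemma~\ref{lem:depend}(iii), $\eta^{(n)}_x$ is determined by the cardinality of $\{y \sim x : d_{t_n}(s,x) - d_{t_n}(s,y) \ge 1\}$, the set $\{y \sim x : d_{t_n}(s,x) - d_{t_n}(s,y) = 1\}$, and the outgoing edge $e_x$ of $x$ in $t_n$. Equivalently (via Lemma~\ref{lem:depend}(i)) it depends on $t_n$ only through the rooted finite subtree $(F_x, v_x)$, the first-meeting point of the paths-to-sink of $x$ and its neighbours. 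So I must show: (a) the edges $\{e_y : y \in \mathcal{N}_x\}$ of $t_n$ stabilise as $n \to \infty$ (they do, since $t_n \to \mathfrak{t}$ edgewise and each vertex has exactly one outgoing edge), and (b) the paths $\pi^{t_n}_y$ from $y \in \mathcal{N}_x$ to $s$ in $t_n$ coalesce ``in the same combinatorial pattern'' for large $n$, so that the relative distances $d_{t_n}(s,x) - d_{t_n}(s,y)$ stabilise. For (b) I would use that the WSF is connected and one-ended: in $\mathfrak{t}$, the paths from $x$ and from each neighbour $y$ eventually merge into a single infinite ray; let $w$ be the point where all of $\{\pi^{\mathfrak{t}}_y : y \in \mathcal{N}_x\}$ have coalesced. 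The finite rooted tree $\Gamma$ spanned by $x$, its neighbours, and the coalescence structure up to $w$ uses only finitely many edges, all of which lie in $t_n$ for $n$ large; and one-endedness is what guarantees there is a genuine common meeting point rather than the paths wandering off separately. Then for $n$ large the portion of $t_n$ relevant to $\eta^{(n)}_x$ agrees with $\Gamma$, and in particular each difference $d_{t_n}(s,x) - d_{t_n}(s,y)$ equals the corresponding difference computed inside $\Gamma$ (relative distances along coalescing paths do not change once the branch structure below $x$ and $y$ is fixed, even though the absolute distance to $s$ blows up). Hence $\eta^{(n)}_x$ is eventually constant, equal to a value determined by $\mathfrak{t}$ alone.

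The main obstacle is making step~(b) rigorous: one must argue carefully that edgewise convergence $t_n \to \mathfrak{t}$ plus one-endedness forces the finite ``relevant neighbourhood'' (the rooted tree $(F_x,v_x)$ of Lemma~\ref{lem:depend}, read off from $t_n$) to converge to the corresponding finite rooted tree read off from $\mathfrak{t}$. The subtlety is that $(F_x, v_x)$ is defined using distances to $s$, which is a moving target, and in $t_n$ a priori the path from some neighbour $y$ could take a long excursion before meeting the path from $x$; connectedness gives a meeting point in the limit and one-endedness gives that this meeting point is at bounded distance in $\mathfrak{t}$, but one needs edgewise convergence on a large enough (but fixed, $n$-independent) finite edge set to conclude the same pattern holds in $t_n$ — here one picks the finite edge set determined by $\mathfrak{t}$ first, then takes $n$ large. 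Once this stabilisation is established, the rest is routine: define $\nu$ as the law of $(\eta_x)_{x\in V}$ where $\eta_x = \phi$-value computed from $\mathfrak{t}$ via Lemma~\ref{lem:depend}, observe it does not depend on the exhaustion because $\mu$ does not, and conclude $\nu_{G_n} \Rightarrow \nu$ from convergence of all finite-dimensional marginals on the compact space $\Om_G$.
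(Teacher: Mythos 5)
Your overall architecture is sound and in fact quite close to the paper's own proof (both reduce the theorem, via Lemma \ref{lem:depend}, to showing that the finite rooted tree $(F_{n,x},v_{n,x})$ read off from $t_n$ stabilises to the corresponding object read off from the WSF sample $\mathfrak{t}$), but the step you yourself single out as the crux is not actually carried out, and the way you sketch it would fail. You propose to take as the ``large enough finite edge set'' the tree $\Gamma$ spanned by $x$, its neighbours and their coalescence structure up to the meeting point $w=v_x$, and to argue that since these edges all lie in $t_n$ for large $n$, the portion of $t_n$ relevant to $\eta^{(n)}_x$ agrees with $\Gamma$. This inference is invalid: knowing that the edges of $\Gamma$ are \emph{present} in $t_n$ does not determine which edge at $x$ (or at a neighbour $y$) is the one oriented towards the sink, nor that the paths $\pi_{n,y}$ follow $\Gamma$ and meet at $v_x$. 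For instance, $t_n$ may in addition contain an edge from some vertex $u$ with $u \preceq v_x$ (say a child of $x$ in $\mathfrak{t}$) to a vertex outside the past of $v_x$, and then the unique path from $x$ to $s$ in $t_n$ can leave through that edge without ever reaching $v_x$, even though all of $\Gamma$ lies in $t_n$; in that case $e_x$, hence $\eta_{n,x}$, differs from the value computed from $\mathfrak{t}$. The same objection applies to your step (a): the justification ``each vertex has exactly one outgoing edge'' presupposes that you already know the orientation towards $s$, which is a global datum, not a function of the edge statuses near $x$.

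What is needed — and what the paper isolates as Lemma \ref{lem:determine} — is a determination statement for a larger set: let $D_x$ consist of \emph{all} edges of $G$ incident to some vertex $u$ with $u \preceq v_x$ (the entire past of $v_x$, a.s.\ finite by one-endedness), not merely the edges of $F_x$; then on the event $D_x \subset K$ the pair $(F_x,v_x)$, and likewise its finite-volume analogue, is a function of the full status $(K \cap t, K \setminus t)$ of the edges of $K$. In other words one must use the \emph{absence} from $t_n$ of the non-tree edges of $D_x$ as well as the presence of the tree edges: once every $t_n$-edge touching the past of $v_x$ coincides with the corresponding $\mathfrak{t}$-edge, the path in $t_n$ from any $y \in \cN_x$ to $s$ is forced to climb through $v_x$, and the relative distances stabilise. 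Your Skorokhod coupling does deliver eventual agreement of the full status on any fixed finite edge set, so once this lemma is supplied your route goes through, and it is a genuinely attractive variant: it replaces the paper's explicit Wilson-method coupling (with its separate recurrent and transient arguments) by the abstract coupling furnished by Theorem \ref{thm:Pem}. As written, however, the decisive lemma is only gestured at, and with the wrong finite set, so the proof has a genuine gap at its centre.
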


Before proving Theorem \ref{thm:AJlow}, let us comment 
on when the assumptions are satisfied.
If $G = (V,E)$ is any graph, an \emph{automorphism} of 
$G$ is a bijection $\varphi : V \to V$, such that 
$\{ x, y \} \in E$ if and only if 
$\{ \varphi(x), \varphi(y) \} \in E$.
We say that $G$ is \emph{vertex-transitive}, 
if for any $x, y \in V$ there exists an automorphism
that takes $x$ to $y$.

Suppose that $G = (V,E)$ is a locally finite 
(vertex)-transitive graph. Let $o \in V$ be a 
fixed vertex of $G$, and let 
$v_n$ be the number of vertices of $G$ with distance
at most $n$ from $o$. It was shown by Lyons, Peres and 
Schramm \cite[Corollary 5.3]{LPS03} and 
BLPS \cite[Corollary 9.6]{BLPS01} that if $v_n \le c n^4$,
then the WSF is a.s.~connected. Regarding the number
of ends, it was shown by BLPS \cite[Theorem 10.3]{BLPS01}
that if $G$ is transitive and transient and the WSF has
a single tree a.s., then that tree has one end a.s. This
was further generalized by Lyons, Morris and Schramm
\cite[Theorem 7.1]{LMS08} who gave a sufficient condition 
in terms of the isoperimetric profile of the graph,
without assuming transitivity. Regarding the recurrent case, 
it was shown in \cite[Theorem 10.6, Proposition 10.10]{BLPS01} 
that in a recurrent transitive graph $G$, the WSF has
one end a.s.~unless $G$ is roughly isometric to $\Z$.

\begin{proof}[Proof of Theorem \ref{thm:AJlow}]
Fix $x \in V$. We will use a subscript $n$ for
objects associated with the graph $G_n$. In particular,
we write $F_{n,x}$, $v_{n,x}$, etc.~for 
the data associated to a $t_n \in \cT_{G_n}$ 
appearing in Lemma \ref{lem:depend}.

Write $\cT_G$ for the set of spanning
trees of $G$ with one end, and let $t \in \cT_G$. 
Due to the one end property, we can think of each 
edge of $t$ being directed towards infinity.
For $u, v \in V$ let us write $u \preceq v$ if there
is a directed path (possibly of length $0$) from 
$u$ to $v$ in $t$, and write $u \prec v$ in the
case when $u \not= v$.
For $y \in \cN_x$ let $\pi_y$ denote the unique 
infinite directed path in $t$ starting at $y$. 
Let $v_x \in V$ be the unique vertex such that
$v_x \in \pi_y$ for all $y \in \cN_x$, and $v_x$ 
is minimal with respect to the relation $\preceq$
(the ``first meeting point''). 
Let us write $\vec{F}_x$ for the following directed 
subtree of $t$:
\eqnst
{ \vec{F}_x 
  := \left\{ e \in t : 
     e_- \in \cup_{y \in \cN_x} \pi_y,\, 
     e_+ \preceq v_x) \right\}. }
Each edge in $\vec{F}_x$ is directed towards $v_x$,
so specifying $\vec{F}_x$ is equivalent to specifying
the undirected, rooted tree $(F_x,v_x)$.

We now define a mapping $\phi_G : \cT_G \to \Om_G$. Let 
\eqnsplst
{ n_x
  &= | \{ y : y \sim x,\, d_{F_x}(v_x,y) < d_{F_x}(v_x,x) \} |, \\
  P_x
  &= \{ e \in \vec{E} : 
     e_- = x,\, d_{F_x}(v_x,e_+) = d_{F_x}(v_x,x) - 1 \}, \\ 
  K_x
  &= \{ \deg_G(x) - n_x, \dots, \deg_G(x) - n_x + |P_x|-1 \}. }
Let $e_x$ be the unique edge of $t$ satisfying $(e_x)_- = x$. 
Set $\eta_x := \alpha_{P_{x}, K_{x}} (e_x)$, $x \in V$.
Define $\phi_G(t) := \eta$, and let $\nu$ be the image
of $\mu$ under the map $\phi_G$.

We show that $\nu_n \Rightarrow \nu$. In fact, we 
consider a coupling between the measures $\mu_n$ and 
$\mu$, with the following property. 
With $\eta_n = \phi_{G_n}(t_n)$ and 
$\eta = \phi_G(t)$, for all finite $A \subset V$ we have 
\eqn{e:agree}
{ \lim_{n \to \infty} \P [ \eta_{n,x} = \eta_x,\, x \in A ] 
  = 1. }
This clearly implies weak convergence.

We first consider the case when $G$ is recurrent.
For any $x \in V$ let
\eqnsplst
{ D_x
  &:= \{ e \in E : \text{$\exists u \in V$ incident
      with $e$ such that $u \preceq v_x$} \}. }  
Due to the assumption on one end, $D_x$ is 
$\mu$-a.s.~finite. 

\begin{lemma} 
\label{lem:determine}
Let $K \subset E$ be a fixed finite set of edges.
On the event $D_x \subset K$, the value of
$(F_x,v_x)$, and hence of $\eta_x = (\phi_G(t))_x$
is determined by the status of the edges in $K$,
that is by the pair $(K \cap t, K \setminus t)$.
Similarly, on the event $D_{n,x} \subset K$, the
value of $(F_{n,x},v_{n,x})$, and hence the value 
of $\eta_{n,x} = (\phi_{G_n}(t_n))_x$ is determined
by the status of the edges in $K$.
\end{lemma}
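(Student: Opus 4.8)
The statement is really a determinacy/measurability assertion: once we know the status (in $t$ or not in $t$) of every edge in a finite set $K$ that contains $D_x$, we have enough information to reconstruct $(F_x, v_x)$ and therefore $\eta_x$. The strategy is to unwind the definitions layer by layer, showing that each ingredient entering $\phi_G$ — the infinite directed paths $\pi_y$ for $y \in \cN_x$, the meeting point $v_x$, the rooted tree $(F_x, v_x)$, and finally $e_x$ — depends on $t$ only through the restriction $t \cap K$, on the event $\{ D_x \subset K \}$.

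\medbreak

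First I would observe that $D_x$, by its definition, contains every edge incident to a vertex $u$ with $u \preceq v_x$; in particular it contains all of $\vec F_x$ (all edges of $\vec F_x$ have their tail on some $\pi_y$ below $v_x$, hence incident to a vertex $\preceq v_x$) and it contains the edge $e_x$ (since $(e_x)_- = x \preceq v_x$). So on $\{D_x \subset K\}$ we automatically have $\vec F_x \subset K$ and $e_x \in K$. The real content is the converse direction: that no information \emph{outside} $K$ is needed. Here the key point is that the portions of the paths $\pi_y$ that matter — namely the segments from $y$ up to $v_x$ — lie entirely within $D_x$: walking from $y \in \cN_x$ along directed edges of $t$, each such edge has tail $\preceq v_x$ (as long as we have not yet passed $v_x$) and is therefore in $D_x \subset K$. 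Thus, starting from each $y \in \cN_x$, following the unique out-edge in $t \cap K$ is well-defined and traces out $\pi_y$ until the paths coalesce; the coalescence vertex is $v_x$, and this whole computation reads off only $t \cap K$. Having reconstructed $(F_x, v_x)$, the quantities $n_x$, $P_x$, $K_x$ are explicit functions of it (they only involve $d_{F_x}(v_x, \cdot)$ on neighbours of $x$), $e_x$ is the unique out-edge of $x$ in $t \cap K$, and $\eta_x = \alpha_{P_x, K_x}(e_x)$ follows. The $G_n$ case is identical, with $s$ playing the role of infinity: $D_{n,x}$ again contains $\vec F_{n,x}$ and $e_{n,x}$, and the path segments from $\cN_x$ to $v_{n,x}$ in $t_n$ consist of edges with tail below $v_{n,x}$, hence in $D_{n,x} \subset K$.

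\medbreak

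The one subtlety worth stating carefully — and the place I expect to spend the most care rather than the most effort — is \emph{why following out-edges within $t \cap K$ actually reproduces $\pi_y$ rather than terminating prematurely or going astray}: a priori $K$ could contain some but not all out-edges of a path in $t$. This is handled precisely by the fact that the relevant initial segment of each $\pi_y$ lies in $D_x$: for a vertex $u$ with $u \prec v_x$, its unique $t$-out-edge $e$ has $e_- = u \preceq v_x$, so $e \in D_x \subset K$, and we can continue; once we reach $v_x$ all the paths have met and we stop. So the procedure ``start at $y$, repeatedly take the unique edge of $t \cap K$ leaving the current vertex, until the $|\cN_x|$ walks have all coalesced'' is well-defined and equals the construction of $(F_x, v_x)$; its output depends on $t$ only through $(K \cap t, K \setminus t)$. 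This gives the lemma.
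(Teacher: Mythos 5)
There is a genuine gap, and it sits exactly at the point the lemma is really about. Your reconstruction procedure --- ``start at each $y \in \cN_x$ and repeatedly take the unique out-edge in $t \cap K$ until the walks coalesce'' --- is not an operation on the data $(K \cap t, K \setminus t)$: to know which edge at the current vertex is the out-edge you need the orientation of $t$, and that orientation is defined globally (towards the unique end of $t$, resp.\ towards the sink in $G_n$), not by the undirected status of the edges inside $K$. A priori, two trees $t, t'$ with $D_x(t), D_x(t') \subset K$ and $K \cap t = K \cap t'$ could orient the edges of $K$ differently, i.e.\ could have different roots $v_x$, and since $\eta_x$ depends on $(F_x,v_x)$ as a \emph{rooted} tree, this would change the answer. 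Ruling this out is precisely the content of the lemma, and your argument assumes it rather than proves it; the sentence ``this whole computation reads off only $t \cap K$'' is where the gap is hidden. The issue is not cosmetic: in the application, $t$ is oriented towards infinity while $t_n$ is oriented towards the sink $s$, and all that the coupling provides is that their undirected traces on $B$ coincide, so the root really must be recovered from the undirected data by one and the same rule in both graphs. Your (correct) observation that the segments of the $\pi_y$ up to $v_x$, and the edge $e_x$, all lie in $D_x \subset K$ shows that the relevant information is \emph{contained} in $K$; containment alone does not give determinacy.

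What is missing is an identification of $v_x$ (equivalently, of the orientation of $F_x$) from $(K \cap t, K \setminus t)$, and this is what the paper's proof supplies. First, $F_x$ is recovered with no orientation at all, as the smallest connected set of edges of $K \cap t$ containing all vertices of $\cN_x$. Then $v_x$ is characterized as the unique vertex of $F_x$ from which there is a path in $K \cap t$, edge-disjoint from $F_x$, to the vertex boundary of $K$: the ray from $v_x$ to infinity (resp.\ the path to $s$) provides such a path, while for any other vertex $v$ of $F_x$ such a path must leave $v$ along an in-edge (the out-edge of every non-root vertex of $F_x$ lies in $F_x$), hence descends in $t$; by induction all its vertices are $\preceq v_x$ and all its edges lie in $D_x$, and then the edge outside $K$ incident to its final vertex would also belong to $D_x \subset K$, a contradiction. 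Your argument could be repaired by adding a root-identification step of this kind, but as written it does not establish the lemma.
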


\begin{proof}[Proof of Lemma \ref{lem:determine}]
All edges of $F_x$ belong to $D_x \cap t$ and 
hence to $K \cap t$. Therefore, $F_x$ is determined 
as the smallest connected set of edges in $K \cap t$ 
containing all vertices of $\cN_x$. It remains to show 
that $v_x$ is also determined. 

We claim that $v_x$ is
the unique vertex belonging to $F_x$ such that there 
exists a path in $K \cap t$ from $v_x$ to the vertex 
boundary of $K$ that is edge-disjoint from $F_x$.
First note that $v_x$ satisfies the requirement, by
virtue of the path in $t$ from $v_x$ to infinity.
Suppose $v \not= v_x$ was another such vertex, and let
$f_1, \dots, f_L \in K \cap t$ be a path from $v$
to the vertex boundary of $K$ that is disjoint from $F_x$. 
By the definition of $D_x$ and induction, we have 
$f_j \in D_x \cap t$, $j = 1, \dots, L$. Let $f \not\in K$ 
be an edge that shares an endvertex with $f_L$. 
The common endvertex of $f_L$ and $f$, call it $u$, 
satisfies $u \prec v_x$. Hence we get $f \in D_x \subset K$, 
a contradiction.
\end{proof}

We continue the proof of \eqref{e:agree} (in the case
whan $G$ is recurrent). Fix $\eps > 0$.
Choose $B \subset E$ a large enough finite set, so that 
\eqn{e:largeB}
{ \P [ \cup_{x \in A} D_x \subset B ] 
  > 1 - \eps. }
Assume $n$ is large enough so that $E_n \supset B$.
The following coupling between $\mu_n$ and $\mu$ is
due to BLPS \cite[Proposition 5.6]{BLPS01}.
Let $u_1, \dots, u_K$ be an enumeration of all vertices
incident with the edges in $B$. We use Wilson's method to 
generate samples $t_n$ (resp.~$t$) from $\mu_n$ (resp.~$\mu$),
where the enumeration of vertices starts with $u_1, \dots, u_K$,
and the root is some fixed vertex $r \in A$. The same random 
walks are used in the case of $G_n$ and $G$, up to the first 
time $\tau_n^j$ when the walk crosses an edge between $V_n$ 
and the sink $s$. After time $\tau_n^j$, the construction on $G_n$ is 
continued using an independent simple random walk on $G_n$ 
started at $s$. Due to recurrence, for large enough $n$,
\eqn{e:noexit}
{ \P [ \text{$\tau_n^j > T^j$ for $j = 1, \dots, K$} ] 
  > 1 - \eps. }
If the event in \eqref{e:noexit} occurs, the status of all edges
in $B$ are the same for $t_n$ and $t$. When the event
in \eqref{e:largeB} also occurs, Lemma \ref{lem:determine},
Lemma \ref{lem:depend}(i), and the definitions of $\phi_G$ 
and $\phi_{G_n}$ imply that $(\phi_G(t))_x = (\phi_{G_n}(t_n))_x$ for 
all $x \in A$. Since $\eps$ was arbitrary, this 
proves \eqref{e:agree} in the recurrent case.

When $G$ is transient, the proof is fairly similar, and
somewhat simpler. This time, we let $u_1, \dots, u_K$ be
an enumeration of $\cup_{x \in A} \cN_x$. On $G$ we use Wilson's 
method rooted at infinity, and on $G_n$ we use it with root
equal to the sink $s$. The constructions use the same 
random walks $S^j$, up to the first exit time $\tau_n^j$
from $V_n$ for $j = 1, \dots, K$. Given $\eps > 0$, 
let $C \subset E$ be a large enough finite set so that 
\eqn{e:contained}
{ \P [ \cup_{x \in A} F_x \subset C ] 
  > 1 - \eps. }
Let $\hat{\tau}^j_C$ be the time of the last visit to $C$
by $S^j$. Due to transience, $\hat{\tau}^j_C < \infty$ a.s.
It follows, using transience again, that if $n$ is large 
enough
\eqn{e:notback}
{ \P \left[ S^j[\tau^j_n, \infty) \cap S^j[0,\hat{\tau}^j_C] 
     = \es,\, j = 1, \dots, K \right]
  > 1 - \eps. }
Note that on the event in \eqref{e:notback}, using the
notation from Section \ref{sec:Wilson}, we have
$\cF_K \cap C = \cF_K^n \cap C$. When the event 
in \eqref{e:contained} also occurs, we have 
$(F_x,v_x) = (F_{n,x}, v_{n,x})$, $x \in A$. This implies, due to 
Lemma \ref{lem:depend}(i) and the definitions of
$\phi_G(t)$ and $\phi_{G_n}(t_n)$,
that $\eta_x = \eta_{n,x}$, $x \in A$. Since $\eps$ was
arbitrary, we obtain \eqref{e:agree} in the transient
case.
\end{proof}

Uniqueness of the limit in Theorem \ref{thm:AJlow} can fail,
if the assumption on one end is dropped. The following theorem
was proved in \cite{JL07}. Let $G_0$ be a connected finite
graph. Let $G$ be the product 
$\Z \times G_0$, that is, $(n_1, u_1)$ and $(n_2, u_2)$
are connected by an edge, if either $n_1 = n_2$ and 
$u_1 \sim u_2$ in $G_0$, or if $u_1 = u_2$ and 
$|n_1 - n_2| = 1$. Write $G_{n,m}$ for the wired graph
associated to $\{n, n+1, \dots, m-1, m\} \times G_0$.

\begin{theorem}[J\'arai, Lyons \cite{JL07}]
\label{thm:JL}
If $G_0$ has at least two vertices, then 
$\{ \nu_{G_{n,m}} : n < 0,\, m > 0 \}$ has precisely
two ergodic weak limit points. 
\end{theorem}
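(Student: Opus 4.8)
The plan is to combine the Majumdar--Dhar bijection (Corollary~\ref{cor:uniform}) with a sharp description of the WSF on $G = \Z \times G_0$.  Since $G$ is recurrent (it is roughly isometric to $\Z$ when $G_0$ is finite, or more directly its simple random walk is recurrent), the WSF on $G$ is a single spanning tree $\mathfrak{T}$ distributed according to $\mu$, but by the results quoted in Section~\ref{sec:limitsI} (the ``unless $G$ is roughly isometric to $\Z$'' clause of BLPS), this tree has \emph{two} ends a.s.  The first step is therefore to analyze the geometry of $\mathfrak{T}$: because $G$ has two ends, $\mathfrak{T}$ has a.s.\ a unique bi-infinite self-avoiding path (its ``trunk''), and removing this trunk leaves finite bushes hanging off it.  The trunk must escape to $-\infty$ on one side and $+\infty$ on the other.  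One then orients $\mathfrak{T}$: pick the point on the trunk ``at infinity'' on one side, say $+\infty$, as a root, so every edge points toward $+\infty$ along the trunk and toward the trunk in each bush.  There is a genuine binary choice here --- orient toward $+\infty$ or toward $-\infty$ --- and this is the source of the two limit points.

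The second step is to build the two candidate limit measures.  For $\epsilon \in \{+,-\}$, let $\nu^\epsilon$ be the image under a map $\phi_G^\epsilon$ (defined exactly as in the proof of Theorem~\ref{thm:AJlow}, using the $\epsilon$-orientation of $\mathfrak{T}$) of the WSF measure $\mu$.  By the argument of Theorem~\ref{thm:AJlow} --- Lemma~\ref{lem:depend}(i), Lemma~\ref{lem:determine}, and Wilson's-method coupling of $\mu_{G_{n,m}}$ with $\mu$ --- one gets, \emph{conditionally on the orientation being $\epsilon$}, that the finite-volume sandpile measures converge to $\nu^\epsilon$.  Concretely: if $m \to \infty$ much faster than $|n| \to \infty$, the root (sink) of $G_{n,m}$ sits near $+\infty$, the induced orientation of $t_{n,m}$ is the $+$-orientation on any fixed finite window, and the coupling shows $\nu_{G_{n,m}} \Rightarrow \nu^+$ along that subsequence; symmetrically $\nu_{G_{n,m}} \Rightarrow \nu^-$ if $|n|$ grows much faster than $m$.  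This shows $\nu^+$ and $\nu^-$ are both weak limit points.  Each is ergodic under the $\Z$-shift: the shift acts ergodically on $\mathfrak{T}$ (this is standard for the WSF on $\Z \times G_0$, e.g.\ by tail triviality / the fact that $\mu$ is a factor of i.i.d.\ via Wilson's algorithm), and $\phi_G^\epsilon$ is shift-equivariant, so $\nu^\epsilon$ is ergodic.  They are distinct because $G_0$ has at least two vertices: on a fixed ``rung'' $\{k\} \times G_0$ the $+$- and $-$-orientations of the trunk point in opposite directions, and one can exhibit a local event (e.g.\ comparing heights at two sites of a rung straddling the trunk) of different probability under $\nu^+$ and $\nu^-$.

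The third step is the harder direction: showing there are \emph{no other} ergodic weak limit points.  Suppose $\nu_{G_{n_k, m_k}} \Rightarrow \rho$ along some sequence with $n_k \to -\infty$, $m_k \to +\infty$.  The key point is that in each finite-volume tree $t_{n_k,m_k}$, the sink $s$ is a single vertex that wires together \emph{both} ends $\{n_k\}\times G_0$ and $\{m_k\}\times G_0$; the wired tree $t_{n_k, m_k}$ converges (in the local-weak sense, via Wilson/Pemantle) to the WSF tree $\mathfrak{T}$ on $G$, but the \emph{orientation} of $t_{n_k,m_k}$ on a fixed finite window $[-R,R]\times G_0$ is determined by which of the two ends the unique path-to-sink from that window exits through --- and this is a $\{+,-\}$-valued random variable measurable w.r.t.\ $\mathfrak{T}$ (it is the direction along the trunk from the window).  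So $\rho$ is a mixture $p\,\nu^+ + (1-p)\,\nu^-$ for some $p = p(n_k,m_k$ along the subsequence$) \in [0,1]$; ergodicity of $\rho$ forces $p \in \{0,1\}$, i.e.\ $\rho \in \{\nu^+, \nu^-\}$.  The main obstacle is making this ``orientation on a window is determined by $\mathfrak{T}$ and takes only two values in the limit'' statement rigorous: one must rule out that the path from the window to the sink behaves erratically (e.g.\ the ``first meeting point'' $v_x$ escaping to infinity, or the relevant edge $e_x$ failing to stabilize), which is exactly where one uses the two-ends structure of $\mathfrak{T}$ --- the trunk is a.s.\ locally finite and every bush is a.s.\ finite, so $(F_x, v_x)$ and $e_x$ stabilize $\mu$-a.s.\ once $R$ is large, and the coupling of $t_{n_k,m_k}$ with $\mathfrak{T}$ transfers this to large finite volumes.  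Assembling these, every ergodic limit point equals $\nu^+$ or $\nu^-$, completing the proof.
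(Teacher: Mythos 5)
This theorem is only quoted in the present paper (it is proved in \cite{JL07} by a detailed analysis of the uniform spanning tree on the strip), so there is no in-paper proof to compare with; judging your sketch on its own, the overall architecture (two orientations of the two-ended WSF trunk give two candidate measures $\nu^{\pm}$; every limit point should be a mixture; ergodicity then leaves only the two pure measures) is the right one, but the central step is not correct as you state it. You claim that the orientation of $t_{n_k,m_k}$ on a fixed window --- i.e.\ which end the path to the sink exits through --- is ``a $\{+,-\}$-valued random variable measurable w.r.t.\ $\mathfrak{T}$''. It is not: the limiting tree $\mathfrak{T}$ has two ends and carries no preferred direction, and the exit direction in finite volume is determined by global information (the geometry near the two boundaries) which is exactly what is lost in the local limit. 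Because of this, the local coupling of $t_{n_k,m_k}$ with $\mathfrak{T}$ plus stabilization of $(F_x,v_x)$ and $e_x$ (which you present as the ``main obstacle'') does not yield that $\rho=p\,\nu^+ +(1-p)\,\nu^-$. What is actually needed, and what constitutes the substance of the theorem, is a statement of the form: the pair (window tree structure, exit direction) converges jointly, with the conditional law of the window structure given either direction converging to the WSF local law --- i.e.\ asymptotic independence of the direction from the window, or some substitute for it. This is where \cite{JL07} does real work (exploiting the quasi-one-dimensional, transfer-matrix/Markovian structure of the spanning tree on $\Z\times G_0$), and your sketch simply assumes it.

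Two further points. First, the distinctness of $\nu^+$ and $\nu^-$, which is precisely where the hypothesis $|V(G_0)|\ge 2$ enters (for $G_0$ a single vertex the two measures coincide and the limit is trivial), is asserted (``one can exhibit a local event'') but not established; since $\nu^-$ is the reflection image of $\nu^+$, you must actually produce a cylinder event whose probability is not reflection-symmetric, which requires a genuine computation with the bijection of Lemma~\ref{lem:depend}. Second, your identification of which asymmetric subsequence produces which pure measure rests on a wrong picture: the sink of $G_{n,m}$ is wired to \emph{both} boundary rungs, so it does not ``sit near $+\infty$'' when $m\gg|n|$; in that regime the path from a fixed window in fact exits through the \emph{near} (left) boundary with probability tending to $1$, by a hitting-probability estimate for the recurrent walk on the strip. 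This last issue is only a matter of labels and is fixable, but it should be argued via harmonic-measure estimates rather than the heuristic you give; the first two gaps are the ones that would have to be filled before the sketch becomes a proof.
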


\begin{remark}
Here the WSF on $G$ has two ends a.s., as can be seen by
using Wilson's method, Theorem \ref{thm:Wilsonrec}. 
Therefore, the conditions of Theorem \ref{thm:AJlow} are 
not satisfied. It is a natural question whether this is
the only thing that can go wrong with the existence of
a unique limit. If the answer is yes, this would solve
Open question \ref{open:general limit}. 
\end{remark}

\section{Infinite volume limits --- multiple trees}
\label{sec:limitsII}

\subsection{Statement of result}
\label{ssec:result}

In this section we will be interested in graphs
where the WSF is not a single tree. Theorem \ref{thm:Pemalt}(ii) 
states that this is the case when $G$ is the $\Z^d$ lattice 
for $d \ge 5$. The method of proof of Theorem \ref{thm:AJlow} 
breaks down in this case, because with probability
bounded away from $0$, $v_{n,x}$ equals the sink, and hence
$\{ (F_{n,x},v_{n,x}) \}_{n \ge 1}$ is not tight. 
The following theorem was proved in \cite{AJ04} 
in the case when the exhaustion satisfies a regularity property. 
The restriction on the exhaustion was removed 
in \cite[Appendix]{JR08}, using the result of \cite{JK04}. 

\begin{theorem}[{Athreya, J\'arai \cite{AJ04}; J\'arai, Redig \cite{JR08}}]
\label{thm:AJhigh}
Consider the $\Z^d$ lattice with $d \ge 5$, and let 
$V_1 \subset V_2 \subset \dots \subset \Z^d$ be any exhaustion.
There exists a measure $\nu$ on $\Om_{\Z^d}$ such that 
$\nu_n \Rightarrow \nu$, independently of the exhaustion.
\end{theorem}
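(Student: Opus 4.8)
The plan is to combine the Majumdar--Dhar bijection with Wilson's method, organised around a coupling with the WSF of $\Zd$, and to isolate the one place where the argument behind Theorem \ref{thm:AJlow} breaks down: the relative burning times of neighbouring vertices lying in \emph{distinct} components of the WSF. By Corollary \ref{cor:uniform}, $\nu_n=\phi_{G_n}(\mu_{G_n})$, so it suffices to show that for every finite $A\subset\Zd$ and every $(a_x)_{x\in A}$ the probability $\P[\eta_{n,x}=a_x \text{ for all }x\in A]$ converges to a limit independent of the exhaustion; these numbers are then consistent and define $\nu$. By Lemma \ref{lem:depend}(iii), $\eta_{n,x}$ depends on $t_n$ only through $n_{n,x}:=\#\{y\sim x:d_{t_n}(s,y)<d_{t_n}(s,x)\}$, the set $P_{n,x}$ of neighbours $y$ with $d_{t_n}(s,y)=d_{t_n}(s,x)-1$, and the outgoing edge $e_{n,x}$. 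I would couple all the $t_n$ with the WSF $\mathfrak F$ of $\Zd$ by running Wilson's method from one common family of random walks --- rooted at $s$ in the finite graph $G_n$ (Theorem \ref{thm:Wilson}) and rooted at infinity in $\Zd$, which is transient for $d\ge5$ (Theorem \ref{thm:Wilsontrans}) --- placing the vertices of $\bigcup_{x\in A}\cN_x$ first in the enumeration, exactly as in the BLPS coupling used in the proof of Theorem \ref{thm:AJlow}.

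The ``local'' part then goes through essentially as in that proof. For $x\in A$, the edge $e_{n,x}$ is the first step of the loop-erased walk from $x$, hence $e_{n,x}\to e_{\infty,x}$ $\P$-a.s., with $(e_{\infty,x})_+$ the parent of $x$ in $\mathfrak F$. If $y\sim x$ lies in the \emph{same} component of $\mathfrak F$ as $x$, the two infinite loop-erased paths coalesce at a finite vertex $w$, since every component has one end (Theorem \ref{thm:Pemalt}(ii)); for $n$ large the loop-erased walks from $x$ and $y$ in $G_n$ agree with those in $\Zd$ up to and beyond $w$, so they coalesce at the same $w$ in $t_n$, and then $d_{t_n}(s,x)-d_{t_n}(s,y)=d_{t_n}(w,x)-d_{t_n}(w,y)$ is a local quantity that has already stabilised. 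Consequently the contribution of same-component neighbours to $n_{n,x}$, and the indicators $\mathbf 1[y\in P_{n,x}]$ for same-component $y$, converge $\P$-a.s.

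The main obstacle is the ``cross-tree'' contribution, that is a neighbour $y\sim x$ in a different component of $\mathfrak F$. The key fact one must establish is a scaling-limit statement for the lengths of the loop-erased walks produced by Wilson's method: when rescaled by the scale on which a random walk from $x$ leaves $V_n$, the depth $d_{t_n}(s,x)$, and more generally the joint vector of depths of the vertices of $\bigcup_{x\in A}\cN_x$ that are pairwise in distinct $\mathfrak F$-components, converge in law to a \emph{non-atomic} limit, uniformly in the exhaustion. Granting this, two consequences follow. First, $\P[\,|d_{t_n}(s,x)-d_{t_n}(s,y)|\le1\,]\to 0$, so a cross-tree neighbour is absent from $P_{n,x}$ in the limit; together with the previous paragraph this gives $\P$-a.s.\ convergence of $P_{n,x}$ and $e_{n,x}$. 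Second, the \emph{sign} of $d_{t_n}(s,x)-d_{t_n}(s,y)$ --- which is exactly what decides whether $y$ is counted in $n_{n,x}$ --- does \emph{not} converge $\P$-a.s., but the joint law of these signs over cross-tree pairs does converge, so $n_{n,x}$, and hence $\eta_{n,x}$, converges in distribution jointly over $x\in A$. (This is genuinely weaker than the coupling in Theorem \ref{thm:AJlow}, and is the reason one should not expect more than weak convergence in the multi-tree regime.) Establishing the scaling-limit input is where the real difficulty sits: for regular exhaustions one can hope to obtain it from Green-function and heat-kernel estimates on $\Zd$, whereas for an arbitrary exhaustion one needs quantitative control on where a random walk first hits a prescribed far-away set, as in \cite{JK04}; proving it with a limit independent of the exhaustion is the crux. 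Once this is in hand, assembling the finite-dimensional limits produces the measure $\nu$ on $\Om_{\Zd}$ with $\nu_n\Rightarrow\nu$.
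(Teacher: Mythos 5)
Your framework is the right one and matches the paper's: couple Wilson's method in $G_n$ (rooted at $s$) and in $\Zd$ (rooted at infinity), observe via Lemma \ref{lem:depend} that the local data stabilize (this is the content of Lemma \ref{lem:highDcoupling}), and isolate the comparison of burning depths of neighbours lying in distinct WSF components as the real issue. The gap is that your treatment of that issue is conditional: the entire cross-tree analysis rests on an unproven ``key fact'' (rescaled depths $d_{t_n}(s,x)$ converge jointly to a non-atomic limit, with a limit independent of the exhaustion), which you then ``grant''. That statement is precisely the hard part of the theorem, and as formulated it is both stronger than what is needed and not obviously exhaustion-independent: the normalization and the joint law of the depths of different components depend on the shape of $V_n$, so convergence of the rescaled depths (even if true) does not by itself tell you that the law of the \emph{signs} of $d_{t_n}(s,x)-d_{t_n}(s,y)$ converges to something independent of the exhaustion. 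You acknowledge this is ``the crux'', but a proof proposal that assumes the crux does not establish the theorem.

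What the actual argument supplies (in \cite{AJ04,JR08}, and in this paper's generalization, Theorem \ref{thm:trans}, which covers $\Zd$, $d\ge 5$) are two separate inputs. First, an anticoncentration statement: the differences $d^{(i,j)}_{n,A}$ have diverging fluctuations, so with high probability they exceed the diameters of the local trees --- note you need \eqref{e:diam-large}, i.e.\ $|d^{(i,j)}_{n,x}|$ larger than $\diam(F^{(i)}_{n,x})$, not merely $\ge 2$. This is Proposition \ref{prop:fluct}, proved here via good blocks, loop-free points, conditional independence and a local CLT bound, and in \cite{AJ04} via LERW length estimates, with the hitting-time bound of \cite{JK04} used in \cite{JR08} to handle arbitrary exhaustions. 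Second, and decisively, the identification of the limiting relative order of the components as \emph{uniform} (Proposition \ref{prop:unif}): on $\Zd$ the exit measures of walks started from different points merge in total variation because bounded harmonic functions are constant, so the continuations are asymptotically i.i.d.\ and the induced permutation $\sigma_{n,A}$ is asymptotically exchangeable. It is this uniformity --- not a scaling limit of depths --- that makes the limit independent of the exhaustion and allows one to define $\nu$ intrinsically from the WSF together with an independent uniform ordering of components as in \eqref{e:etax} and Lemma \ref{lem:only-perm}. Your proposal never identifies the limiting law of the signs, so even granting your scaling-limit input, the exhaustion-independence of the limit (the heart of the statement) would remain unproved.
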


The goal of this section is to generalize Theorem \ref{thm:AJhigh}
to other graphs under certain conditions.

Let $G = (V,E)$ be an infinite locally finite graph. 
We denote by $\AUT(G)$ the group of
graph automorphisms of $G$. With the topology of 
pointwise convergence, $\AUT(G)$ is a locally compact
group \cite[Lemma 1.27]{Woess}. 

A function $h : V \to \R$ is called \emph{harmonic}, 
if for every $x \in V$ we have
\eqnst
{ \frac{1}{\deg_G(x)} \sum_{y : y \sim x} h(y) 
  = h(x). }

We make the following assumptions on $G$.

\begin{assumption}
\label{ass:Gcond} \ \\
(i) $G$ is vertex-transitive. \\
(ii) The probability that two independent simple random walks
on $G$ started at some vertex intersect infinitely often
is $0$.\\
(iii) Each component of the WSF of $G$ has one end a.s.\\
(iv) Every bounded harmonic function on $G$ is constant.
\end{assumption}

We are going to prove the following theorem.

\begin{theorem}
\label{thm:trans} 
Let $G = (V,E)$ be an infinite, locally finite graph,
satisfying Assumption \ref{ass:Gcond}(i)--(iv).
There exists a measure $\nu$ on $\Om_G$ such that for any 
exhaustion $V_1 \subset V_2 \subset \dots \subset V$
we have $\nu_{G_n} \Rightarrow \nu$. 
\end{theorem}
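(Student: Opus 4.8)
The plan is to follow the coupling strategy of Theorem~\ref{thm:AJlow}, but to handle the fact that the WSF now has infinitely many components by localizing the argument around each vertex $x$ and using the one-end property of the individual components together with the no-intersection Assumption~\ref{ass:Gcond}(ii). First I would observe that by Lemma~\ref{lem:depend}(iii), the value $\eta_x = (\phi_G(t))_x$ depends on $t$ only through the cardinality of $\{y \sim x : d_t(s,x) - d_t(s,y) \ge 1\}$, the set $\{ y \sim x : d_t(s,x) - d_t(s,y) = 1 \}$, and the edge $e_x$. In the infinite-volume picture, where each component of the WSF has one end, the analogue of this data is: for each $y \in \cN_x$, the unique infinite directed path $\pi_y$ in $t$, the first meeting point $v_x$ of these paths (when $x$ and its neighbours lie in the same component), and the subtree $(F_x, v_x)$, exactly as in the proof of Theorem~\ref{thm:AJlow}. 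The key difference from the single-tree case is that $v_{n,x}$ need not stabilize: with probability bounded away from $0$, some neighbours of $x$ lie in different components of the WSF, so $v_{n,x}$ escapes to the sink. The resolution is to redefine $\phi_G$ on the multi-component WSF so that $\eta_x$ depends on whether neighbours of $x$ are ``upstream'' of $x$ in its own component, which is exactly the information in Lemma~\ref{lem:depend}(iii), and this information \emph{does} stabilize.

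Concretely, I would set up Wilson's method rooted at infinity (Theorem~\ref{thm:Wilsontrans}; note that Assumption~\ref{ass:Gcond}(ii) forces transience) to generate $t \sim \mu$, and Wilson's method rooted at the sink $s$ to generate $t_n \sim \mu_{G_n}$, coupling the two by using the same random walks $S^j$ until their first exit $\tau_n^j$ from $V_n$. Fix a finite $A \subset V$ and enumerate $\cup_{x \in A} \cN_x$ first. For each $x \in A$ and each $y \in \cN_x$, the branch of the WSF grown from $y$ is the loop-erased walk $\LE(S^j[0,\infty))$, which has one end; by Assumption~\ref{ass:Gcond}(ii), these loop-erased rays from distinct starting points are eventually disjoint or eventually coincide, so the ``local combinatorial data'' at $x$ (which neighbours $y$ have their ray passing through $x$, i.e.\ lie upstream of $x$) is determined by the walks up to some a.s.\ finite time. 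Choosing a finite edge set $C$ large enough that this determining data lies in $C$ with probability $> 1 - \eps$, and then $n$ large enough that, by transience, none of the walks $S^j$ return to $C$ after time $\tau_n^j$ (as in \eqref{e:notback}), I would conclude that the local data at every $x \in A$ agrees between $t$ and $t_n$ on an event of probability $> 1 - 2\eps$. By Lemma~\ref{lem:depend}(iii) and the definitions of $\phi_G$, $\phi_{G_n}$, this gives $\P[\eta_{n,x} = \eta_x,\ x \in A] \to 1$, hence $\nu_{G_n} \Rightarrow \nu$ with $\nu$ the image of $\mu$ under $\phi_G$, independently of the exhaustion.

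The main obstacle, and the reason Assumptions~\ref{ass:Gcond}(i) and (iv) and the volume growth hypothesis (implicitly, via the cited results of BLPS and Lyons--Morris--Schramm) are needed, is the following subtlety that does not arise in the single-tree case: when $x$ has a neighbour $y$ whose WSF-ray does \emph{not} pass through $x$, I need to know not just that the rays are eventually disjoint in the \emph{limit}, but that in the finite graph $G_n$ the vertex $y$ is not connected to $x$ ``the short way'' through the bulk of $V_n$ in a way that would make $v_{n,x}$ close to $x$ rather than at the sink — equivalently, I need the finite-$n$ local data at $x$ to converge to the infinite-volume local data, not merely the infinite-volume objects to be well-defined. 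Controlling this requires showing that the probability that two neighbours of $x$ are joined in $t_n$ by a path staying near $x$ converges to the corresponding WSF probability, which is where one must invoke that the component structure of the WSF is the correct limit — this is the content of the more technical lemmas (\ref{lem:cycle}, \ref{lem:goodblocks}) and Proposition~\ref{prop:fluct} alluded to in the introduction, and where transitivity, triviality of bounded harmonic functions (to control the ``fluctuations'' of the interface between components via a coupling/entropy argument), and a quantitative one-end estimate all enter. I expect the bulk of the work to be in establishing this stabilization of the finite-volume local data, with the coupling skeleton above being routine once it is in hand.
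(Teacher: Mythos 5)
There is a genuine gap, and it is at the heart of the multi-tree difficulty. Your coupling step is fine: showing that the component structure at $x$ and the rooted trees stabilize is exactly the paper's Lemma \ref{lem:highDcoupling}, and it follows routinely from transience, Assumption \ref{ass:Gcond}(ii)--(iii) and Wilson's method, much as you describe. But your claim that the data of Lemma \ref{lem:depend}(iii) ``does stabilize'' is false, and with it the conclusion that $\nu$ is the image of $\mu$ under a deterministic map $\phi_G$. When a neighbour $y \in \cN_x$ lies in a \emph{different} component of $\bt_n = t_n \setminus \{s\}$ than $x$, the quantity you need --- whether $d_{t_n}(s,y) < d_{t_n}(s,x)$, which feeds into the cardinality of $\{ y \sim x : d_{t_n}(s,x) - d_{t_n}(s,y) \ge 1 \}$ --- is determined by the \emph{difference of the lengths} of two loop-erased random walk paths from different components all the way to the sink. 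This is a global quantity with no almost-sure or even distributional limit as a function of the WSF sample; it is not captured by any finite edge set $C$, so no coupling of the type you describe can fix it. This is precisely why the paper does not (and cannot) define $\eta_x$ as a function of $\bt$ alone: in \eqref{e:etax} the limit configuration uses the local trees $(F^{(i)}_x, v^{(i)}_x)$ \emph{together with an independent uniform random ordering of the WSF components} (i.i.d.\ $\Unif(0,1)$ labels), and $\nu$ is the image of $\mu$ augmented by this extra randomness.

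The obstacle you flag in your last paragraph (whether $y$ might connect to $x$ ``the short way'' in $G_n$) is not where the cited machinery is used; that issue is already disposed of by Lemma \ref{lem:highDcoupling}. The actual content of Lemmas \ref{lem:cycle}, \ref{lem:goodblocks}, \ref{lem:condindep} and Proposition \ref{prop:fluct} is to show that the relative depths $d^{(i,j)}_{n,x}$ of the distinct components \emph{diverge} (via good blocks with two equal-length walk segments whose loop-erasures have different lengths, plus a local CLT), so that by Lemma \ref{lem:only-perm} only the induced permutation $\sigma_{n,x}$ matters; and the content of Proposition \ref{prop:unif} is that this permutation becomes asymptotically uniform and asymptotically independent of the local trees --- this is the sole place Assumption \ref{ass:Gcond}(iv) enters, through the total-variation closeness of exit measures $h_{V_{n''}}(z_1,\cdot)$ and $h_{V_{n''}}(z_2,\cdot)$, which would otherwise allow the ordering of components to remember the exhaustion. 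Without these two steps, and without building the extra randomization into the definition of $\nu$, your proposed limit object is not well defined and the weak convergence claim does not follow.
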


Before setting out to prove Theorem \ref{thm:trans}, 
let us discuss examples where the conditions are satisfied.

\medbreak

\emph{Condition (i).} Suppose that $\Gamma$ is a finitely 
generated group, and let $S$ be a fixed finite generating 
set with the property that if $s \in S$ then also 
$s^{-1} \in S$. The (right-)Cayley graph of $(\Gamma,S)$ is the graph 
with vertex set $V = \Gamma$ and edge set   
\eqnst
{ E 
  := \{ \{ x, xs \} : x \in \Gamma,\, s \in S \}. } 
Any Cayley graph is vertex-transitive, as shown by
left-multiplication by elements of $\Gamma$.

\medbreak

\emph{Condition (ii).} Suppose that $G$ is a 
vertex-transitive graph, and let $o$ be a fixed vertex 
of $G$. Write $d(\cdot, \cdot)$ for graph distance in $G$. 
Let 
\eqnst
{ V(n)
  := \left| \{ x \in V : d(o,x) \le n \} \right|. }
Suppose that there exists a constant $c > 0$ such that
$V(n) \ge c n^5$. Let $\{ S_n \}_{n \ge 0}$ be simple
random walk on $G$. Due to \cite[Corollary 14.5]{Woess}, 
the return probability of $S$ satisfies 
$\P [ S_{2n} = o \,|\, S_0 = o ] \le C n^{-5/2}$. 
As explained in \cite[Section 5]{LPS03}, this implies 
that the expected number of intersections (with multiplicity)
between two independent simple random walks starting at
$o$ is finite. Hence (ii) is satisfied in this case.
Note that by \cite[Theorem 9.4]{BLPS01}, the WSF has 
infinitely many trees a.s., whenever (i) and (ii) are
satisfied.

\medbreak

\emph{Condition (iii).} Suppose that $G$ is a 
vertex-transitive graph satisfying $V(n) \ge c n^3$. 
It follows from results of Lyons, Morris and Schramm
\cite[Theorem 7.1]{LMS08}, \cite[Corollary 7.3]{LMS08},
that every tree of the WSF has one end a.s. In the cases
when the WSF is a single tree, and when the WSF is disconnected
with $\AUT(G)$ unimodular, this was earlier 
proved by BLPS \cite[Theorem 10.3]{BLPS01}, 
\cite[Theorem 10.4]{BLPS01}.
Hence (iii) is satisfied for a large class of graphs.

\medbreak

\emph{Condition (iv).} Let $G = (V,E)$ be a graph on which
the group $\Gamma \subseteq \AUT(G)$ acts transitively, 
i.e., for any $x, y \in V$ there exists 
$\varphi \in \Gamma$ such that $\varphi(x) = y$. 
Examples where Assumption \ref{ass:Gcond}(iv) 
is satisfied are given by \emph{nilpotent} groups $\Gamma$.
Recall that for $a, b \in \Gamma$, their commutator
is defined as $[a,b] := a^{-1} b^{-1} a b$. Let 
$\Gamma_1 := \Gamma$, and for $k \ge 2$ let
$\Gamma_k$ be the subgroup of $\Gamma$ generated
by all elements of the form 
$[\dots[[a_1,a_2],a_3],\dots,a_k]$. Then 
\eqnst
{ \Gamma
  = \Gamma_1 \supseteq \Gamma_2 \supseteq \Gamma_3 \supseteq \dots }
is called the \emph{lower central series} of $\Gamma$.
If there exists an $r$ such that $\Gamma_{r+1}$ is the 
trivial group, $\Gamma$ is called 
\emph{nilpotent} \cite[Chapter 10]{Hall}.
It was shown in \cite{DM61} that if $\Gamma$ is 
nilpotent then any bounded harmonic function on $G$
is constant.

\begin{remark}
We believe that the technical Assumption \ref{ass:Gcond}(iv)
is not necessary. However, at present the only example
where we know the existence of $\nu$ without this assumption
is the case of a regular tree, discussed in 
Section \ref{sec:limitsIII}.
\end{remark}

\subsection{Notation and coupling}
\label{ssec:notation}

We prepare for the proof of Theorem \ref{thm:trans} 
by defining the appropriate analogue of $(F_x,v_x)$. 
This is done in the same way as for the case 
of $\Z^d$, $d \ge 5$ in \cite{AJ04}. In order to be 
self-contained, we give the details.
The idea behind the definitions is that there is probability
bounded away from zero, as $n \to \infty$, 
that two given vertices $y_1, y_2 \in \cN_x$ will be connected 
through the sink $s$. This means that $(F_{n,x},v_{n,x})_{n \ge 1}$
is not tight. We want to replace it with an object that 
is tight, by removing the connections through $s$.

We first give the finite volume definitions. We use notation 
similar to Section \ref{sec:limitsI}, that is, lower
indices $n$ refer to the graph $G_n$. Fix $x \in V$, 
and assume that $\cN_x \subset V_n$. Let $t_n \in \cT_{G_n}$, 
and define the forest $\bt_n := t_n \setminus \{ s \}$. Let
\eqnspl{e:subtrees3}
{ K_{n,x} (\bt_n)
  &:= \text{number of connected components of $\bt_n$ 
     intersecting $\cN_x$} \\
  t^{(1)}_{n,x}, \dots, t^{(K_{n,x})}
  &:= \text{the components of $\bt_n$ that intersect $\cN_x$} \\
  A^{(i)}_{n,x}
  &:= t^{(i)}_{n,x} \cap \cN_x, \quad 1 \le i \le K_{n,x}. }
Here the indexing of the $t^{(i)}$'s and the $A^{(i)}$'s 
is determined as follows. We fix an ordering of $\cN_x$, let us say 
$\cN_x = \{ y_0 = x, y_1, \dots, y_{\deg_G(x)} \}$.
We let $t^{(1)}_{n,x}$ be the component of $\bt_n$ containing $y_0$, 
let $t^{(2)}_{n,x}$ be the component containing the earliest 
$y_i$ not in $t^{(1)}_{n,x}$, etc. 

Let $v^{(i)}_{n,x}$ be the unique vertex $v$ of $t^{(i)}_{n,x}$
such that $v \in \pi_{n,y}$ for all 
$y \in A^{(i)}_{n,x}$, and $d_t(s,v)$ is maximal. 
We define
\eqn{e:F(i)s}
{ \vec{F}^{(i)}_{n,x} 
  := \left\{ e \in t^{(i)}_{n,x} : 
     e_- \in \bigcup_{y \in A^{(i)}_{n,x}} \pi_{n,y},\, 
     d_t(s,e_+) \ge d_t(s,v^{(i)}_{n,x}) \right\}. }
Specifying the $\vec{F}^{(i)}_{n,x}$'s is equivalent to specifying
the undirected rooted trees $(F^{(i)}_{n,x},v^{(i)}_{n,x})_{i=1}^{K_{n,x}}$.

We introduce the set of relative distances:
\eqnst
{ d^{(i,j)}_{n,x}
  = d^{i,j}_{n,x} (t_n)
  = d_n(v^{(i)}_{n,x}, s) - d_n(v^{(j)}_{n,x}, s),
    \quad 1 \le i < j \le K_{n,x}. }
Due to Lemma \ref{lem:depend}(ii),
$\eta_{n,x} = (\phi_n(t_n))_x$ only depends on the data:
\eqnst
{ K_{n,x}(t_n), \quad
  ( F^{(i)}_{n,x}(t_n), v^{(i)}_{n,x}(t_n) )_{i=1}^{K_{n,x}}, \quad
  \{ d^{(i,j)}_{n,x}(t_n) \}_{1 \le i < j \le K_{n,x}}. }
When each tree in the WSF on $G$ has one end, we can expect
that the joint law of 
\eqnst
{ K_{n,x}, \quad (F^{(i)}_{n,x}, v^{(i)}_{n,x})_{i=1}^{K_{n,x}} } 
converges as $n \to \infty$. The candidate for the limit 
is given by the natural analogues in the graph $G$, 
that we now define. 

Let $\cT_G \subset \{0,1\}^E$ denote the set of all
spanning forests of $G$ such that each component is
infinite and has one end. Let $\bt \in \cT_G$. Due to the 
one end property, we can direct each edge of $\bt$ towards
the end of the component containing it. Again, we write
$u \preceq v$, if there is a directed path from $u$ to $v$. 
As in Section \ref{sec:limitsI}, for $y \in V$ we denote 
by $\pi_y$ the unique infinite directed path in $\bt$ 
starting at $y$. Fix $x \in V$, and let
\eqnspl{e:subtrees2}
{ K_{x} (\bt)
  &:= \text{number of connected components of $\bt$ 
     intersecting $\cN_x$} \\
  t^{(1)}_x, \dots, t^{(K_x)}_x
  &:= \text{the components of $\bt$ that intersect $\cN_x$} \\
  A^{(i)}_x 
  &:= t^{(i)}_x \cap \cN_x, \quad 1 \le i \le K_x. }
Here the indexing of the $t^{(i)}$'s and $A^{(i)}$'s follows 
the same rule as in the case of $G_n$. 
Let $v^{(i)}_x$ be a vertex of $t^{(i)}_x$ minimal with 
respect to the relation $\preceq$ among all vertices $v$ 
with the property that $y \preceq v$ for all 
$y \in A^{(i)}_x$. Such a vertex exists,
due to the one end property, and there is a unique
minimal one. Let
\eqnst
{ \vec{F}^{(i)}_x 
  := \left\{ e \in t^{(i)}_x : 
     e_- \in \bigcup_{y \in A^{(i)}_x} \pi_y,\, 
     e_+ \preceq v^{(i)}_x) \right\}, \quad 1 \le i \le K_x. }
Specifying the $\vec{F}^{(i)}_x$'s is equivalent to specifying
the undirected, rooted trees $(F^{(i)}_x,v^{(i)}_x)$.

\begin{lemma}
\label{lem:highDcoupling}
Suppose that $G = (V,E)$ is a transient
graph that satisfies Assumption \ref{ass:Gcond}(ii)--(iii).
For any finite $A \subset V$ there is a coupling of 
$\mu_n$, $n \ge 1$, and $\mu$ such that in this coupling 
\eqn{e:coupled}
{ \lim_{n \to \infty} \P \left[ K_{n,x} = K_x,\, 
    (F^{(i)}_{n,x}, v^{(i)}_{n,x}) 
    = (F^{(i)}_x,v^{(i)}_x),\, 
    1 \le i \le K_{n,x} \right]
  = 1. }
\end{lemma}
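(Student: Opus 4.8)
The plan is to realise the coupling via Wilson's algorithm rooted at infinity on $G$, run in parallel with Wilson's algorithm rooted at the sink on each $G_n$, reusing the same driving random walks. First I would enumerate $V$ so that the vertices of $\cN_x$, $x \in A$, come first, say $u_1, \dots, u_K$. For these first $K$ walks I would couple the walk on $G_n$ with the walk on $G$ so that they agree up to the exit time $\tau_n^j$ from $V_n$; after $\tau_n^j$ the $G_n$-walk is continued as an independent walk on $G_n$ (this is exactly the BLPS coupling used in the proof of Theorem \ref{thm:AJlow}, applied here in the transient regime). By Theorems \ref{thm:Wilsonrec}--\ref{thm:Wilsontrans} the resulting forests $\bt_n$ (obtained by deleting $s$ from $t_n$) and $\bt$ have the laws $\mu_n$ and $\mu$ respectively; Assumption \ref{ass:Gcond}(ii) guarantees transience and that the branches from the $u_j$'s loop-erase to finite paths, and Assumption \ref{ass:Gcond}(iii) guarantees that each component has one end, so that the objects $(F^{(i)}_x, v^{(i)}_x)$ are well-defined.

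The core of the argument is a \emph{stabilisation} statement: the finitely many rooted trees $(F^{(i)}_x, v^{(i)}_x)_{i}$ and the count $K_x$ are, with $\mu$-probability one, determined by the walk increments on some finite (random) set of edges, and this same finite data, when it is ``far enough inside'' $V_n$, produces the identical values in $G_n$. Concretely, given $\eps>0$ I would first choose a finite $C \subset E$ with $\P[\bigcup_{x \in A}\bigcup_i F^{(i)}_x \subset C] > 1-\eps$ and such that the last visit times $\hat\tau^j_C$ of $S^j$ to $C$ are finite for $j \le K$ (transience). Then by transience of the walk on $G_n$ — again using (ii) — for $n$ large I can guarantee with probability $>1-\eps$ that for each $j \le K$ the post-exit portion $S^j[\tau_n^j, \infty)$ on $G_n$ avoids both $C$ and $S^j[0,\hat\tau^j_C]$. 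On that event the loop-erased initial segments agree inside $C$ in the two constructions, so $\cF_K \cap C = \cF_K^n \cap C$, which forces $(F^{(i)}_{n,x}, v^{(i)}_{n,x}) = (F^{(i)}_x, v^{(i)}_x)$ and $K_{n,x} = K_x$ for all $x \in A$; letting $\eps \to 0$ gives \eqref{e:coupled}.

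The step I expect to be the main obstacle is showing that $K_{n,x}$ really stabilises to $K_x$ — i.e.\ that no ``extra'' merging of the components $t^{(i)}_{n,x}$ happens \emph{inside} $V_n$ through a long excursion that is nonetheless confined to $G_n$. The point is that such a merging would require one of the later reused walks $S^j$ (or a continuation walk) to re-enter $C$ after its exit from $V_n$; ruling this out is precisely what the transience estimate in the previous paragraph buys us, but one has to be careful that the continuation walks on $G_n$ started from $s$ do not wander back into $C$ with non-negligible probability. This is where transitivity (Assumption \ref{ass:Gcond}(i)) together with (ii) is used: it gives uniform control on $\P_s[\text{walk on }G_n\text{ hits }C]$ as $n \to \infty$, since hitting $C$ from the boundary of a large $V_n$ is comparable to a hitting probability that tends to $0$ by transience on $G$. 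Once that is in hand, the rest is the routine bookkeeping of matching up the Wilson constructions on the finite set $C$, exactly as in the transient case of the proof of Theorem \ref{thm:AJlow}.
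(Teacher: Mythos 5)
Your scaffolding (coupled Wilson constructions with the same walks, stabilisation on a large finite edge set $C$, transience estimates for the post-exit pieces) is the same as the paper's, but there is a genuine gap at exactly the step you flag as the main obstacle, and your proposed fix addresses the wrong danger. Extra merging of components in $G_n$ does \emph{not} require any walk to re-enter $C$ after exiting $V_n$. The real mechanism is this: if $i<j$ and both $T^i=T^j=\infty$, then in $G$ the path $S^j[0,\infty)$ avoids $\LE(S^i[0,\infty))$, but it may well intersect the \emph{path} $S^i[0,\infty)$ at points lying on loops of $S^i$ that are erased only after time $\tau^i_n$. Such points are still present in the truncated loop-erasure $\LE(S^i[0,\tau^i_n])$, they can be anywhere in $V_n$ (far from $C$), and if $S^j$ hits one of them before its own exit time $\tau^j_n$, then in the $G_n$ construction the branch from $u_j$ attaches to the tree of $u_i$ inside $V_n$, so $u_i$ and $u_j$ lie in the same component of $\bt_n$ while they lie in different WSF components in $G$; hence $K_{n,x}\neq K_x$, with no walk ever returning to $C$. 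Your transience estimate says nothing about this event.

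The paper closes this gap with the event \eqref{e:contained-infinite-paths}: by Assumption \ref{ass:Gcond}(ii) the pairwise intersections $S^i[0,\infty)\cap S^j[0,\infty)$ are a.s.\ finite, and since $T^j=\infty$ these finitely many points do not belong to $\LE(S^i[0,\infty))$, so they are eventually erased; therefore for $n$ large, with probability $>1-\eps$, $\LE(S^i[0,\tau^i_n))$ and $S^j[0,\tau^j_n)$ are disjoint for all $i<j$ in $J$. This is the essential use of (ii) beyond mere transience, and it has no counterpart in your proposal. Separately, your worry about ``continuation walks on $G_n$ started from $s$'' is moot: in the transient regime the root of the $G_n$ construction is the sink, so each walk is simply stopped at $\min(T^j_n,\tau^j_n)$ and nothing is continued after exiting $V_n$ (the continuation from $s$ appears only in the recurrent case of Theorem \ref{thm:AJlow}); consequently the appeal to transitivity (Assumption \ref{ass:Gcond}(i), which the lemma does not assume) is not needed for that purpose and does not substitute for the missing intersection estimate. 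With the event \eqref{e:contained-infinite-paths} added, one still needs the inductive bookkeeping showing $T^i_n=\tau^i_n$ or $T^i_n=T^i$ as appropriate and $\cF_{n,i}\cap C=\cF_i\cap C$, which is the routine part you describe.
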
 

\begin{proof}
Let $u_1, \dots, u_L$ be an enumeration of 
$\cup_{x \in A} \cN_x$. On $G$ we use Wilson's 
method rooted at infinity with random walks $S^j$,
started at $u_j$ for $j = 1, \dots, L$. 
On $G_n$, we use Wilson's method 
with root equal to the sink, and with the same random 
walks $S^j$, up to their first exit time $\tau_n^j$ 
from $V_n$. Recall the notation from 
Section \ref{sec:Wilson}: $(\cF_i)_{i \ge 0}$ 
and $(\cF_{n,i})_{i \ge 0}$ are the growing forests 
constructed by Wilson's method, and $T^j$ is the hitting time 
of $\cF_{j-1}$ by $S^j$. For any $C \subset E$, let 
\eqnst
{ \hat{\tau}^j_C
  := \sup \{ k \ge 0 : S^j(k) \in C \}. }
Let $J \subset \{ 1, \dots, L \}$ be the (random) set of 
indices such that $T^j = \infty$. 

Given $\eps > 0$, let $C \subset E$ be a large enough 
finite set such that
\eqn{e:contained-hits}
{ \P \left[ \cup_{j \not\in J} S^j[0,T^j] \subset C \right]
  > 1 - \eps. }
By transience, we can find $n_1$, such that 
for all $n \ge n_1$ we have
\eqn{e:contained-paths}
{ \P \left[ \text{for all $j \in J$ we have 
     $S^j[\tau^j_n, \infty) 
     \cap S^j[0,\hat{\tau}^j_{C}) = \es$} \right]
  > 1 - \eps. }
The significance of the event in \eqref{e:contained-paths}
is that on this event, the loop-erasing procedure
on $S^j[0,\infty)$ after time $\tau^j_n$ has no effect 
on the configuration in $C$, so the configurations in $C$
will be the same when the algorithm is run in $G_n$ and $G$.

Observe that if $i, j \in J$, $i < j$, then 
$\LE(S^i[0,\infty)) \cap S^j[0,\infty) = \es$. 
Assumption \ref{ass:Gcond}(ii) and transitivity implies
that almost surely
$|S^j[0,\infty) \cap S^i[0,\infty)| < \infty$. Since the 
points in this intersection are not present in
$\LE(S^i[0,\infty))$, we can find $n_2$ large enough such
that for all $n \ge n_2$ we have
\eqn{e:contained-infinite-paths}
{ \P \left[ \text{for all $i,j \in J$, $i < j$ we have 
     $\LE(S^i[0,\tau^i_n)) \cap S^j[0,\tau^j_n) 
     = \es$} \right]
  > 1 - \eps. }

Assume now the intersection of the events in 
\eqref{e:contained-hits}, \eqref{e:contained-paths} and
\eqref{e:contained-infinite-paths}. 
Let $n \ge \max \{ n_1, n_2 \}$. We prove that the 
event in \eqref{e:coupled} then must occur, 
implying the Lemma. We show that for all 
$1 \le i \le L$ the following holds:
\begin{itemize}
\item[(i)] if $T^i = \infty$ then $T^i_n = \tau^i_n$
and $\LE(S^i[0,\infty))$ and $\LE(S^i[0,\tau^i_n]$
agree up to their last visit to $C$;
\item[(ii)] if $T^i < \infty$ then $T^i = T^i_n < \tau^i_n$
and $\LE(S^i[0,T^i]) = \LE(S^i[0,T^i_n]) \subset C$;
\item[(iii)] $\cF_{n,i} \cap C = \cF_i \cap C$.
\end{itemize}

The proof is by induction on $i$. For $i = 1$ we 
have $T^1 = \infty$ and $T^1_n = \tau^i_n$ always,
so we are in case (i). Let $\gamma$ be the 
initial segment of $\LE(S^1[0,\tau^1_n])$ up to 
the last visit to $C$. Due to the event in 
\eqref{e:contained-paths}, $S^1$ makes no further
visit to $\gamma$ after time $\tau^1_n$, and 
therefore the initial segment of $\LE(S^1[0,\infty))$
up to the last exit from $C$ coincides with $\gamma$,
as required. 

Consider now $2 \le i \le L$. We prove (i).
The event in \eqref{e:contained-infinite-paths} implies
that $S^i[0,\tau^i_n)$ does not intersect any of the 
paths $\LE(S^j[0,\tau^j_n))$ with $j < i$, $j \in J$.
It also does not intersect $\LE(S^j[0,T^j_n])$ for
$j < i$, $j \not\in J$, since by the induction 
hypothesis for $j$, case (ii), we have
$\LE(S^j[0,T^j_n]) = \LE(S^j[0,T^j])$ and $T^i = \infty$.

We now prove (ii). 
By virtue of the event in \eqref{e:contained-hits},
$S^i[0,T^i]$ does not leave $C$. By the induction 
hypothesis (iii) we have $\cF_{n,i-1} \cap C = \cF_{i-1} \cap C$,
and the claim in (ii) follows immediately.

Statement (iii) follows from (i) and (ii).

It follows immediately from (i) and (ii) that 
that $K_{n,x} = K_x$. It also follows from (i) and (ii)
that for each $x \in A$ and $1 \le i \le K_x$ we have 
$(F^{(i)}_{n,x}, v^{(i)}_{n,x}) = (F^{(i)}_x, v^{(i)}_x) \subset C$.
This completes the proof.
\end{proof}

\subsection{Permutation of components}
\label{ssec:perm}

As in \cite{AJ04}, the key difficulty to overcome is 
to analyze the behaviour of the $d^{(i,j)}_{n,x}$'s. 
Lemma \ref{lem:depend}(ii) implies that 
when $\left| d^{(i,j)}_{n,x} \right|$ is large for all 
$1 \le i < j \le K_{n,x}$ then their exact value is 
irrelevant. More precisely, if 
\eqn{e:diam-large}
{ \min \left\{ \left| d^{(i,j)}_{n,x} \right| 
     : 1 \le i < j \le K_{n,x} \right\}
  > \max \{ \diam(F^{(i)}_{n,x}) : 1 \le i \le K_{n,x} \}, } 
then all that matter for the value of 
$\eta_{n,x} = \varphi_{G_n}(t_n)_x$ are the signs of 
$d^{(i,j)}_{n,x}$. Therefore, we introduce the 
permutation $\sigma_{n,x}$ of $\{ 1, \dots, K_{n,x} \}$
by requiring
\eqnst
{ d_{t_n} (v^{(\sigma(1))}_{n,x},s) 
  \le d_{t_n} (v^{(\sigma(2))}_{n,x},s)
  \le \dots 
  \le d_{t_n} (v^{(\sigma(K_{n,x}))}_{n,x},s). }
In case of ties, we break them according to an arbitrary 
fixed rule. We write $\Sigma_k$ for the set of
permutations of $\{ 1, \dots, k \}$, so that 
on the event $\{ K_{n,x} = k \}$, we have 
$\sigma_{n,x} \in \Sigma_k$.
We summarize the above observations on the
dependence on $\sigma_{n,x}$ in the following lemma.

\begin{lemma}
\label{lem:only-perm}
Let $G = (V,E)$ be an infinite graph.
For every $x \in V$, $1 \le k \le \deg_G(x)$ and 
$V_n \supset \cN_x$ there exist functions 
$f_{k,x} (F^{(1)},v^{(1)}, \dots, F^{(k)}, v^{(k)}, s)$
(where $s \in \Sigma_k$) with values in 
$\{ 0, \dots, \deg_G(x)-1 \}$ such that
whenever \eqref{e:diam-large} holds, we have
\eqnst
{ \eta_{n,x} 
  = f_{K_{n,x},x} (F^{(1)}_{n,x}, v^{(1)}_{n,x}, \dots, 
    F^{(K_{n,x})}_{n,x}, v^{(K_{n,x})}_{n,x}, \sigma_{n,x} ). }
\end{lemma}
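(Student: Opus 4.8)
The plan is to boil $t_n$ down to exactly the information on which $\eta_{n,x}$ depends and to exhibit that information as a function of $\big(F^{(i)}_{n,x},v^{(i)}_{n,x}\big)_{i=1}^{K_{n,x}}$ and $\sigma_{n,x}$. By Lemma~\ref{lem:depend}(iii), $\eta_{n,x}=\big(\phi_{G_n}(t_n)\big)_x$ is a function of only three quantities: the number $N_x:=\big|\{\,y\sim x:\,d_{t_n}(s,x)-d_{t_n}(s,y)\ge 1\,\}\big|$, the collection $\Pi_x:=\{\,y\sim x:\,d_{t_n}(s,x)-d_{t_n}(s,y)=1\,\}$ of oriented ``parent-level'' edges at $x$, and the tree-edge $e_{n,x}$ with tail $x$ on the $t_n$-geodesic from $x$ to $s$ --- once these are known $\eta_{n,x}$ is recovered from the construction of the bijection in Section~\ref{ssec:bijection} (with $N_x,\Pi_x$ in the roles of $n_x,P_x$ of \eqref{e:data}). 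So I would show that, assuming \eqref{e:diam-large}, each of $N_x$, $\Pi_x$, $e_{n,x}$ is a function of the claimed data, and then define $f_{k,x}$ by composition.

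The core step is a component-wise splitting of the distances to the sink. First, \eqref{e:diam-large} forces $|d^{(i,j)}_{n,x}|\ge 1$ for all $i\ne j$, so the vertices $v^{(1)}_{n,x},\dots,v^{(K_{n,x})}_{n,x}$ lie at pairwise distinct $t_n$-distances from $s$; hence $\sigma_{n,x}$ is genuinely the increasing rearrangement of those distances and encodes the sign of every $d^{(i,j)}_{n,x}$ (no tie-breaking is used). Each $y\in\cN_x$ lies in a unique component $t^{(i)}_{n,x}$, and since $v^{(i)}_{n,x}\in\pi_{n,y}$ we have $d_{t_n}(s,y)=d_{t_n}(s,v^{(i)}_{n,x})+d_{F^{(i)}_{n,x}}(v^{(i)}_{n,x},y)$, the second term being read off from the rooted tree $(F^{(i)}_{n,x},v^{(i)}_{n,x})$. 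With $x\in t^{(1)}_{n,x}$ by convention, for $y\in t^{(i)}_{n,x}$ this gives $d_{t_n}(s,x)-d_{t_n}(s,y)=d^{(1,i)}_{n,x}+\big(d_{F^{(1)}_{n,x}}(v^{(1)}_{n,x},x)-d_{F^{(i)}_{n,x}}(v^{(i)}_{n,x},y)\big)$, with the bracket a function of $(F^{(1)}_{n,x},v^{(1)}_{n,x})$ and $(F^{(i)}_{n,x},v^{(i)}_{n,x})$ of absolute value at most $\max_k\diam\big(F^{(k)}_{n,x}\big)$. Thus for $i=1$ the difference equals the bracket, while for $i\ne 1$ it is nonzero and shares the sign of $d^{(1,i)}_{n,x}$, which $\sigma_{n,x}$ supplies. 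Since moreover every copy of the sink adjacent to $x$ has difference $d_{t_n}(s,x)\ge 1$ and so contributes to $N_x$, and the number of those copies equals $\deg_G(x)-\sum_i|F^{(i)}_{n,x}\cap\cN_x|+1$ (using $A^{(i)}_{n,x}=F^{(i)}_{n,x}\cap\cN_x$), the quantity $N_x$ is now a function of the data.

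What remains is $\Pi_x$ and $e_{n,x}$, and this is where the only real difficulty sits. One first checks that $x=v^{(1)}_{n,x}$ if and only if the $t_n$-parent of $x$ is the sink, equivalently $d_{t_n}(s,x)=1$: if the parent $z\ne s$ then $z\in A^{(1)}_{n,x}$ while $v^{(1)}_{n,x}=x\notin\pi_{n,z}$, a contradiction. In the generic case $x\ne v^{(1)}_{n,x}$ one has $d_{F^{(1)}_{n,x}}(v^{(1)}_{n,x},x)\ge 1$, so for $i\ne 1$ the difference $d_{t_n}(s,x)-d_{t_n}(s,y)$ is $\ge|d^{(1,i)}_{n,x}|+1-\max_k\diam(F^{(k)}_{n,x})\ge 2$ when $d^{(1,i)}_{n,x}>0$ and is negative when $d^{(1,i)}_{n,x}<0$, hence never equals $1$; the sink-copies also have difference $\ge 2$. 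Therefore $\Pi_x$ consists only of component-$1$ neighbours at $F^{(1)}_{n,x}$-distance $d_{F^{(1)}_{n,x}}(v^{(1)}_{n,x},x)-1$ from $v^{(1)}_{n,x}$, and $e_{n,x}$ is the unique edge of $F^{(1)}_{n,x}$ with tail $x$ --- both depending on $(F^{(1)}_{n,x},v^{(1)}_{n,x})$ alone. In the degenerate case $x=v^{(1)}_{n,x}$, we have $d_{t_n}(s,x)=1$, so $N_x$ and $\Pi_x$ both count precisely the edges from $x$ to $s$, whose number $\deg_G(x)-\sum_i|F^{(i)}_{n,x}\cap\cN_x|+1$ is a function of the $F^{(i)}_{n,x}$'s, and (since $G$ is simple, so that the $x$--$s$ edge involved is unique) the bijection formula forces $\eta_{n,x}=\deg_G(x)-1$. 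Gluing the two cases defines $f_{k,x}$. The main obstacle is exactly this degenerate case, where $e_{n,x}$ sits ``at the sink'' and is not visible in any $F^{(i)}_{n,x}$, so that instead of reading it off one must argue that the remaining data pins $\eta_{n,x}$ down; away from it, the one quantitative input is that $d_{F^{(1)}_{n,x}}(v^{(1)}_{n,x},x)\ge 1$ combined with \eqref{e:diam-large} pushes all cross-component differences out of $\{0,1\}$, confining the parent-level structure to the component of $x$.
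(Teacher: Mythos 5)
Your argument is correct and is essentially the paper's intended one: the paper leaves this lemma as a summary of the observations after Lemma \ref{lem:depend}, namely that each difference $d_{t_n}(s,x)-d_{t_n}(s,y)$ splits as $d^{(1,i)}_{n,x}$ plus a correction read off from the rooted trees and bounded by $\max_k\diam(F^{(k)}_{n,x})$, so that under \eqref{e:diam-large} only the signs of the $d^{(i,j)}_{n,x}$'s, i.e.\ $\sigma_{n,x}$, remain relevant; your write-up just makes this explicit via Lemma \ref{lem:depend}(iii). One remark: the ``degenerate case'' $x=v^{(1)}_{n,x}$ that you single out as the main obstacle (and likewise the sink-copies adjacent to $x$) cannot occur here, since the hypothesis $V_n \supset \cN_x$ means every $G$-neighbour of $x$ lies in $V_n$, so $x$ has no edge to $s$ in $G_n$, its $t_n$-parent lies in $t^{(1)}_{n,x}$, and hence $d_{F^{(1)}_{n,x}}(v^{(1)}_{n,x},x)\ge 1$ always. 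This is fortunate, because your handling of that case is the one shaky point: simplicity of $G$ does not make the $x$--$s$ edge unique in $G_n$ (identifying $V\setminus V_n$ can create several parallel edges from $x$ to $s$), so the forced value $\eta_{n,x}=\deg_G(x)-1$ would not follow in general; since the case is vacuous, the proof stands on your (correct) main case.
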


We will show that if all bounded harmonic functions are 
constant, then $\sigma_{n,x}$ is asymptotically uniform. 
More precisely, conditioned on 
$K_{n,x} = k$ and $(F^{(i)}_{n,x}, v^{(i)}_{n,x})_{i=1}^{K_{n,x}}$, 
$\sigma_{n,x}$ converges in distribution, as
$n \to \infty$, to a uniform random element of $\Sigma_k$.
Assuming this (and a certain consistency 
property between the permutations corresponding to 
different $x_1, x_2 \in V$), 
we can define the measure $\nu$ that is the candidate for 
the limit.

Let $\bt$ be a sample from the WSF on $G$. Consider a 
random linear ordering of the components of $\bt$ that has the
property that it induces the uniform permutation on any
finite subset of components. This can be realized for 
example by assigning i.i.d.~$\Unif(0,1)$ variables
to the components, and considering the ranking induced 
by these. Given components $t_1 \not= t_2$ of $\bt$, we write
$t_1 < t_2$, if $t_1$ preceeds $t_2$ in the
ordering. For any $x \in V$, define $\sigma_x \in \Sigma_{K_x}$
by requiring:
\eqnst
{ \sigma_x(i) < \sigma_x(j) \quad \text{if and only if} \quad
  t^{(i)}_x < t^{(j)}_x \qquad \text{for all $1 \le i < j \le K_{x}$}. }
Define the configuration $\eta \in \Omega_G$ by setting
\eqn{e:etax}
{ \eta_x 
  := f_{K_x,x} (F^{(1)}_x, v^{(1)}_x, \dots,  
     F^{(K_x)}_x, v^{(K_x)}_x, \sigma_x), }
that is defined $\mu$-a.s., under Assumption \ref{ass:Gcond}(iii).
Let $\nu$ be the image of the measure $\mu$ under the 
map $\bt \mapsto \eta$.

In order to prove Theorem \ref{thm:trans}, we need to show
that for any $A \subset V$ finite, the joint distribution 
of $\{ \eta_{n,x} \}_{x \in A}$ converges to the joint 
distribution of $\{ \eta_x \}_{x \in A}$. We extend 
to this situation some of the definitions made for single points. 
Namely, let
\eqnspl{e:subtrees4}
{ K_{n,A} (\bt_n)
  &:= \text{number of connected components of $\bt_n$ 
     intersecting $\cup_{x \in A} \cN_x$} \\
  t^{(1)}_{n,A}, \dots, t^{(K_{n,A})}_{n,A}
  &:= \text{the components of $\bt_n$ that 
     intersect $\cup_{x \in A} \cN_x$}. }
We define $(F^{(i)}_{n,A}, v^{(i)}_{n,A})$, $d^{(i,j)}_{n,A}$ 
and $\sigma_{n,A}$ completely analogously to the single point case.
We also introduce $K_A$, $t^{(1)}_A, \dots, t^{(K_A)}_A$,
$(F^{(i)}_A, v^{(i)}_A)$ in the infinite graph $G$. 

The following two propositions make precise the intuition 
about fluctuations of $d^{(i,j)}_{n,A}$ and the uniformity of 
$\sigma_{n,A}$. 

\begin{proposition}
\label{prop:fluct}
Suppose $G = (V,E)$ satisfies Assumption \ref{ass:Gcond}(i)--(iii).
For any finite $A \subset V$ we have
\eqn{e:max}
{ \lim_{M \to \infty} \limsup_{n \to \infty} 
    \P \left[ \min_{1 \le i < j \le K_{n,A}} 
    \left| d^{(i,j)}_{n,A} \right| \le M \right]
  = 0. }
\end{proposition}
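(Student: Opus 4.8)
The plan is to show that the vertices $v^{(i)}_{n,A}$ associated to distinct components of $\mathbf{t}_n$ are forced, with high probability, to be at very different graph distances from the sink --- in other words that the relative distances $d^{(i,j)}_{n,A}$ cannot all be small. I would work via Wilson's method rooted at the sink $s$, generating $t_n$ by starting random walks at the vertices $u_1, \dots, u_L$ enumerating $\bigcup_{x \in A} \cN_x$. The key observation is that $v^{(i)}_{n,A}$ is (essentially) the point where the loop-erased walk started from a representative of the $i$-th component first hits the sink, so that $d_n(v^{(i)}_{n,A}, s)$ is controlled by the length of that loop-erased walk segment, equivalently the ``time'' at which the $i$-th Wilson branch reaches the sink. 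Concretely, under Assumption \ref{ass:Gcond}(ii) and transitivity, Lemma \ref{lem:highDcoupling} lets us couple so that for large $n$ the forests $F^{(i)}_{n,A}$ stabilize to their infinite-volume limits $F^{(i)}_A$ inside a fixed finite set $C$; what remains random and potentially large is precisely the portion of each branch that travels from the (stabilized) local picture out to the sink.

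First I would fix $M$ and $A$, and use Lemma \ref{lem:highDcoupling} to reduce to the event that $K_{n,A} = K_A =: k$ and the local trees $(F^{(i)}_{n,A}, v^{(i)}_{n,A})$ have already settled down; on this event the $v^{(i)}_{n,A}$ are identified with fixed vertices of $G$ whose distance from the sink is what we must estimate. Next I would express $d_n(v^{(i)}_{n,A}, s)$ in terms of the loop-erased random walk: for each component $i$ pick the walk $S^{j(i)}$ that generated it, and note that after its last visit to $C$, the walk $S^{j(i)}$ performs a long excursion to the sink whose loop-erasure has length at least $d_n(v^{(i)}_{n,A}, s) - \mathrm{diam}(C)$. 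Then the core estimate is a statement about two independent random walks (or one walk watched at two stages): conditioned on leaving the finite set $C$, the times/lengths at which distinct Wilson branches reach the sink differ by more than $M$ with probability tending to $1$ as $n \to \infty$, uniformly once $M$ is large. I expect this to follow from transience together with the fact that, as $V_n$ exhausts $V$, the hitting distribution of the sink and the length of the loop-erased approach both escape to infinity; the branches being built from independent (or independent-after-separation) walks, their approach lengths are asymptotically independent diffuse quantities, so no two of them can coincide within a window of width $M$ with nonvanishing probability.

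The main obstacle, I anticipate, is making rigorous the claim that the approach lengths $d_n(v^{(i)}_{n,A}, s)$ for different $i$ become ``spread out'' --- i.e.\ that no fixed window of width $M$ captures two of them with probability bounded below. This is where transience and the non-intersection Assumption \ref{ass:Gcond}(ii) must be combined carefully: after the walks $S^{j(i)}$ separate from the common finite region, Assumption \ref{ass:Gcond}(ii) and transitivity guarantee they intersect only finitely often, so their subsequent trajectories (and hence the lengths of their loop-erased excursions to the sink) are, up to a finite correction, governed by independent transient walks; one then needs that for a single transient walk conditioned to exit $V_n$, the loop-erased length to the absorbing sink converges in distribution to $+\infty$ (has no atoms in any bounded interval in the limit), which is plausible but requires an argument that the escape time through the ``wired'' boundary diverges as $n \to \infty$. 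I would handle this by a compactness/diagonal argument: if \eqref{e:max} failed, some subsequence would have $\mathbf{P}[|d^{(i,j)}_{n,A}| \le M] \ge \delta > 0$ for a fixed pair $(i,j)$ and fixed $M$, and I would derive a contradiction by showing this forces the two corresponding loop-erased excursions to the sink to have comparable lengths with probability $\ge \delta$, which is incompatible with the escape lengths tending to infinity as $n \to \infty$ (the finite portions having been frozen by Lemma \ref{lem:highDcoupling}). The bookkeeping relating $d^{(i,j)}_{n,A}$ precisely to loop-erased lengths, and controlling the $\mathrm{diam}(F^{(i)}_{n,A})$ corrections, is routine given Lemma \ref{lem:depend}(ii) and Lemma \ref{lem:highDcoupling}, so the real content is the non-atomicity-in-the-limit of the escape length.
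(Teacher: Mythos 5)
There is a genuine gap at the heart of your proposal. Your plan reduces the problem, correctly, to comparing the lengths of the loop-erased excursions from a fixed finite region to the sink, but then you argue that because each such length escapes to infinity as $n \to \infty$ and the walks are (asymptotically) independent, the \emph{difference} of two lengths cannot lie in a window of width $M$ with non-vanishing probability. That inference is a non sequitur: two quantities can both tend to infinity while their difference stays bounded, or even deterministic. What is needed is an anticoncentration estimate for the difference $d_{t_n}(v,s) - d_{t_n}(w,s)$, uniform in $n$, and this does not follow from transience, transitivity and Assumption \ref{ass:Gcond}(ii) alone. The regular tree is an explicit counterexample to your key step: it is transitive, transient, two independent walks intersect only finitely often, and each WSF component has one end, yet the loop-erased length from a vertex to the wired boundary of a ball is deterministic given the exit point, so there is no fluctuation at all and the conclusion of the proposition fails there (the paper points this out in Section \ref{sec:limitsIII}). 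Hence your compactness/diagonal argument cannot produce the contradiction you want; ``escape lengths tend to infinity'' is simply compatible with $|d^{(i,j)}_{n,A}| \le M$ having probability bounded below. Likewise, non-atomicity of the limiting law of a single escape length, even if established, would not by itself give the required uniform spreading of the difference.

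The paper's proof supplies exactly the missing ingredient: it manufactures genuine randomness in the loop-erased length. Since $G$ is transitive and not a tree, Lemma \ref{lem:cycle} produces a cycle with two attached infinite rays, from which one builds two local trajectories $\beta_1, \beta_2$ of the \emph{same} number of steps whose loop-erasures have \emph{different} lengths. Lemma \ref{lem:goodblocks} (via an ergodic-theorem argument over $\AUT_o$-averaged blocks) shows that ``good blocks'' -- where the walk traces a copy of $\beta_1$ or $\beta_2$, isolated from the rest of the path -- occur with positive density, and Lemma \ref{lem:condindep} shows that, conditioned on everything outside the good blocks, each good block is an independent fair choice between $\beta_1$ and $\beta_2$. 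Consequently, conditionally on the second walk and on all the data outside the good blocks of the first walk, $\length(\gamma^1) - \length(\gamma^2)$ is a constant plus a sum of $K$ i.i.d.\ nondegenerate increments, and the local CLT gives $\P[|\length(\gamma^1)-\length(\gamma^2)| \le M] \le cM/\sqrt{K}$; letting $K \to \infty$ yields \eqref{e:max}. Your reduction via Lemma \ref{lem:highDcoupling} and the bookkeeping with Lemma \ref{lem:depend}(ii) match the paper, but without some version of this fluctuation mechanism (or another anticoncentration device exploiting that $G$ is not a tree) the proof does not go through.
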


\begin{proposition}
\label{prop:unif}
Suppose $G = (V,E)$ satisfies Assumption \ref{ass:Gcond}(i)--(iv).
For any finite $A \subset V$, any $k \ge 1$, $s \in \Sigma_k$ and 
any sequence of finite rooted trees $(F^{(i)}, v^{(i)})_{i=1}^k$ we have
\eqnspl{e:uniperm}
{ &\lim_{n \to \infty} \P \left[ K_{n,A} = k,\, \sigma_{n,A} = s,\, 
     (F^{(i)}_{n,A}, v^{(i)}_{n,A}) = (F^{(i)}, v^{(i)}),\, 
     1 \le i \le k \right] \\
  &\qquad = \frac{1}{k!}
    \P \left[ K_A = k,\, 
     (F^{(i)}_A, v^{(i)}_A) = (F^{(i)}, v^{(i)}),\, 
     1 \le i \le k \right]. }
\end{proposition}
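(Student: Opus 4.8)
The plan is to combine the coupling already constructed in Lemma \ref{lem:highDcoupling} (extended to the finite set $A$ in place of a single point, via the data $K_{n,A}$, $(F^{(i)}_{n,A},v^{(i)}_{n,A})$) with a symmetrization argument that shows the ordering of the components through the sink is asymptotically uniform. First I would apply Lemma \ref{lem:highDcoupling} to get, on a single probability space, samples $t_n\sim\mu_n$ and $\bt\sim\mu$ with $K_{n,A}=K_A$ and $(F^{(i)}_{n,A},v^{(i)}_{n,A})=(F^{(i)}_A,v^{(i)}_A)$ for all $i$ with probability tending to $1$. This already yields the convergence of the joint law of $\bigl(K_{n,A},(F^{(i)}_{n,A},v^{(i)}_{n,A})_i\bigr)$ to that of $\bigl(K_A,(F^{(i)}_A,v^{(i)}_A)_i\bigr)$; what remains is to show that, conditionally on this data and on $\{K_{n,A}=k\}$, the permutation $\sigma_{n,A}$ — the ranking of the "meeting points" $v^{(i)}_{n,x}$ by their distance to the sink $s$ in $t_n$ — converges to the uniform distribution on $\Sigma_k$, and that it does so asymptotically independently of the subtree data.

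The heart of the matter is the uniformity of $\sigma_{n,A}$. Fix the enumeration $u_1,\dots,u_L$ of $\cup_{x\in A}\cN_x$ used by Wilson's method, and observe that under the coupling the components $t^{(i)}_{n,A}$ intersecting $\cup_{x\in A}\cN_x$ are generated as loop-erasures $\LE(S^j[0,\infty))$ of those walks $S^j$ with $j\in J$ (the indices for which $T^j=\infty$), while their connection pattern to the sink in the finite graph $G_n$ is governed by how these transient paths eventually exit $V_n$ and get absorbed at $s$. The rank of $v^{(i)}_{n,x}$ relative to $s$ is then controlled by the absorption behaviour of the corresponding walk after it leaves the (finite) region $C$ carrying $(F^{(i)}_{n,A},v^{(i)}_{n,A})$. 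I would argue that for each component the "time to reach the sink", suitably normalized, depends only on a tail event of the walk, and — crucially — that the $k$ distinct infinite paths producing the $k$ components are, after time $\tau^j_n$, running in disjoint regions (by Assumption \ref{ass:Gcond}(ii) and transitivity, exactly as used in \eqref{e:contained-infinite-paths}), so their exit/absorption behaviours are asymptotically independent of each other and of the data in $C$. The role of Assumption \ref{ass:Gcond}(iv) — all bounded harmonic functions constant — is to show that the exit measure of the walk from $V_n$, seen from the endpoint $v^{(i)}_{n,x}$, forgets where in $C$ it started: for a harmonic function $h_n$ recording the conditional probability of a given rank order, the boundary values (as the starting point varies within $C$) must converge to a constant, so in the limit the $k$ absorption "depths" become $k$ i.i.d.\ (continuous-valued, in an appropriate scaling limit) quantities, whose ranking is uniform on $\Sigma_k$ by exchangeability. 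This gives both the factor $1/k!$ and the product structure in \eqref{e:uniperm}.

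Concretely, the steps in order: (1) extend Lemma \ref{lem:highDcoupling} to $A$ and record the resulting convergence of the subtree data; (2) express $\sigma_{n,A}$, on the coupling event, in terms of the post-$\tau_n$ pieces of the walks $S^j$, $j\in J$; (3) show that conditionally on the event $\{K_{n,A}=k,\ (F^{(i)}_{n,A},v^{(i)}_{n,A})=(F^{(i)},v^{(i)})\}$ the conditional law of $\sigma_{n,A}$ can be written via a bounded harmonic function on $G$ whose argument is the starting vertex of the relevant walk segment inside $C$; (4) invoke Assumption \ref{ass:Gcond}(iv) to conclude this conditional law is asymptotically independent of $(F^{(i)},v^{(i)})$; (5) use the disjointness of the $k$ infinite paths (Assumption \ref{ass:Gcond}(ii)+transitivity) plus exchangeability to identify the limit as uniform on $\Sigma_k$; (6) assemble \eqref{e:uniperm}. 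I expect the main obstacle to be step (3)–(4): making rigorous the claim that the ranking probabilities are governed by a genuine bounded harmonic function of the walk's position, and that "asymptotically uniform ranking" survives the conditioning on the detailed subtree data — one must be careful that the conditioning (which fixes finitely many edges and the identity of the absorbing paths) does not bias the deep absorption behaviour, which is precisely where the Liouville property of $G$ is brought to bear.
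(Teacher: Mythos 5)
Your proposal correctly identifies the central mechanism, which is also the paper's: use Assumption \ref{ass:Gcond}(iv) to show that the harmonic (exit) measures on $\partial V_{n''}$ seen from different starting points in a fixed finite region are asymptotically equal in total variation (the paper extracts a non-constant bounded harmonic function from any failure of this), couple the post-exit continuations of the $k$ relevant walks to genuinely i.i.d.\ walks, and get the factor $1/k!$ from exchangeability, with near-independence from the subtree data because the conditioning events are measurable with respect to the initial walk segments. (A minor set-up difference: the paper starts the first $k$ Wilson walks at the prescribed meeting points $v^{(1)},\dots,v^{(k)}$ themselves, so that $\sigma_{n,A}$ is literally the ranking of the lengths $\length(\LE(S^i[0,\tau^i_n]))$; your formulation in terms of walks from $\cup_{x\in A}\cN_x$ and ``post-$\tau_n$ pieces'' makes step (2) harder to carry out cleanly.)

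However, there is a genuine gap at the point where you pass from ``the continuations are asymptotically i.i.d.'' to ``the ranking is uniform by exchangeability.'' The quantity being ranked is the \emph{total} distance $d_{t_n}(v^{(i)},s)$, which contains an initial segment (up to the exit from $V_{n''}$) that is conditioned on the subtree data $(F^{(i)},v^{(i)})$ and is not exchangeable across $i$. Exchangeability of the continuations alone does not give a uniform ranking: one must show that the pairwise differences of the continuation lengths exceed, with high probability, the discrepancy contributed by these initial segments. This is exactly where the paper invokes Proposition \ref{prop:fluct} (the lower bound on the fluctuations $|d^{(i,j)}_{n,A}|$), together with the loop-free point Lemma \ref{lem:loopfree}, which guarantees that with high probability there is a loop-free time shortly after $\tau^i_{n''}$, so that the loop-erasure decomposes additively and the influence of the initial segment on the length is bounded by a deterministic constant $M_2$; on the event $\{|d^{(i,j)}_{n,A}|>M_2\}$ one then has $\sigma_{n,A}=\tilde\sigma$ exactly, where $\tilde\sigma$ (with uniform tie-breaking) is exactly uniform on $\Sigma_k$. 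Your appeal to ``continuous-valued quantities in an appropriate scaling limit'' papers over precisely this step; no scaling limit is available or used, and without a fluctuation lower bound the argument fails --- as it must, since on the regular tree (where (i)--(iii) can hold in spirit but path lengths have no fluctuations) the permutation is \emph{not} asymptotically uniform, which is why the paper treats that case separately in Section \ref{sec:limitsIII}.
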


We prove Proposition \ref{prop:fluct} in the next section, 
and Proposition \ref{prop:unif} in Section \ref{ssec:unif}.
The short proof of Theorem \ref{thm:trans}, using 
the two Propositions, is at the end of Section \ref{ssec:unif}.

\subsection{Lower bound on fluctuations}
\label{ssec:fluct}

We start with some preparations for the proof of 
Proposition \ref{prop:fluct}. 
Let $k \ge 1$ and let $v^{(1)}, \dots, v^{(k)} \in V$ be 
fixed vertices. We analyze the event 
\eqn{e:k-event}
{ \{ K_x = k,\, v^{(1)}_x = v^{(1)}, \dots, v^{(k)}_x = v^{(k)} \} }
using Wilson's method. Let 
$u_1, \dots, u_L$ be an enumeration of
$\{ v^{(1)}, \dots, v^{(k)} \} \cup \left( \cup_{x \in A} \cN_x \right)$
such that $u_i = v^{(i)}$ for $i = 1, \dots, k$. Similarly to the
proof of Lemma \ref{lem:highDcoupling}, we couple
the algorithms in $G$ and in $G_n$ by using the same 
random walks in Wilson's method, with the walk $S^j$ starting at
$u_j$. (But note that this time the enumeration is different.) 
On the event \eqref{e:k-event}, for fixed $1 \le i < j \le k$, 
the occurrence of $|d^{(i,j)}_{n,A}| \le M$ implies that the lengths 
of the two (independent) paths 
$\gamma^{i} = \LE( S^{i} [0, \tau^i_n] )$ and
$\gamma^{j} = \LE( S^{j} [0, \tau^j_n] )$ 
differ by at most $M$. This will be unlikely, if there is any
fluctuation in the length of the paths, and we show that
this is the case whenever $G$ is not a tree. The proof will 
be based on some lemmas that follow. The first lemma makes 
a deterministic statement about the existence of a cycle with 
two infinite paths satisfying some requirements. 
The significance of the statement is that it will allow
us to construct two finite random walk paths of equal number 
of steps between two vertices such that the loop-erasures of the
paths have different lengths. This will be sufficient to establish 
non-trivial fluctuations in the length of the loop-erased walk.

In what follows, $o$ will denote a fixed vertex of $G$.

\begin{lemma}
\label{lem:cycle}
Let $G = (V, E)$ be an infinite vertex-transitive graph that 
is not a tree. There exists a cycle $C = \{t_1,\dots,t_L\}$ in $G$, 
vertices $t_I$, $t_J$ in $C$ and infinite paths 
$\pi = \{ \pi_0, \pi_1, \dots \}$ and 
$\rho = \{ \rho_0, \rho_1, \dots \}$ in $G$ such that:\\
(i) $t_I$ and $t_J$ are not antipodal in $C$, i.e.~$I \not= J + L/2$ 
mod $L$;\\
(ii) $\pi(0) = t_I$ and $\rho(0) = t_J$;\\
(iii) $C$, $\pi[1,\infty)$ and $\rho[1,\infty)$ are disjoint. 
\end{lemma}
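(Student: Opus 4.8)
\textbf{Proof plan for Lemma \ref{lem:cycle}.}

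The plan is to exploit the fact that a connected graph which is not a tree contains at least one cycle, and that vertex-transitivity lets us reproduce local structure everywhere. First I would fix any cycle $C_0 = \{ w_1, \dots, w_L \}$ in $G$; such a cycle exists because $G$ is connected and not a tree. I would like to take $t_I$ and $t_J$ to be two \emph{adjacent} vertices on $C_0$ (say $w_1$ and $w_2$), which automatically satisfies (i) as long as $L \ge 3$ (adjacent vertices on a cycle of length $\ge 3$ are never antipodal), and which is the cleanest way to guarantee the non-antipodal condition. The remaining work is to attach the two infinite disjoint paths $\pi$ and $\rho$ emanating from $t_I$ and $t_J$ while keeping everything disjoint from $C$ and from each other except at the prescribed endpoints.

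For the construction of the infinite paths I would proceed as follows. Since $G$ is infinite, locally finite and connected, there is an infinite self-avoiding path in $G$ (by König's lemma, applied to the locally finite connected infinite graph, or directly since balls grow without bound); more carefully, I can find an infinite simple path $\pi$ starting at $t_I$. The issue is that $\pi$ might re-enter $C$ or, later, that $\rho$ might intersect $\pi$. To handle this, I would first choose $\pi$ to be a simple path from $t_I$ to infinity, then let $t_I' $ be the last vertex of $\pi$ lying on $C \cup \{t_J\}$ and truncate, re-attaching the initial segment along $C$; after a finite modification I may assume $\pi$ meets $C$ only at $t_I$ and avoids $t_J$ entirely, and $\pi[1,\infty)$ is disjoint from $C$. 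For $\rho$: now work in the graph $G' := G \setminus (C \cup \pi[1,\infty) \setminus \{t_J\})$ — but one must check $G'$ still contains an infinite path out of $t_J$. This is the only place genuine graph-theoretic care is needed: removing the infinite set $\pi[1,\infty)$ could in principle disconnect $t_J$ from infinity. Here is where I would use vertex-transitivity together with the structure: actually, rather than deleting an infinite set, I would build $\rho$ greedily so that it \emph{always} stays far from $\pi$, using that $\pi$ is eventually a "thin" set. A clean way: choose $\pi$ first; since $G$ has infinitely many vertices and $\pi$ uses only a set of density issues aside, one can in fact find $\rho$ by first going from $t_J$ into the complement of a large ball containing the relevant initial segment of $\pi$ and then following an infinite ray; if unimodularity or amenability were needed this would be delicate, but for a bare existence statement one can argue by contradiction: if $t_J$ had no infinite simple path in $G \setminus (C \cup \pi[1,\infty))$, then $t_J$ would lie in a finite component of that graph, which combined with transitivity and the geometry of $\pi$ forces $G$ to be quasi-isometric to a line — but a graph quasi-isometric to $\Z$ with a cycle still admits the claimed configuration, handled directly. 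I expect most readers will be satisfied with the greedy construction plus a remark.

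\textbf{Main obstacle.} The only real difficulty is disjointness of the two infinite paths from each other and from the cycle — attaching one infinite path is trivial, but guaranteeing that the second one escapes to infinity without ever touching the (infinite) first path requires knowing that deleting the first path does not cut $t_J$ off from infinity. I would resolve this by: (a) taking $\pi$ and $\rho$ to diverge immediately by going in "opposite directions" relative to a large ball $B$ around $o$ containing $C$, so that after leaving $B$ they live in different (or at least separated) parts of $G$; and (b) if $G$ is such that this is impossible — forcing a line-like structure — treating that degenerate case by hand, where $C$ is a finite "bubble" on a two-ended graph and the two rays simply go to the two ends. I would also double-check that condition (i) is compatible with the degenerate case: on a cycle $C$ with $L = 3$ every pair of distinct vertices is adjacent hence non-antipodal, and for general $L$ adjacent vertices suffice, so (i) is never an obstruction. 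Conditions (ii) and (iii) are then immediate from the construction.
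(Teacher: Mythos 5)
There is a genuine gap, and it sits exactly at the point you yourself flag as the ``main obstacle'': you never actually prove that two disjoint infinite rays can be attached at the two vertices you have chosen in advance. Fixing $t_I,t_J$ first (as adjacent vertices of some cycle) and then hoping to find $\pi$ and $\rho$ reverses the logical order in a way that breaks the argument: whether $t_J$ can escape to infinity in $G\setminus(C\cup\pi[1,\infty))$ depends on the choice of $\pi$, and an arbitrary or greedy $\pi$ can genuinely cut $t_J$ off. For instance, on the ladder $\Z\times K_2$ with $C$ the $4$-cycle on columns $0,1$, a path $\pi$ from $(1,0)$ that zig-zags rightwards through both rows leaves $(1,1)$ in a finite component, so ``some'' choice of $\pi$ must be made jointly with $\rho$; your proposal gives no mechanism for this. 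The fallback you offer --- ``if $t_J$ is always cut off, then $G$ is quasi-isometric to a line, and that case is handled directly'' --- is an unproved structural claim (nothing in your argument derives a two-ended or line-like structure from the failure of one particular greedy construction), and even granting it, the degenerate case is not actually handled: you would still need to produce the two rays avoiding $C$ and each other there, which is not automatic. Note also that even the existence of an infinite self-avoiding ray, let alone two disjoint ones, is something you should justify from transitivity/local finiteness rather than take for granted.

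For comparison, the paper resolves precisely this difficulty by obtaining the two rays \emph{first} and for free: by transitivity there is a bi-infinite self-avoiding path through $o$, and its two halves beyond their last visits to the cycle are automatically disjoint from each other. The price is that the attachment points $v,w$ (the last exits from a \emph{minimal} cycle $C$ through $o$) are then forced, and may be antipodal; the bulk of the paper's proof is a case analysis ($|C|$ odd, $|C|=4$, $|C|\ge 6$ even) that, when $v,w$ are antipodal, uses a third neighbour of a cycle vertex and the minimality of $C$ to build a modified cycle $C'$ and re-routed rays with non-antipodal attachment points. So your plan is not a simplification of the paper's argument with details omitted; it omits the one step (simultaneous construction of the cycle, the attachment points, and the two disjoint rays) that constitutes the actual content of the lemma. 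If you want to salvage your approach, you should start from a bi-infinite self-avoiding path as the source of the two disjoint rays and then repair the antipodality of the attachment points, i.e.\ essentially rejoin the paper's route.
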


\begin{proof}
The conditions imply that the vertex degree is $\ge 3$.
Let $C$ be a cycle of length $\ge 3$ in $G$
that passes through $o$. We assume that $C$ has minimal length.
Using transitivity, we see that there exists a bi-infinite path 
$\dots, s_{-2}, s_{-1}, s_0 = o, s_1, s_2, \dots$ in $G$. Let 
\eqnsplst
{ v 
  := s_{-k}, \quad \text{where $k = \max \{ r \ge 1: s_{-r} \in C \}$;} \\
  w 
  := s_l, \quad \text{where $l = \max \{ r \ge 1 : s_r \in C \}$.} } 
If $C$ has odd length, we can set $t_i = v$, $t_j = w$,
$\pi = \{ s_{-k}, s_{-(k+1)}, \dots \}$, $\rho = \{ s_l, s_{l+1}, \dots \}$.
Henceforth assume that $|C| \ge 4$ and $|C|$ even. If $v$ and $w$ are not 
antipodal, there is nothing further to prove. Assume that $v$ and $w$ 
are antipodal, and let $u \in C$ be a neighbour of $v$. 

\emph{Case (a): $|C| = 4$.} Let $u'$ be the other neighbour of $v$ in $C$.
Since $u$ has at least three distinct 
neighbours, $u$ has a neighbour $u_1 \not= v,w$.

If $u_1 = u'$, then $\{v, u, u_1\}$ is a cycle of length $3$, 
contradicting the minimality of $C$.

If $u_1 = s_{-k_1}$ for some $k_1 > k$, then the triple 
$C' := \{ u_1 = s_{-k_1}, s_{-k_1+1}, \dots, s_{-k} = v, u \}$,
$\pi := s [-k_1, -(k_1+1), \dots)$, 
$\rho := \{ u, w = s_l, s_{l+1}, \dots \}$
satisfies the requirements of the Lemma.
Similarly, if $u_1 = s_{l_1}$ for some $l_1 > l$, we are done.

If none of the above holds, select an infinite self-avoiding path 
\eqnst
{ u, u_1, u_2, u_3 \dots; } 
such a path is easily seen to exist, using transitivity. 
If this path is disjoint from 
\eqn{e:B}
{ B
  := C \cup \{ s_{-k}, s_{-(k+1)}, \dots \} 
     \cup \{ s_l, s_{l+1}, \dots \}, } 
then the triple $C$, $\pi = \{ s_{-k}, s_{-(k+1)}, \dots \}$ 
and $\rho = \{ u, u_1, u_2, \dots \}$ satisfies the 
requirements of the Lemma. 
Therefore, suppose that for some $m \ge 1$ we have 
$u_m \in B$, and let $m$ be the smallest index 
with this property.

If $u_m = u'$, then the triple
$C' = \{v, u, u_1, \dots, u_m = u'\}$, 
$\pi = \{ v = s_{-k}, s_{-(k+1)}, \dots \}$,
$\rho = \{ u, w = s_l, s_{l+1}, \dots \}$ satisfies
the requirements.
If $u_m = v$, then necessarily $m \ge 2$ and the triple 
$C' = \{ v, u, u_1, \dots, u_{m-1} \}$,
$\pi = \{ s_{-k}, s_{-(k+1)}, \dots \}$,
$\rho = \{ u, w = s_l, s_{l+1}, \dots \}$ works.
If $u_m = s_{-k_1}$ for some $k_1 > k$, then consider 
the cycle 
$C' = \{ v, u, u_1, \dots, u_{m} = s_{-k_1}, s_{-k_1+1}, \dots, s_{-k-1} \}$.
If $u_m$ and $u$ are not antipodal in $C'$, then we can set
$\pi = \{ s_{-k_1}, s_{-(k_1+1)}, \dots \}$ and
$\rho = \{ u, w = s_l, s_{l+1}, \dots \}$. If they are antipodal, 
then $u_m$ and $v$ are not antipodal in $C'$, so we can
replace $\rho$ by $\rho' = \{ v, u', w = s_l, s_{l+1}, \dots \}$.
Similarly, we are done if $u_m = s_{l_1}$ for some $l_1 \ge l$.
This complete Case (a).

\emph{Case (b): $|C| \ge 6$, $|C|$ even.} Again we start by
letting $u$ be a neighbour of $v$ in $C$, and let $\bar{u}$ be 
the neighbour of $u$ in $C$ different from $v$. 
Let $u_1$ be a neighbour of $u$ different from $v$, $\bar{u}$.
Let $b$ denote the path in $C$ from $u$ to $w$ passing through $\bar{u}$, and
let $a$ denote the path in $C$ from $v$ to $w$ not passing through $u$.

Select a self-avoiding path $u, u_1, u_2, \dots$.
If this path does not intersect $B$ (as defined in \eqref{e:B}), 
we are done similarly to Case (a).
If there is an intersection, let the first one be $u_m$ ($m \ge 1$).

If $u_m = s_{-k_1}$ for some $k_1 > k$, we consider
$C' = \{s_{-k_1}, s_{-k_1+1}, s_{-k} = v, u, u_1, \dots, u_{m-1}\}$.
If $u_m$ and $u$ are not antipodal in $C'$, we set
$\pi = \{ s_{-k_1}, s_{-(k_1+1)}, \dots \}$ and
$\rho = b \cup \{ s_{l+1}, s_{l+2}, \dots \}$. If they are
antipodal, then $u_m$ and $v$ are not antipodal, and we
can replace $\rho$ by $\rho' = a \cup \{ s_{l+1}, s_{l+2}, \dots \}$.
If $u_m = v$ (and necessarily $m \ge 2$), then we set
$C' = \{ v, u, u_1, \dots, u_{m-1}\}$, 
$\pi = \{ s_{-k}, s_{-(k+1)}, \dots \}$ and 
$\rho = b \cup \{ s_{l+1}, s_{l+2}, \dots \}$. 

If $u_m = s_{l_1}$ for some $l_1 > l$, then the triple
$C$, $\pi = \{ v = s_{-k}, s_{-(k+1)}, \dots \}$, 
$\rho = \{ u, u_1, \dots, u_m = s_{l_1}, s_{l_1+1}, \dots \}$
works. 
If $u_m = w$, we note that the path $\{ u, u_1, \dots, u_m = w \}$ 
has to be longer than $b$, otherwise their union gives
a cylce shorter than $C$. In particular in the cycle 
$C' = \{ u, u_1, \dots, u_m \} \cup b$ the vertices $u$ and $w$ 
are not antipodal. Hence the choice
$\pi = \{ u, v = s_{-k}, s_{-(k+1)}, \dots \}$ and 
$\rho = \{ w = s_l, s_{l+1}, \dots \}$ works. 

If $u_m \in a \setminus \{ v, w \}$, we can find a cycle containing
$u, v$, and part of $a$, and use $\pi = \{ v = s_{-k}, s_{-(k+1)}, \dots \}$
and $\rho = b \cup \{ s_{l+1}, s_{l+2}, \dots\}$.
Finally, assume that $u_m \in b \setminus \{ u, w \}$, and let
$c = \{ u, u_1, \dots, u_m \}$. Let $b'$ be the subpath of $b$ from
$u$ to $u_m$. If $|c| = |b'|$, then $c \cup b'$ yields a cycle
shorter than $C$, a contradiction (note that the case $m = 1$ 
is excluded here, since $u_1 \not= \bar{u}$). 
Therefore, $|c| \not= |b'|$, and hence
$u$ and $u_m$ are not antipodal in the cycle $c \cup b'$. Therefore,
we can set $\pi = \{ u, v = s_{-k}, s_{-(k+1)}, \dots \}$ and 
$\rho = (b \setminus b') \cup \{ s_l, s_{l+1}, \dots \}$.
\end{proof}

Consider the configuration constructed in Lemma \ref{lem:cycle}. 
We assume the labeling is such that $I = 1$. 
Shifting by an automorphism we may assume that $\pi(1) = o$.
Let $G_0$ denote the finite graph consisting of the cycle
$\{ t_1, t_2, \dots, t_L \}$ together with the edges 
$\{ \pi(1), t_1 \}$ and $\{ \rho(1), t_J \}$.
We define two nearest neighbour paths in $G_0$ such that:\\
(i) they both start at $\pi(1)$ and end at $\rho(1)$;\\
(ii) they both visit each edge of $G_0$;\\
(iii) they have the same number of steps $2L + J + 1$;\\
(iv) their loop-erasures have different lengths.\\
Let
\eqnsplst
{ \beta_1 
  &:= [\pi(1), t_1, t_2, \dots, t_L, t_1, t_L, t_{L-1}, 
      \dots, t_J, t_{J-1}, \dots, 
      t_2, t_1, t_2, \dots, t_J, \rho(1) ] \\
  \beta_2
  &:= [\pi(1), t_1, t_2, \dots, t_L, t_1, t_L, t_{L-1}, 
      \dots, t_J, 
      \rho(1), t_J, \dots, \rho(1), t_J, \rho(1)]. }
Here $\rho(1), t_J$ is repeated as many times as necessary
so that the length of $\beta_2$ is $2L + J + 1$. The loop-erasure of
$\beta_1$ has length $L - J + 3$, while the loop-erasure of
$\beta_2$ has length $J + 1 \not= L - J + 3$. 

We want to show that a long loop-erased random walk in $G$ 
will contain copies of $\LE(\beta_1)$ and $\LE(\beta_2)$ 
with positive densities. We can do this by an adaptation 
of an argument of Lawler \cite[Theorem 7.7.2]{Lawler}.
For this it will be convenient to 
define a bi-infinite simple random walk by letting
$\{ S(m) \}_{m \ge 0}$ and $\{ S(-m) \}_{m \ge 0}$ be 
independent realizations of simple random walk on $G$
starting at $o$.

We set $M = 2L + J + 1$, and we consider the blocks 
\eqnst
{ B_k 
  = [ S(Mk), S(Mk+1), \dots, S(M(k+1)) ]. } 
Let $\AUT_o$ denote the stabilizer of $o$ in $\AUT(G)$.
By \cite[Lemma (1.27)]{Woess}, $\AUT_o$ is compact, and
hence it carries a right-invariant Haar measure $\la$
of total mass $1$. For each $x \in V$ we fix an automorphism 
$\phi_x$ that takes $o$ to $x$.

\begin{definition}
\label{def:good}
We say that an index $j \ge 0$ is \emph{good}, if the following 
conditions are satisfied:
\begin{itemize}
\item[(a)] For some $\psi \in \AUT_o$ we have 
$\psi \phi_{S(Mj)}^{-1} B_j = \beta_1$ or 
$\psi \phi_{S(Mj)}^{-1} B_j = \beta_2$;
\item[(b)] $S(-\infty, Mj) \cap B_j = \es$ and
  $S(M(j+1), \infty) \cap B_j = \es$;
\item[(c)] $S(-\infty,Mj] \cap S[M(j+1), \infty) = \es$.
\end{itemize}
\end{definition}

Let $b := \P [ \text{$0$ is good} ]$. 
In what follows, we write 
\eqnst
{ B(x,k)
  := \{ y \in V : \dist(x,y) \le k \}. }
We also introduce the notation
$\xi_B = \inf \{ n \ge 0 : S(n) \in B \}$ for the 
hitting time of $B$ by $S$. 
The following lemma shows that good indices occur
with positive frequency.

\begin{lemma}
\label{lem:goodblocks}
Suppose the graph $G = (V,E)$ satisfies 
Assumption \ref{ass:Gcond}(i)--(ii). \\
(1) We have $b > 0$.\\
(2) For any $\eps > 0$, we have
\eqnst
{ \P \left[ \parbox{7.5cm}{$\exists K_0$ 
      $\forall K \ge K_0$
      there are at least $(b-\eps)K$
      good indices among $0, \dots, K-1$} \right] 
  = 1. }  
\end{lemma}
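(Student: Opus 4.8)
The plan is to prove part (1) first and then derive part (2) from it by an ergodic/Borel--Cantelli-type argument adapted to the weak dependence between distant blocks.

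For part (1), I would show that each of conditions (a), (b), (c) in Definition \ref{def:good} has probability bounded away from $0$, and that they can be made to occur simultaneously. For condition (a): by transitivity and the construction preceding the lemma, the path $\beta_1$ (or $\beta_2$) is a specific nearest-neighbour path of length $M$ in $G$ starting at $o$; simple random walk follows any prescribed finite path with positive probability, and translating by $\phi_{S(Mj)}$ shows that $S$ restricted to a block of length $M$ equals a $\phi$-translate of $\beta_1$ or $\beta_2$ (up to the compact ambiguity $\psi\in\AUT_o$) with probability $\ge p_0>0$, uniformly in $j$ by transitivity. For conditions (b) and (c): these say that the block $B_j$ is not revisited by the walk before or after the block, and that the two one-sided halves of the walk past the block do not meet. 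Here I would invoke Assumption \ref{ass:Gcond}(ii): since two independent simple random walks from a common vertex intersect only finitely often a.s., the walk $S$ run forward from the end of $B_j$ and (a fresh copy using the same law started appropriately) the walk run backward from the start of $B_j$ will, with probability bounded below, make their finitely many mutual intersections and the finitely many returns to $B_j$ all \emph{before} time $0$ in the relevant parametrization — more precisely, by absorbing the finitely many bad intersections, one gets that with probability at least some $p_1>0$ the walk leaves a neighbourhood of $B_j$ and never returns, and the two sides stay disjoint. Combining (a) with (b)--(c) via the Markov property at the two endpoints of $B_j$ (the past and the future of the block are conditionally independent given the two endpoints, and conditions (b),(c) concern only past and future) gives $b=\P[\text{$0$ is good}]\ge p_0 p_1'>0$.

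For part (2), the issue is that goodness of index $j$ depends on the whole bi-infinite walk, so the events $\{j\text{ good}\}$ are not independent. The approach is to note that goodness is a shift-invariant-in-law property of a stationary (under the block shift) process, and that by conditions (b)--(c) the event $\{j\text{ good}\}$ is in fact determined, up to a null set, by the walk together with the certificate that $B_j$ is never revisited; so conditionally on a ``regeneration'' structure the good blocks behave like an i.i.d.\ or at least ergodic sequence with frequency exactly $b$. Concretely I would apply the ergodic theorem (Birkhoff) to the $M$-step block shift acting on the bi-infinite walk: the indicator of goodness is a measurable function of the shifted configuration, the shift is measure-preserving and ergodic for simple random walk on a transitive graph with the constant-harmonic-functions / $0$--$2$ law features available here, hence $\frac1K\sum_{j=0}^{K-1}\mathbf{1}[j\text{ good}]\to b$ a.s., which gives the stated statement for any $\eps>0$ with a suitable (random) $K_0$. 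Alternatively, if ergodicity of the block shift is awkward to quote directly, one can use that the good events have a finite-range-of-dependence approximation (truncate conditions (b),(c) to the event that the relevant returns/intersections happen within distance $R$ and time $R$ of $B_j$, which has probability $\ge b-\eps/2$ and makes blocks at index-distance $\gtrsim R/M$ independent), then apply a second-moment / Borel--Cantelli argument along the lines of Lawler \cite[Theorem 7.7.2]{Lawler} to conclude the liminf of the frequency is $\ge b-\eps$ a.s.

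The main obstacle is the dependence handled in part (2): making rigorous that the good indices occur with asymptotic frequency $b$ despite the long-range correlations induced by the walk potentially wandering back near old blocks. The cleanest route is the ergodic theorem applied to the shift on the path space, so the real work is identifying goodness as a genuine function of the (stationary, ergodic) shifted walk and checking the ergodicity hypothesis in the transitive setting; the truncation/finite-range alternative is more hands-on but also viable and is exactly the kind of argument carried out in Lawler's proof that loop-erased walk contains prescribed patterns with positive density.
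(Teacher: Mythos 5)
Your overall two-step shape (positive probability for a single block, then a density/ergodicity argument) matches the paper, but both key steps are left unproved as written. In part (1), the passage from Assumption \ref{ass:Gcond}(ii) (``a.s.\ finitely many intersections'') to ``with probability $p_1>0$ the two halves of the walk avoid $B_j$ and each other'' is precisely the hard point, and ``absorbing the finitely many bad intersections'' is not an argument: almost sure finiteness of the number of intersections/returns does not give positive probability that this number is \emph{zero}, started from the two prescribed endpoints of the block, simultaneously for both sides and jointly with never re-entering $B_j$. (Also note that condition (c) of Definition \ref{def:good} couples the past and the future of the block, so the Markov factorization you invoke does not reduce the problem by itself.) The paper handles this with two ingredients absent from your sketch: a uniform lower bound $\delta>0$, over \emph{all} pairs of distinct starting vertices, for the probability that two independent walks never meet, see \eqref{e:assump}, whose proof is itself nontrivial (L\'evy's $0$--$1$ law together with auxiliary walks); and the two disjoint infinite rays $\pi$ and $\rho$ produced by Lemma \ref{lem:cycle}, along which the backward and forward walks are forced, at a positive but fixed cost, to exit a large ball $B(o,K)$ while (b)--(c) hold locally; $K$ is then chosen by transience, see \eqref{e:notcomeback}, so that the return probability to $G_0$ from outside $B(o,K)$ is at most $\delta/4$, and a union bound gives conditional probability at least $\delta/2$. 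The rays $\pi,\rho$ are constructed in Lemma \ref{lem:cycle} exactly for this purpose, and your plan does not use them.

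In part (2), your primary route (Birkhoff for the $M$-block shift) runs into the difficulty the paper flags explicitly: the law of the bi-infinite walk rooted at $o$ is \emph{not} invariant under the time shift by $M$ (the shifted path is rooted at the random vertex $S(M)$), and on a general vertex-transitive graph there is no canonical automorphism recentering $S(Mk)$ to $o$, so the recentered block sequence is not literally stationary. The paper repairs this by fixing arbitrary $\phi_x$ and averaging over the stabilizer $\AUT_o$ with i.i.d.\ Haar-distributed $\Psi_k$, and then proving in the Appendix that the resulting path-valued process $X_k$ is stationary and mixing; only then does the ergodic theorem apply, via the observation that $j$ is good iff $0$ is good relative to $X_j$. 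Your appeal to ``constant harmonic functions / $0$--$2$ law features'' is also off target: only Assumption \ref{ass:Gcond}(i)--(ii) is in force for this lemma, and ergodicity is not obtained that way. Your fallback route (truncate (b)--(c) to a window of radius $R$, get independence of well-separated blocks, and run a second-moment/Borel--Cantelli argument as in Lawler) is a genuinely viable alternative --- it is the argument the paper itself points to before Definition \ref{def:good} --- but it still needs part (1) as input and the monotonicity $\P[\text{window-good}]\downarrow b$ as $R\to\infty$ spelled out; as presented, the proposal leaves both decisive steps unestablished.
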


\begin{proof}[Proof of (1).] 
Assumption \ref{ass:Gcond}(ii) implies that $G$ is
transient, in particular. Transience and reversibility
of the simple random walk imply that for any finite
$B \subset V$ we have 
\eqn{e:inf-hit}
{ \lim_{K \to \infty} \sup_{z \not\in B(o,K)}
    \P [ \xi_B < \infty \,|\, S(0) = z ]
  = 0. } 
Let $\{ S^{(1)}(n) \}_{n \ge 0}$ and $\{ S^{(2)}(n) \}_{n \ge 0}$ 
be independent simple random walks on $G$, with possibly
different initial states. It is easy to see that 
Assumption \ref{ass:Gcond}(ii) and transitivity implies 
\eqnst
{ \P \left[ S^{(1)}(0,\infty) \cap S^{(2)}(0,\infty) = \es 
  \,|\, S^{(1)}(0) = o = S^{(2)}(0)\right]
  =: \delta_0 
  > 0. }
We show that we also have
\eqn{e:assump}
{ \inf_{x \not= y} \P \left[ S^{(1)}[0,\infty) \cap S^{(2)}[0,\infty) = \es 
  \,|\, S^{(1)}(0) = x,\, S^{(2)}(0) = y \right]
  =: \delta 
  > 0. }
By transitivity, we may assume $x = o$. By \eqref{e:inf-hit}, we
can find $K_0$ such that $\dist(o,y) \ge K_0$ implies 
$\P [ \xi^{(2)}_o < \infty \,|\, S^{(2)}(0) = y ] < \delta_0/2$,
where $\xi^{(2)}_o$ denotes the hitting time of $o$ by $S^{(2)}$.
Let $\{ S^{(3)}(n) \}_{n \ge 0}$ and $\{ S(n) \}_{n \ge 0}$ 
be a third and a fourth independent simple
random walk, both starting at $o$. Assume the event 
\eqnst
{ A_{1,3}
  = \{ S^{(1)}(0,\infty) \cap S^{(3)}(0,\infty) = \es \}. }
By L\'evy's $0$--$1$ law, a.s.~on the event $A_{1,3}$ we have
\eqnst
{ \lim_{n \to \infty} 
     \P [ S^{(3)}[0,\infty) \cap S^{(1)}[0,\infty) = \es \,|\, 
     S^{(3)}[0,n],\, S^{(1)} ] 
  = 1. }
In particular, a.s.~on $A_{1,3}$ the random variables
\eqnsplst
{ X_{1,3}
  &= \inf_{z \in S^{(3)}(0,\infty)} 
    \P [ S[0,\infty) \cap S^{(1)}[0,\infty) = \es \,|\, 
    S(0) = z,\, S^{(3)} ] \\
  X_{3,1}
  &= \inf_{z \in S^{(1)}(0,\infty)} 
    \P [ S[0,\infty) \cap S^{(3)}[0,\infty) = \es \,|\, 
    S(0) = x,\, S^{(1)} ] }
are positive. Hence we can find $c > 0$ and 
$0 < \delta' < \delta_0/4$ such that 
\eqn{e:good-event}
{ \P [ A_{1,3},\, X_{1,3} \ge c,\, X_{3,1} \ge c,\, 
     \xi^{(2)}_o = \infty ]
  \ge \delta'. }
On the event in \eqref{e:good-event}, either 
$S^{(2)}$ never hits $S^{(1)}[0,\infty) \cup S^{(3)}[0,\infty)$, 
or if it hits one of the paths, then with conditional 
probability at least $c$, its continuation from the first 
hitting point does not hit the other path.
By symmetry of the roles of $S^{(1)}$ and $S^{(3)}$, we get
$\P [ S^{(1)}[0,\infty) \cap S^{(2)}[0,\infty) = \es ] \ge c \delta'$.
The statement involving all $y \not= o$ now follows.

We continue with the proof of statement (1).
The probability that $S[0,M]$ traces out exactly 
$\beta_1$ or $\beta_2$ is positive. Assume that this
occurs. Consider some large $K$, and let $A_K$ be the event that 
$\{ S(M+k) \}_{k \ge 0}$ and $\{ S(-k) \}_{k \ge 0}$
follow the paths $\rho$ and $\pi$, respectively, constructed in 
Lemma \ref{lem:cycle} until they both leave $B(o,K)$,
at vertices $x$ and $y$. By \eqref{e:inf-hit}, we can choose $K$ 
large enough so that 
\eqn{e:notcomeback}
{ \sup_{z \not\in B(o,K)} 
    \P [ \xi_{G_0} < \infty \,|\, S(0) = z ] 
  \le \delta/4. }
Due to \eqref{e:notcomeback} and \eqref{e:assump},
the conditional probability given $A_K$, that the walks 
satisfy the requirements (b) and (c) is at least 
$\delta/2$. This proves part (1).

\emph{Proof of (2).} We want to apply the ergodic 
theorem. A technical difficulty is that there
may be no canonical way to ``translate'' a vertex 
$x \in V$ back to $o$. Hence after shifting the path by 
$\phi_x^{-1}$, we average over $\AUT_o$,
which is possible, since $\AUT_o$ is compact.
This way we can define a certain path-valued 
stationary process. Let $\Psi_0, \Psi_1, \dots$ be
an i.i.d.~sequence, independent of the random walk, 
with each element distributed according to $\la$. Put 
\eqnst
{ X_k(m)
  := \Psi_k \phi_{S(Mk)}^{-1} S(Mk + m), 
     \quad -\infty < m < \infty,\, k \ge 0. }
We claim that the path-valued sequence 
$\{ X_k(\cdot) \}_{k \ge 0}$ 
is stationary and mixing (on the space of paths we consider
the topology of pointwise convergence and the induced Borel
$\sigma$-field). The somewhat tedious 
proof of this intuitive claim is deferred to the Appendix.

The proof of (2) is now straightforward from part (1) 
and the ergodic theorem, noting that $j$ is good if and 
only if $0$ is good relative to the path $X_j$.
\end{proof}

A time $-\infty < j < \infty$ is called \emph{loop-free}
for $S$ if $S(-\infty,j] \cap S(j,\infty) = \es$. 
The significance of loop-free points is that loop-erasure
on the two sides of a loop-free point do not influence
each other. Note that if $k \ge 0$ is good, then 
$kM$ and $(k+1)M$ are loop-free. This observation and 
Lemma \ref{lem:goodblocks}
immediately implies the following lemma.

\begin{lemma}
\label{lem:loopfree}
Suppose the graph $G = (V,E)$ satisfies 
Assumption \ref{ass:Gcond}(i)--(ii). There exists
$b' > 0$ such that 
\eqnst
{ \P \left[ \parbox{7.5cm}{$\exists K_0$ 
      $\forall K \ge K_0$
      there are at least $b' K$
      loop-free points among $0, \dots, K-1$} \right] 
  = 1. }  
\end{lemma}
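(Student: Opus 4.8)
The plan is to read off Lemma \ref{lem:loopfree} directly from Lemma \ref{lem:goodblocks}(2) by converting a lower bound on the number of good block-indices into a lower bound on the number of loop-free integer times. First I would record the elementary observation already noted in the text: if $k \ge 0$ is good in the sense of Definition \ref{def:good}, then both $kM$ and $(k+1)M$ are loop-free for $S$. Indeed, conditions (b) and (c) of goodness say precisely that $S(-\infty, Mk)$ avoids the block $B_k$, that $S(M(k+1), \infty)$ avoids $B_k$, and that $S(-\infty, Mk]$ and $S[M(k+1), \infty)$ are disjoint. Splitting $S(Mk, \infty)$ into the portion lying in $B_k$ and the portion strictly after time $M(k+1)$, and using that by condition (a) the block $B_k$ is a bijective image of $\beta_1$ or $\beta_2$ and hence visits each of its two endpoints exactly once, one gets $S(-\infty, Mk] \cap S(Mk, \infty) = \es$, i.e.~$Mk$ is loop-free; the statement for $M(k+1)$ follows the same way after splitting $S(-\infty, M(k+1)]$ into $S(-\infty, Mk)$ and $B_k$.

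Next I would fix $\eps := b/2$ in Lemma \ref{lem:goodblocks}(2); since $b > 0$ by part (1) of that lemma, this yields, almost surely, a (random) $K_0$ such that for every $K \ge K_0$ there are at least $(b/2)K$ good indices among $0, 1, \dots, K-1$. For each such good $k$ the time $kM$ is loop-free, and the map $k \mapsto kM$ is injective, so among the integer times $\{ 0, 1, \dots, (K-1)M \}$ there are at least $(b/2)K$ loop-free points.

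Finally, for an arbitrary large integer $N$ I would set $K := \lfloor N/M \rfloor$. Once $N$ is large enough that $K \ge K_0$, the previous step produces at least $(b/2)K \ge (b/2)(N/M - 1)$ loop-free points among $0, 1, \dots, N-1$ (note $(K-1)M \le N-1$), which is at least $b' N$ with, say, $b' := b/(4M)$ for $N \ge 2M$. Enlarging the threshold $K_0$ to also force $N \ge 2M$ and $\lfloor N/M \rfloor \ge K_0$ gives the asserted almost sure statement for this $b'$. There is essentially no obstacle here: the only points requiring a little care are the half-open-interval bookkeeping in the loop-free observation and avoiding over-counting when passing from block indices to integer times, the latter being sidestepped entirely by using only the left endpoints $kM$. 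I expect the whole argument to be a few lines.
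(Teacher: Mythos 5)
Your proof is correct and takes essentially the same route as the paper, which also deduces the lemma immediately from the observation that a good index $k$ makes both $kM$ and $(k+1)M$ loop-free, combined with Lemma \ref{lem:goodblocks}(2), with the factor $1/M$ absorbed into $b'$ exactly as in your bookkeeping. One small quibble: $\beta_2$ revisits its terminal vertex $\rho(1)$, so the blocks do not visit \emph{both} endpoints exactly once, but your argument only uses that the initial vertex of the block is not revisited (true for both $\beta_1$ and $\beta_2$) together with conditions (b)--(c), so the conclusion stands.
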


The lower bound on the fluctuations can now be achieved
by conditioning on ``all information outside the 
good blocks''. In order to make this precise, for each
good index $k \ge 0$, we choose $\psi_k \in \AUT_o$ 
such that $\psi_k \phi^{-1}_{S(kM)} B_k \in \{ \beta_1, \beta_2 \}$.
Note that since $\beta_1$ and $\beta_2$ both traverse 
$G_0$, if $\psi'_k$ is another such automorphism, then
$\psi'_k \psi_k^{-1}$ fixes $G_0$ pointwise. In particular,
$\psi_k^{-1}\vert_{G_0} \equiv \psi'^{-1}_k\vert_{G_0}$, where
$\vert_{G_0}$ denotes restriction to $G_0$.
We define the $\sigma$-algebra 
$\cG$ generated by the following random objects: 
\eqnspl{e:conditioning}
{ &S(kM), \quad k \ge 0; \\
  &Y_k 
  := I[\text{$k$ is good}], \quad k \ge 0; \\
  &\psi_k^{-1}\vert_{G_0} \text{ for $k \ge 0$ such that $Y_k = 1$}; \\
  &\text{the \emph{paths} $B_{k'}$} 
  = [ S(k'M), S(k'M+1), \dots, S((k'+1)M) ], \\ 
  &\qquad \text{for $k' \ge 0$ such that $Y_{k'} = 0$}; \\  
  &S(-\infty,0]. }

\begin{lemma}
\label{lem:condindep}
Given $\cG$, the good blocks are conditionally 
independent, and conditional on $\cG$, such a block $B_k$ takes 
the values $\Phi_k \beta_1$ and $\Phi_k \beta_2$ with probabilities
$1/2$ each, 
for some $\cG$-measurable automorphisms $\{ \Phi_k \}$ that take 
$o$ to $S(kM)$, respectively.
\end{lemma}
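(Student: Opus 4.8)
The plan is to describe, for each good index $k$, the conditional law of the block $B_k$ given $\cG$, and show it is the uniform distribution on $\{\Phi_k\beta_1,\Phi_k\beta_2\}$ for a suitable $\cG$-measurable automorphism $\Phi_k$. First I would set $\Phi_k := \phi_{S(kM)}\psi_k^{-1}$; note this is $\cG$-measurable (both $S(kM)$ and $\psi_k^{-1}\vert_{G_0}$ are in $\cG$, and the ambiguity in $\psi_k$ only affects it off $G_0$, while $\beta_1,\beta_2$ live in $G_0$), and $\Phi_k$ takes $o$ to $S(kM)$. By definition of a good index, $\psi_k\phi_{S(kM)}^{-1}B_k \in \{\beta_1,\beta_2\}$, i.e. $B_k \in \{\Phi_k\beta_1,\Phi_k\beta_2\}$; so the only thing to establish is that the conditional probability of each alternative is $1/2$, and that distinct good blocks are conditionally independent.

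The key idea is a symmetry argument. I would condition on the full $\sigma$-algebra $\cG'$ generated by $\cG$ together with the \emph{unordered} collection of path-segments $\{B_k : k \text{ good}\}$ up to the action of $\AUT_o$ — more precisely, condition on everything except, for each good $k$, the choice between the two traversals $\beta_1$ and $\beta_2$ of $G_0$ inside $B_k$. Because the increments of simple random walk are i.i.d.\ and the transition probabilities are invariant under $\AUT(G)$, the law of the walk is invariant under the map that, for a fixed good index $k$, replaces the middle segment $B_k$ by the ``other'' traversal: concretely, there is a bijection between walk trajectories realizing $B_k = \Phi_k\beta_1$ and those realizing $B_k = \Phi_k\beta_2$, obtained by splicing in the alternative path between $S(kM)$ and $S((k+1)M)$, and this bijection preserves the probability weight since $\beta_1$ and $\beta_2$ have the same length $2L+J+1$ and both are nearest-neighbour paths in $G$ (each step has the same transition probability $1/\deg_G(\cdot)$). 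Moreover, this swap does not change which indices are good — conditions (b) and (c) in Definition \ref{def:good} only involve $S(-\infty,Mj)$, $B_j$, and $S(M(j+1),\infty)$ at the \emph{endpoints} of $B_j$, and both $\beta_1,\beta_2$ start at $\pi(1)$ and end at $\rho(1)$ and stay inside $G_0$ — nor does it change any of the generators of $\cG$ listed in \eqref{e:conditioning}. Hence the conditional law of $B_k$ given $\cG$ is exchangeable under the swap $\beta_1 \leftrightarrow \beta_2$, which forces each to have conditional probability $1/2$. Applying the swaps at different good indices independently (they act on disjoint blocks of increments), the same argument gives conditional independence of the good blocks given $\cG$.

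The main obstacle I anticipate is the bookkeeping needed to verify that swapping $\beta_1$ for $\beta_2$ inside one good block genuinely leaves the event ``$k'$ is good'' unchanged for every other index $k'$, and leaves all the conditioning data in \eqref{e:conditioning} unchanged. This requires checking: (i) the swap does not alter $S(k'M)$ for any $k'$ (true, since only increments strictly between $kM$ and $(k+1)M$ are modified); (ii) it does not alter the paths $B_{k'}$ for $k' \ne k$ good or not (true for the same reason); (iii) it does not affect the disjointness conditions (b), (c) for any index, which holds because $\beta_1$ and $\beta_2$ have the same image $G_0 \cup \{\pi(1),\rho(1)\}$ as vertex sets — both traverse every edge of $G_0$ and touch $\pi(1)$ and $\rho(1)$ and nothing else — so the trace $S(kM,(k+1)M)$ as a vertex set is unchanged by the swap; and (iv) $\psi_k^{-1}\vert_{G_0}$ and $Y_k$ are determined by data that the swap preserves. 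Once these invariances are in place, the probability-preserving property of the splicing bijection is immediate from the product structure of the walk's law and the equal lengths of $\beta_1,\beta_2$, and the conclusion follows.
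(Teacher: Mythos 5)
Your proposal is correct and rests on exactly the same mechanism as the paper's proof: the decomposition of the walk into the pre-block past, the blocks, and the post-block future, together with the fact that $\beta_1$ and $\beta_2$ have the same length, the same endpoints and the same trace, so that exchanging them inside a good block preserves the walk's law, every generator of $\cG$, and the goodness of all indices. The paper formalizes this symmetry as a factorization $\P[(\cap_{k\in I}A_k^{\eps_k})\cap E]=2^{-|I|}\P[E]$ over a $\pi$-system $\cP_N$ generating $\cG$, while you phrase it as a measure-preserving splicing involution; these are two write-ups of the same argument.
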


\begin{proof}
We know that almost surely there are infinitely many good 
indices. Fix $N \ge 2$.
Consider the class $\cP_N$ of events of the form:
\eqnsplst
{ E
  &= \{ S(-j) = y_j,\, j = 1, \dots, J; \\
  &\qquad S(kM) = z_k,\, k = 0, \dots, K; \\
  &\qquad \text{$k \in I$ are good}; \\
  &\qquad \text{$k' \in \{0, \dots, K-1\} \setminus I$ are not good} \\
  &\qquad \text{$\psi_k \phi^{-1}_{S(kM)} B_k 
     \in \{ \beta_1, \beta_2 \}$ for $k \in I$}; \\
  &\qquad \text{$\psi_k^{-1}\vert_{G_0} = \alpha$}; \\
  &\qquad \text{$B_{k'} = b_{k'}$ for 
    $k' \in \{0, \dots, K-1\} \setminus I$} \}, }
where $J$, $K \ge N$, $z_k$, $I \subset \{0, \dots, K-1\}$, $|I| = N$, 
$\alpha$, and $b_{k'}$ are fixed. Let $A_k^{\eps_k}$ be the event
$\{ \psi_k \phi_{z_k}^{-1} B_k = \beta_{\eps_k} \}$, where
$\eps_k \in \{1,2\}$ for $k = 0, \dots, K-1$. 
By decomposing the path of $S$ into 
$\{ S(-n) \}_{n \ge 0}$, the blocks $B_k$, $k = 0, \dots, K-1$, and
$\{ S(n) \}_{n \ge KM}$, we see that 
\eqnst
{ \P \left[ \left( \cap_{k \in I} A^{\eps_k}_k \right) \cap E \right]
  = \left( \prod_{k \in I} \frac{1}{2} \right) \P [ E ]. }
Since $\cP_N$ is closed under intersection, and generates 
$\cG$, this implies conditional independence of the first 
$N$ good blocks. The Lemma follows.
\end{proof}

\begin{proof}[Proof of Proposition \ref{prop:fluct}]
Let $\eps > 0$ be given.
Due to Lemma \ref{lem:highDcoupling}, there exists
a finite $B \subset V$ such that for all 
large enough $n$, with probability at least $1 - \eps$, 
we have $v^{(i)}_{n,x} \in B$ for $i = 1, \dots, K_{n,x}$.
Hence the Proposition will follow, once we show that 
for any $v, w \in B$ we have
\eqn{e:not-close}
{ \lim_{M \to \infty} \limsup_{n \to \infty}
    \P \left[ \pi_{n,v} \cap \pi_{n,w} = \es,\, 
    \left | d_{t_n}(v,s) - d_{t_n}(w,s) \right| 
    \le M \right]
  = 0. }
Let $S^{1}$ and $S^{2}$ be independent simple random 
walks starting at $v$ and $w$, respectively.
Let $\gamma^{i} := \LE(S^{i}[0,\tau^{i}_n])$.
We apply Lemma \ref{lem:condindep} to the random walk 
$S = S^{1}$. Let 
\eqnst
{ T_n
  := \sup \{ j \ge 0 : \text{$B_j \subset V_n$ and $j$ is good} \} }
be the index of the last good block completed before $\tau^{1}_{n}$.
Let the set of good indices be: $\{ g_1, g_2, \dots \}$
We define the stretches between good blocks: we let
\eqnst
{ \rho_0 
  := [S^{1}(0), S^{1}(1), \dots, S^{1}(g_1 M)], } 
and for $k \ge 1$ we let 
\eqnst
{ \rho_k 
  := [S^{1}((g_k+1)M), S^{1}((g_k+1)M +1), \dots, S^{1}(g_{k+1} M)]. }
Observe that loop-erasure of the $\rho_k$'s do not interfere 
with each other, due to item (c) of Definition \ref{def:good}. 
Hence $\gamma^{1} = \LE ( S^{1}[0, \tau^1_{n}] )$ is the concatenation of:
\eqnsplst
{ &\LE(\rho_0), \LE(B_{g_1}), \LE(\rho_1), \LE(B_{g_2}), \dots, \\
  &\LE(\rho_{T_n-1}), \LE(B_{T_n}), \LE(S^{1}[(T_n+1)M,\tau^{1}_{n})). }
Due to Lemma \ref{lem:goodblocks} (ii), for any $K$ we have 
\eqnst
{ \lim_{n \to \infty} \P [ T_n \ge K ]
  = 1. }
Now condition on the random walk $S^{2}$, condition 
on the set of good indices and the bad blocks of $S^{1}$ 
up to exit from $V_n$, and condition on the event 
$\{ T_n \ge K \}$. Then
\eqnst
{ \length(\gamma^{1}) - \length(\gamma^{2})
  = (Y_0 - \length(\gamma^{2})) 
    + \sum_{j=1}^{K} Y_j, }
where the value of the first term is fixed by the conditioning, 
and the $Y_j$ are conditionally i.i.d.~with positive variance.
By the local central limit theorem \cite{Spitzer}, we get
\eqnst
{ \P \left[ \left| \length(\gamma^{1}) - 
     \length(\gamma^{2}) \right| \le M \right]
  \le \frac{c M}{\sqrt{K}}. }
Letting $K \to \infty$ implies the claim in \eqref{e:not-close}, 
and hence the Proposition follows.
\end{proof}

\subsection{Asymptotic uniformity of the permutation}
\label{ssec:unif}

In this section we prove Proposition \ref{prop:unif},
and complete the proof of Theorem \ref{thm:trans}.

Let $k \ge 1$ be fixed and let $(F^{(i)},v^{(i)})$,
$1 \le i \le k$ be a fixed sequence of finite
rooted trees in $G$. We will use Wilson's method
to generate samples $\bt$ and $t_n$ from 
the measures $\mu$ and $\mu_n$. The set-up is the same
as in Section \ref{ssec:fluct}, that is, we use the
same random walks $S^j$ started at the vertices
\eqnst
{ u_1 = v^{(1)}, \dots, u_k = v^{(k)}, u_{k+1}, \dots, u_L, }
where $u_{k+1}, \dots, u_L$ is an enumeration of
$\cup_{x \in A} \cN_x$. Recall that $T^i$ and $T^i_n$ denote 
the hitting times of $\cF_{i-1}$ and $\cF_{n,i-1}$,
respectively, by $S^i$.

Let $B_1$ denote the set of vertices in $\cup_{i=1}^k F^{(i)}$.
Let 
\eqnst
{ C 
  := \left( \cap_{i=1}^k \{ T^i = \infty \} \right)
     \cap \{ \cF_k \cap B_1 = \{ v^{(1)}, \dots, v^{(k)} \} \}. }
Let 
\eqnst
{ C'
  := \left\{ \cup_{j=k+1}^L \LE(S^j[0,T^j]) 
     = \cup_{i=1}^k F^{(i)} \right\}. }
Observe that as long as $(F^{(i)},v^{(i)})_{i=1}^k$ is a possible
sequence for $(F^{(i)}_A,v^{(i)}_A)_{i=1}^k$, we have
\eqn{e:CC'}
{ C \cap C' 
  = \left\{ K_A = k,\, 
    (F^{(i)}_A, v^{(i)}_A) = (F^{(i)}, v^{(i)}),\, 
    1 \le i \le k \right\}. }
We also introduce
\eqnsplst
{ C_n
  &:= \left( \cap_{i=1}^k \{ T^i_n = \tau^i_n \} \right)
     \cap \{ \cF_{n,k} \cap B_1 = \{ v^{(1)}, \dots, v^{(k)} \} \} \\ 
  C'_n
  &:= \left\{ \cup_{j=k+1}^L \LE(S^j[0,T^j_n]) 
     = \cup_{i=1}^k F^{(i)} \right\}, }
and observe that 
\eqn{e:CnC'n}
{ C_n \cap C'_n 
  = \left\{ K_{n,A} = k,\, 
    (F^{(i)}_{n,A}, v^{(i)}_{n,A}) = (F^{(i)}, v^{(i)}),\, 
    1 \le i \le k \right\}. }

Here is the outline of the proof. The restriction involving 
$B_1$ has little effect on the walks $S^i$, $i = 1, \dots, k$,
once they are far away from $B_1$, and likewise, the 
condition $\{ T^i = \infty \}$, $i = 1, \dots, k$. Therefore,
for some large $n'$, once these walks leave $V_{n'}$, they 
can be treated as independent. The point where Assumption 
\ref{ass:Gcond}(iv) (bounded harmonic functions are constant)
becomes crucial, is to show that the walks can also be treated 
as having the same distribution. Namely, we show that 
Assumption \ref{ass:Gcond}(iv) implies that for some $n'' > n'$,
the exit measures of the walks on $\partial V_{n''}$ are nearly 
identical in total variation distance. Therefore, their 
continuations are nearly i.i.d. This will imply the near 
uniformity of $\sigma_{n,A}$ for $n \gg n''$.

Let $\eps > 0$ be given.
As in the proof of Lemma \ref{lem:highDcoupling}, we deduce that 
\eqn{e:not-intersect}
{ \lim_{n \to \infty} 
     \P \left[ \left( \cap_{i=1}^k \{ T^i = \infty \} \right)
     \bigtriangleup \left( \cap_{i=1}^k \{ T^i_n = \tau^i_n \} \right) 
     \right]
  = 0, }
where $\bigtriangleup$ denotes symmetric difference.
Letting $\hat{\tau}^i_{B_1}$ denote the last visit by $S^i$ 
to the set $B_1$, transience implies that for each $i = 1, \dots, k$
we have
\eqn{e:not-come-back}
{ \lim_{n \to \infty} \P \left[ S^i[\tau^i_n,\infty) \cap 
      S^i[0,\hat{\tau}^i_{B_1}] \not= \es \right]
  = 0. }
It follows from \eqref{e:not-intersect} and \eqref{e:not-come-back}
that there exists $n_1$ such that for all $n \ge n_1$ we have
\eqn{e:small-diff1}
{ \P \left[ C \bigtriangleup C_n \right]
  < \eps. }
Since on the event $C \cap C'$ we have $T^j < \infty$ 
for $j = k+1, \dots, L$, we can find a large enough finite 
set $B_2 \subset V$ ($B_2 \supset B_1$) such that 
with $G_1 := \cap_{j=k+1}^L \{ S^j[0,T^j] \subset B_2 \}$
we have
\eqn{e:not-go-out}
{ \P \left[ C \cap C' \cap G_1^c \right]
  < \eps. }
Let 
\eqnst
{ G_{n,2}
  := \cap_{i=1}^k \left\{ S^i[\tau^i_n,\infty) \cap 
     S^i[0,\hat{\tau}^i_{B_2}] = \es \right\}. }
Applying \eqref{e:not-come-back} for $B_2$ in place of $B_1$,
we get that there exists $n_2$ such that for all $n \ge n_2$
we have
\eqn{e:not-come-back2}
{ \P \left[ G_{n,2} \right] 
  > 1 - \eps. }
It follows from \eqref{e:small-diff1}, \eqref{e:not-go-out} and
\eqref{e:not-come-back2} 
that there exists $n_3$ such that for all $n \ge n_3$ we have
\eqn{e:small-diff2}
{ \P [ (C \cap C') \bigtriangleup (C_n \cap C'_n) ] 
  < 3 \eps. }
Let $S$ denote a simple random walk independent of the $S^j$'s.
L\'evy's $0$--$1$ law implies that for each $i = 2, \dots, k$,
almost surely we have
\eqnst
{ \lim_{n \to \infty} \P \Bigg[ S[0,\infty) \cap 
     \Bigg( \bigcup_{\substack{1 \le j \le k\\ j \not= i}} 
     S^j[0,\infty) \Bigg) = \es \,\Bigg|\,
     S(0) = S^i(\tau^i_n),\, S^j,\, j = 1, \dots, k,\, j \not= i \Bigg] 
  = 1. }
Hence we can find $n_4$ such that with
\eqnst
{ G_{n,3}
  := \bigcap_{i=1}^k \Bigg\{ S^i[\tau^i_n,\infty) 
     \cap \Bigg( \bigcup_{\substack{1 \le j \le k\\ j \not= i}} 
     S^j[0,\infty) \Bigg) = \es \Bigg\} }
for all $n \ge n_4$ we have
\eqn{e:not-intersect2}
{ \P \left[ G_{n,3} \right] 
  > 1 - \eps. }

Let $n' := \max \{ n_1, n_2, n_3, n_4 \}$.
Given $D \subset V$, $B \subset \partial D$ and 
$z \in \bar{D} := D \cup \partial D$, let
\eqnst
{ h_D(z,B)
  := \P [ S(\tau_D) \in B \,|\, S(0) = z ] }
denote the exit measure of simple random walk on the boundary of $D$.
Note that $h_D(\cdot,B)$ is harmonic in $D$ for any 
$B \subset \partial D$, and $0 \le h_D(z,B) \le 1$.
Here we write $\partial D = \{ y \in V \setminus D : 
\text{$y \sim x$ for some $x \in D$} \}$.
We show that we can find an index $n'' > n'$ such that 
\eqn{e:couple}
{ \sup_{z_1, z_2 \in \partial V_{n'}} 
     \| h_{V_{n''}}(z_1, \cdot) - h_{V_{n''}}(z_2, \cdot) \|
  \le \eps, }
where $\| \cdot \|$ denotes total variation distance.
Indeed, if this was not the case we could find 
$z_1, z_2 \in \partial V_{n'}$, and a sequence
$r_1 < r_2 < \dots$ and subsets $A_i \subset \partial V_{r_i}$
such that 
\eqn{e:delta}
{ \left| h_{V(r_i)}(z_1,A_i) - h_{V(r_i)}(z_2,A_i) \right| 
  \ge \eps, \quad i = 1, 2, \dots. }
By passing to a subsequence, we may assume that the limit
\eqnst
{ h(z) 
  := \lim_{i \to \infty} h_{V(r_i)}(z,A_i) } 
exists for all $z \in V$. From \eqref{e:delta} we have
$|h(z_1) - h(z_2)| \ge \eps$. However, $h$ is a bounded harmonic 
function, so it must be constant by Assumption \ref{ass:Gcond}(iv).
This contradiction proves \eqref{e:couple}.

Consider now $n > n''$, and let 
\eqnst
{ f(y)
  := \P [ S(\tau_{n''}) = y \,|\, S(0) = o ], \quad 
     y \in \partial V_{n''}. }
It follows from \eqref{e:couple} that 
\eqn{e:close}
{ \| f(\cdot) - h_{V_{n''}}(x_i,\cdot) \| 
  \le \eps, \quad i = 1, \dots, k. }
Hence, for any $x_1, \dots, x_k \in \partial V_{n'}$ there exists
a coupling $g_{x_1,\dots,x_k}(z_1,\dots,z_k,y_1,\dots,y_k)$ with 
marginals 
\eqnsplst
{ \sum_{y_1,\dots,y_k} g_{x_1,\dots,x_k}(z_1,\dots,z_k,y_1,\dots,y_k)
  &= \prod_{i=1}^k h_{V_{n''}}(x_i, z_i) \\
  \sum_{z_1,\dots,z_k} g_{x_1,\dots,x_k}(z_1,\dots,z_k,y_1,\dots,y_k)
  &= \prod_{i=1}^k f(y_i), }
where 
\eqnst
{ \sum_{z_1 = y_1, \dots, z_k = y_k} 
      g_{x_1,\dots,x_k} (z_1,\dots,z_k,y_1,\dots,y_k)
  \ge 1 - O(\eps). }
Let $\{ \tilde{S}^i(n) \}_{n \ge 0}$, $i = 1, \dots, k$
be independent simple random walks with 
initial distribution $f$. We couple the initial 
distribution of the $\tilde{S}^i$'s to the distribution 
of $S^i(\tau^i_{n''})$'s using $g$, where $x_i = S^i(\tau^i_{n'})$.
In this coupling, we have $S^i(\tau^i_{n''}+m) = \tilde{S}^i(m)$,
$m \ge 0$ with probability at least $1 - O(\eps)$. 

We define the random permutation
$\tilde{\sigma} \in \Sigma_k$ by the condition
\eqn{e:def-perm}
{ \length( \LE(\tilde{S}^{\tilde{\sigma}(1)}
    [0,\tilde{\tau}^{\tilde{\sigma}(1)}_n]) )
  < \dots
  < \length( \LE(\tilde{S}^{\tilde{\sigma}(k)}
    [0,\tilde{\tau}^{\tilde{\sigma}(k)}_n]) ). }
Here, if there are ties, we break them in a uniformly
random way. That is, if 
$\{ i_1, \dots, i_r \} \subset \{ 1, \dots, k \}$
is a maximal set of indices such that the loop-erasures
of the paths $\tilde{S}^{i_j}[0,\tilde{\tau}^{i_j}_n]$,
$j = 1, \dots, r$ have equal lengths, we pick an ordering 
on them uniformly at random, and use that ordering 
in \eqref{e:def-perm}. This way of breaking ties ensures 
that $\tilde{\sigma}$ is exactly uniformly distributed on
$\Sigma_k$.

It follows from Lemma \ref{lem:loopfree}, and the almost 
sure finiteness of $\tau^i_{n''}$, that there 
exist $n_5 > n''$ and an $M_1 < \infty$ such that with 
\eqnst
{ G_{n,4}
  := \cap_{i=1}^k 
     \{ \text{there exists a loop-free point for $S^i$
        in $[\tau^i_{n''},\tau^i_{n''}+M_1]$} \} }
for all $n \ge n_5$ we have
\eqn{e:loopfree-exists}
{ \P [ G_{n,4} ]
  > 1 - \eps. }
Occurrence of the event $G_{n,4}$ ensures that when $n \gg n''$, 
most of the length of $\LE(S^i[0,\tau^i_n])$ comes from
the length of $\LE(\tilde{S}^i[0,\tilde{\tau}^i_n])$, for
$i = 1, \dots, k$. In particular, there exists a 
deterministic $M_2 = M_2(M_1,n'')$, such that 
whenever $C_n \cap G_{n,4} \cap \{ |d^{(i,j)}_{n,A}| > M_2 \}$
occurs, we have $\sigma_{n,A} = \tilde{\sigma}$.

We are ready to start analyzing the event on the left hand
side of \eqref{e:uniperm}.
A straightforward computation shows that for any events
$A \in \sigma( S^i[0,\tau^i_{n'}], i = 1, \dots, k )$
and $B \in \sigma( \tilde{S}^i[0,\infty), i = 1, \dots, k )$
we have 
\eqnspl{e:near-indep}
{ &| \P [ A \cap B ] - \P [ A ] \P [ B ] | \\
  &\qquad \le \sup_{x_1,\dots,x_k} \sum_{\substack{z_1,\dots,z_k\\y_1,\dots,y_k}}
      \left| g_{x_1,\dots,x_k} (z_1,\dots,z_k,y_1,\dots,y_k) 
      - \prod_{i=1}^k \delta(z_i,y_i) f(y_i) \right| \\
  &\qquad \le O(\eps). } 
We apply this with $A = C_{n'}$ and $B = \{ \tilde{\sigma} = s \}$,
where $s \in \Sigma_k$ is fixed.
To be precise, due to the breaking of ties for $\tilde{\sigma}$, 
this $B$ is defined on a slightly larger $\sigma$-field than 
in \eqref{e:near-indep}. But this has no consequence. 
Using \eqref{e:near-indep} and 
\eqref{e:small-diff1}, for $n > n''$ we obtain
\eqn{e:approx-indep}
{ \P [ C_{n'} \cap \{ \tilde{\sigma} = s \} ]
  = \frac{1}{k!} \P [ C_{n'} ] + O(\eps)
  = \frac{1}{k!} \P [ C ] + O(\eps). }
Our goal now is to show that on a slightly different event
$\tilde{\sigma}$ can be replaced by $\sigma_{n,A}$.

Due to Proposition \ref{prop:fluct}, we can find 
$n_6 > n''$ such that for all $n \ge n_6$ we have
\eqn{e:largediff}
{ \P \left[ \min_{1 \le i < j \le k} d^{(i,j)}_{n,A} > M_2 \right]
  \ge 1 - \eps. }
Consider for $n \ge \max \{ n_5, n_6 \}$ the event 
\eqn{e:all-occur}
{ \tilde{C}_n
  := C_{n'} \cap G_{n',2} \cap G_{n',3} \cap G_{n,4} \cap 
     \left\{ \min_{1 \le i < j \le k} d^{(i,j)}_{n,A} > M_2 \right\}. }
Observe that $\tilde{C}_n \subset C_n$ and that 
on $\tilde{C}_n$, we have $\sigma_{n,A} = \tilde{\sigma}$.
Due to the estimates \eqref{e:small-diff1},
\eqref{e:not-come-back2}, \eqref{e:not-intersect2}
and \eqref{e:largediff}, for $n \ge \max \{ n_5, n_6 \}$ we have
\eqn{e:modified}
{ \P [ \tilde{C}_n \cap \{ \sigma_{n,A} = s \} ]
  = \P [ C_{n'} \cap \{ \tilde{\sigma} = s \} ] + O(\eps). }
The presence of the event $G_{n,2}$ in \eqref{e:all-occur} ensures 
that on the event $\tilde{C}_n \cap \{ \sigma_{n,A} = s \}$, we have 
$\cF_k \cap B_2 = \cF_{n,k} \cap B_2$.  Therefore,
\eqn{e:conditional}
{ \P \big[ C'_n \cap G_1 \,\big|\, \tilde{C}_n 
     \cap \{ \sigma_{n,A} = s \} \big]
  = \P [ C' \cap G_1 \,|\, C ]
  = \P [ C' \,|\,C ] + O(\eps). }
It follows from \eqref{e:approx-indep}, \eqref{e:modified} and
\eqref{e:conditional} that 
\eqn{e:almost}
{ \P \left[ \tilde{C}_n \cap \{ \sigma_{n,A} = s \} 
    \cap C'_n \cap G_1 \right]
  = \frac{1}{k!} \P [ C \cap C' ] + O(\eps). }
Since $\P [ \tilde{C}_n \bigtriangleup C_n ] = O(\eps)$ and
$\P [ G_1^c \cap C \cap C' ] < \eps$, \eqref{e:almost}
implies that 
\eqnst
{ \P \left[ C_n \cap C'_n \cap \{ \sigma_{n,A} = s \} \right]
  = \frac{1}{k!} \P [ C \cap C' ] + O(\eps). }
Comparing with \eqref{e:CC'} and \eqref{e:CnC'n},
this completes the proof of the Proposition.

\begin{proof}[Proof of Theorem \ref{thm:trans}]
Due to Lemma \ref{lem:highDcoupling}, for any $\eps > 0$ 
there exists a finite $B \subset V$ such that 
\eqnst
{ \liminf_{n \to \infty} \P \left[ 
     \cup_{i=1}^{K_{n,A}} (F^{(i)}_x,v^{(i)}_x) 
     \subset B \right]
  \ge 1 - \eps. }
Hence we can restrict our attention to the finite collection 
of rooted trees $(F,v)$ that lie inside $B$. Let 
$(F^{(1)}, v^{(1)}), \dots, (F^{(k)},v^{(k)})$ be a possible
value of $(F^{(1)}_A,v^{(1)}_A), \dots (F^{(k)}_A,v^{(k)}_A)$,
with $K_A = k$, such that all these trees lie inside $B$.

Lemma \ref{lem:highDcoupling} shows that in a suitable
coupling, the events 
$(F^{(i)}_{n,A},v^{(i)}_{n,A})_{i=1}^k = (F^{(i)},v^{(i)})_{i=1}^k$
and $(F^{(i)}_{A},v^{(i)}_{A})_{i=1}^k = (F^{(i)},v^{(i)})_{i=1}^k$
are asymptotically equal, when this occurs, we have
$(F^{(i)}_{n,x},v^{(i)}_{n,x})_{i=1}^{K_{n,x}} = (F^{(i)}_x,v^{(i)}_x)_{i=1}^{K_x}$
for all $x \in A$.
Proposition \ref{prop:fluct} implies that for all 
$x \in A$ and for large enough $n$ the condition 
\eqref{e:diam-large} of Lemma \ref{lem:only-perm}
holds with high probability. This implies that 
with high probability, the permutations 
$\{ \sigma_{n,x} \}_{x \in A}$ are determined by 
$\sigma_{n,A}$. Moreover, the dependence
of the collection $\{ \sigma_{n,x} \}_{x \in A}$ on 
$\sigma_{n,A}$ is given by the same (deterministic) 
function as the dependence of $\{ \sigma_x \}_{x \in A}$ 
on $\sigma_A$.
Proposition \ref{prop:unif} implies that conditioned on 
$(F^{(i)}_A,v^{(i)}_A) = (F^{(i)},v^{(i)})$, $i = 1, \dots, k$,
the distribution of $\sigma_{n,A}$ is close to uniform.
This implies that for each $x \in A$, the joint distribution of 
$\{ \sigma_{n,x} \}_{x \in A}$ is close to the joint distribution 
of $\{ \sigma_x \}_{x \in A}$.

The above considerations, Lemma \ref{lem:only-perm}, and the 
definition of $\eta$ in \eqref{e:etax} imply that the joint 
distribution of $\{ \eta_{n,x} \}_{x \in A}$ converges to the 
joint distribution of $\{ \eta_x \}_{x \in A}$ as $n \to \infty$.
Hence the Theorem follows.
\end{proof}

\section{Infinite volume limits on regular trees}
\label{sec:limitsIII}

In this section we consider infinite $d$-regular trees.
The paper \cite{MRS02} proved the existence of the
limit $\nu_n \Rightarrow \nu$ along sufficiently
regular exhaustions (see condition (24) there).
It was also claimed that the limit exists along any
exhaustion, however this does not follow from the
arguments in \cite{MRS02} (note that statement (25) 
of \cite{MRS02} does not imply the Cauchy net property 
claimed there). In this section we prove the general
convergence result. Note that the proof of 
Theorem \ref{thm:trans} does not apply to the infinite 
$d$-regular tree, for more than one reason: 
Assumption \ref{ass:Gcond}(iv) is not satisfied, 
and there is no fluctuation in the lengths of paths,
so Proposition \ref{prop:fluct} fails.
Nevertheless, the Majumdar-Dhar bijection can still be used 
to show that $\nu_n \Rightarrow \nu$ along any exhaustion. 

Let $G = (T^d,E)$ be the infinite $d$-regular tree, with
$d \ge 3$. We will denote by $o$ an arbitrary fixed vertex 
of $G$.

\begin{theorem}
\label{thm:tree}
For any $d \ge 3$ and any exhasution 
$V_1 \subset V_2 \subset \dots \subset T^d$, we have
$\nu_n \Rightarrow \nu$ for a unique measure $\nu$,
independent of the exhaustion.
\end{theorem}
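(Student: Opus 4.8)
The plan is to push the Majumdar--Dhar bijection to the limit, reducing the statement to the asymptotics of a few \emph{local} functionals of the uniform spanning tree $t_n$ of $G_n$, and then to exploit the geometry of $T^d$. By Corollary~\ref{cor:uniform}, $\nu_n=(\phi_{G_n})_*\mu_{G_n}$, so it suffices to control, for a fixed finite $A\subset T^d$, the joint limiting law of $(\eta_{n,x})_{x\in A}$ with $\eta_n=\phi_{G_n}(t_n)$. By Lemma~\ref{lem:depend}(iii), this law is a fixed function of the family, over $x\in A$, of the triple $n_{n,x}:=|\{y\sim x:d_{t_n}(s,y)<d_{t_n}(s,x)\}|$, the set $P_{n,x}:=\{y\sim x:d_{t_n}(s,y)=d_{t_n}(s,x)-1\}$, and $e_{n,x}$, the edge of $t_n$ at $x$ pointing toward $s$; moreover $\eta_{n,x}=\alpha_{P_{n,x},K_{n,x}}(e_{n,x})$ with $K_{n,x}=\{\deg(x)-n_{n,x},\dots,\deg(x)-n_{n,x}+|P_{n,x}|-1\}$ and $e_{n,x}\in P_{n,x}$. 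So the whole question reduces to the joint limiting behaviour of $\big((n_{n,x},P_{n,x},e_{n,x})\big)_{x\in A}$.

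I would generate $t_n$ by Wilson's algorithm (Theorem~\ref{thm:Wilson}) rooted at $s$, processing the vertices of $\bigcup_{x\in A}\cN_x$ first, with random walks coupled across different $n$ --- and across two exhaustions one wishes to compare --- for as long as they stay inside a large ball about $A$. Two structural facts about the tree help. First, for $x\sim y$ either $\{x,y\}\in t_n$, and then $|d_{t_n}(s,x)-d_{t_n}(s,y)|=1$, or the $t_n$-path from $x$ to $y$ passes through $s$; hence each neighbour of an $x\in A$ is either the parent, a child, or ``remote'', and which neighbour is the parent and which are children is decided by the walks' behaviour \emph{near} $A$ (essentially, whether the walk from $y$ returns to $x$ before leaving the large ball). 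This part of the picture therefore stabilises as $n\to\infty$ to limits expressed through escape probabilities of simple random walk on the infinite tree, and these are the same for every exhaustion because the boundary recedes. Second --- and here the real work lies --- for a remote neighbour $y$ one still has to decide whether $d_{t_n}(s,y)$ is $<$, $=$, or $>$ $d_{t_n}(s,x)$, and this compares the lengths of two loop-erased paths to $\partial V_n$ lying in \emph{disjoint} subtrees of $T^d$; these lengths do depend genuinely on the exhaustion. I would treat this by conditioning on all the walk paths up to the moment they have separated into distinct branches and then analysing the remaining ``depths to the sink''. The decisive input is the symmetry of the bijection on $T^d$: whenever several branches issuing from $x$ end up at equal $t_n$-distance to $s$ --- i.e.\ their roots lie in $P_{n,x}$ --- there is an automorphism of $T^d$ fixing $x$ (and $A$) that permutes them while preserving the rest of the conditioning, so that, conditionally on the ``type'' (the rôle of every neighbour of every $x\in A$, together with the sets $P_{n,x}$ and the numbers $n_{n,x}$), the parent edge $e_{n,x}$ is distributed uniformly over $P_{n,x}$ whenever the type does not already pin it down (as it does when $x$'s parent happens to be another vertex of $A$).

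Because $\alpha_{P_{n,x},K_{n,x}}$ is a bijection onto $K_{n,x}$, this conditional uniformity of $e_{n,x}$ forces $\eta_{n,x}$ to be conditionally uniform over $K_{n,x}$ given the type, \emph{regardless of the arbitrary choice of the maps $\alpha$}; so the joint limiting law of $(\eta_{n,x})_{x\in A}$ is obtained from the limiting law of the types by assigning to each $x$ (conditionally on the type, independently) a uniform element of $K_{n,x}$. The last step is to check that the resulting composite measure is the \emph{same} for every exhaustion. This is the delicate point, because the laws of $n_{n,x}$ and of $|P_{n,x}|$ separately are \emph{not} exhaustion-independent (already for $A=\{o\}$ one sees the law of $n_{n,o}$ change with the exhaustion, while that of $\eta_{n,o}$ does not): one needs a conservation-type identity showing that the exhaustion-dependence cancels once $|P_x|^{-1}\mathbf 1[\,v\in K_x\,]$ is averaged against the type law --- equivalently, an exhaustion-independent description of the limit $\nu$, which then also gives uniqueness. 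I expect this cancellation, and the symmetry statement underpinning it, to be the main obstacle; the remaining ingredients --- convergence of the neighbour rôles via escape probabilities, and the receding-boundary estimates needed for the remote comparisons --- are routine adaptations of the Wilson-method arguments already used in Sections~\ref{sec:limitsI}--\ref{sec:limitsII}.
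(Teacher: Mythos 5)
Your reduction via Corollary~\ref{cor:uniform} and Lemma~\ref{lem:depend}(iii), and your identification of where the exhaustion-dependence sits (the comparison of depths to the sink along disjoint branches), match the geometry the paper exploits. But the proposal has a genuine gap, and you name it yourself: the ``conservation-type identity showing that the exhaustion-dependence cancels'' is exactly the content of the theorem, and you leave it as an expectation rather than a proof. The paper supplies precisely this missing piece, in a different form from the one you sketch. It classifies spanning trees by the set $C\subsetneqq\partial A$ of boundary vertices whose arrow points into $A$, and proves two symmetry lemmas: Lemma~\ref{lem:sym1} shows, by running the Burning Algorithm on $G_n$ in parallel with burning on an auxiliary graph $G_{A,C}$ (paths of the appropriate lengths glued to $\partial A$), that given $C$ and the paths $\pi_{n,y}$, $y\in(\partial A)\setminus C$, the event $\{\eta_{n,x}=h_x,\ x\in A\}$ is realized by \emph{exactly one} arrow configuration in $A$ or by none, \emph{independently of the lengths} $m_{n,y}$; Lemma~\ref{lem:sym2} then shows, via Wilson's method on a weighted auxiliary network $G'_{A,C}$, that the probability of the event together with prescribed paths factorizes as $f_{A,C}(q_{n,y'}:y'\in\partial A)\prod_{y}q_{n,y}(\sigma_y)$ with $f_{A,C}$ not depending on the paths. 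Summing over the paths leaves an expression in the escape probabilities $q_{n,y}$ alone, and $q_{n,y}\to(d-2)/(d-1)$ for every exhaustion, which gives both convergence and uniqueness. Nothing in your outline substitutes for these two lemmas.

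A secondary problem is your proposed mechanism for the cancellation: conditional uniformity of the parent edge $e_{n,x}$ over $P_{n,x}$ obtained from an automorphism of $T^d$ fixing $x$ and $A$ that permutes branches. Such an automorphism does not in general preserve the finite set $V_n$ (the exhaustion is arbitrary, and this is the whole point of the theorem), so it does not preserve the law of $t_n$ nor ``the rest of the conditioning''; for a fixed $n$ and an asymmetric $V_n$ the probabilities of the two branch assignments are genuinely different, and any uniformity can only emerge in the limit --- which again requires an argument of the strength of Lemmas~\ref{lem:sym1}--\ref{lem:sym2} rather than a symmetry of $T^d$. (Note also that the paper's route never needs conditional uniformity of $\eta_{n,x}$ over $K_{n,x}$; the limit it produces is expressed through the functions $f_{A,C}$ evaluated at $(d-2)/(d-1)$, not through a uniform assignment.) So while your framing of the problem is sound, the decisive steps are missing and the symmetry argument you lean on is not valid as stated.
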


We begin with some preparations for the proof.
Fix a finite $A \subset T^d$. We need to consider the convergence of 
$\nu_{G_n} [ \eta_{n,x} = h_x,\, x \in A ]$, as $n \to \infty$, 
where $h \in \{0,1,\dots,d-1\}^A$ is fixed.

Let $\{ S(n) \}_{n \ge 0}$ denote a simple random walk 
in $T^d$. Let $\tau_n := \inf \{ k \ge 0 : S(k) \not\in V_n \}$,
and for $B \subset T^d$, let 
$\xi_B := \inf \{ k \ge 0 : S(k) \in B \}$.
The following notation will be useful: given 
$x \in \partial A$ and $V_n \supset A$, let 
\eqnsplst
{ q_{n,x} 
  := \P [ \tau_n < \xi_A \,|\, S(0) = x ]. }
Given a self-avoiding path $\sigma$ from $x$ to $V_n^c$
that does not visit $A$, we also define:
\eqnspl{e:path-event}
{ q_{n,x}(\sigma) 
  := \P [ \text{$\tau_n < \xi_A$ and $\LE(S[0,\tau_n]) = \sigma$} \,|\, 
     S(0) = x ]. }

Recall that $\cT_{G_n}$ is the set of spanning trees of $G_n$.
We will orient edges towards the sink, and view trees as
arrow configurations. For $t_n \in \cT_{G_n}$ let
\eqnst
{ \cC(t_n) 
  := \{ y \in \partial A : \text{$\exists$ $e \in t_n$ 
     such that $e_- = y$ and $e_+ \in A$} \}. }
Note that we always have $\cC(t_n) \subsetneqq \partial A$.
We classify trees according to the value of $\cC$. 
Fix $C \subsetneqq \partial A$, and consider trees $t_n$ with 
$\cC(t_n) = C$. In any such tree, the path from a vertex 
$y \in (\partial A) \setminus C$ to $s$, that is the path 
$\pi_{n,y}(t_n)$, does not visit $A$. 
Due to Lemma \ref{lem:depend}, the occurrence or not of the event 
$\{ \eta_{n,x} = h_x,\, x \in A \}$ depends on: the lengths 
of $\{ \pi_{n,y}(t_n) \}_{y \in (\partial A) \setminus C}$ 
and the position of arrows with tails in $A$.
We will refer to the latter simply as ``the arrows in $A$''.
We denote by $m_{n,y}$ the length of $\pi_{n,y}$.
The key to convergence is a remarkable symmetry property 
of the bijection stated in the next two lemmas.

\begin{lemma}
\label{lem:sym1}
For any $C \subset \partial A$ and $\{ h_x \}_{x \in A}$, 
the following alternative holds. Either \\
(A) for any collection $m_{n,y} \ge 1$, $y \in (\partial A) \setminus C$, 
the event $\{ \eta_{n,x} = h_x,\, x \in A \}$ does not occur for 
any choice of arrows in $A$; or \\
(B) for any collection $m_{n,y} \ge 1$, $y \in (\partial A) \setminus C$,
the event $\{ \eta_{n,x} = h_x,\, x \in A \}$ occurs for exactly one 
choice of arrows in $A$.
\end{lemma}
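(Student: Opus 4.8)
The plan is to prove Lemma~\ref{lem:sym1} by analyzing the Burning Algorithm run ``from the outside in'' on $A$ and its neighbourhood, exploiting the tree structure of $G = T^d$. The central observation is that in a tree, the path $\pi_{n,y}$ from $y \in \partial A$ to the sink, once it leaves $A$, can never return to $A$ (there are no cycles); so the only interaction between ``the arrows in $A$'' (i.e.\ the edges of $t_n$ with tail in $A$, which determine $\{\eta_{n,x}\}_{x \in A}$) and the rest of the tree is through the single incidence data recorded by $C = \cC(t_n)$ and the lengths $m_{n,y}$, $y \in (\partial A)\setminus C$.

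First I would make precise, using Lemma~\ref{lem:depend}(iii), exactly which features of $t_n$ the value $\eta_{n,x}$ depends on for $x \in A$: the cardinality of $\{y \sim x : d_{t_n}(s,x) - d_{t_n}(s,y) \ge 1\}$, the set of such $y$ with difference exactly $1$, and the edge $e_x$. I would then observe that for $x \in A$, the quantity $d_{t_n}(s,x)$ can be computed from: (a) the ``entry points'' of $A$, namely the vertices in $C$ and their distances to $s$ (which are $m_{n,y} + 1$ for $y$ adjacent through an edge of $t_n$ into $A$, but more carefully, determined by the $m_{n,y}$'s), together with (b) the arrows inside $A$. Crucially, because $A$ is a finite subtree-neighbourhood inside the tree $T^d$, the portion of $t_n$ induced on $\bar A = A \cup \partial A$ is a \emph{spanning forest} of the induced graph, rooted at the vertices of $C$, and conversely any such rooted spanning forest, glued to fixed external paths of lengths $\{m_{n,y}\}$, extends to a consistent $t_n \in \cT_{G_n}$. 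So the ``choice of arrows in $A$'' ranges over exactly the set of spanning forests of the graph induced on $\bar A$ in which every component contains exactly one vertex of $C$ and all edges point towards $C$.

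Next I would set up the counting. Fix $C$ and fix the external lengths $m_{n,y}$, $y \in (\partial A)\setminus C$; these determine the distances $d_{t_n}(s,y)$ for all $y \in \partial A$ (for $y \in C$ the distance is determined once the arrows in $A$ are chosen, but the ordering/differences among $\partial A$-vertices that matter are governed by the $m$'s and the forest structure). Running the Burning Algorithm, the burnt region grows inward from $\partial A$, and at each step the set of newly burnt vertices in $A$ is forced; the arrow $e_x$ assigned to a burning vertex $x$ is $\alpha_{P_x,K_x}^{-1}(\eta_x)$, so \emph{given the burning order}, the constraint $\eta_x = h_x$ pins down $e_x$ uniquely \emph{provided} $h_x \in K_x$, and makes the event impossible if $h_x \notin K_x$. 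The point is that $K_x$ depends only on $n_x$ and $|P_x|$, which in the tree depend only on the burning times of the neighbours of $x$ — hence, via the argument of the previous paragraph, only on $C$ and combinatorial relations that are \emph{independent of the actual values} $m_{n,y}$ (only their relative order through the forest matters, and a self-avoiding path of any length $m_{n,y}\ge 1$ from $y$ contributes the same way). Thus whether we are in case (A) or case (B) is a function of $C$ and $\{h_x\}$ alone: either for \emph{some} intermediate burning step the forced $K_x$ fails to contain $h_x$ (case (A), and this happens regardless of the $m$'s), or at every step $h_x \in K_x$ and then the arrows in $A$ are reconstructed step-by-step, uniquely, giving exactly one valid configuration (case (B)).

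\textbf{The main obstacle} I anticipate is the claim that the dichotomy does not depend on the particular values $m_{n,y}$ but only on $C$ — one must check that changing some $m_{n,y}$ (while keeping all $\ge 1$) does not change the \emph{order} in which vertices of $A$ burn relative to each other, nor the values $n_x, |P_x|$, and hence does not change the $K_x$'s. This requires a careful induction on burning time: the key fact is that a vertex $x\in A$ first satisfies $\eta_x \ge \deg_{U(i-1)}(x)$ based only on how many of its neighbours have already burnt, and in a tree the neighbours of $x$ lying outside $A$ burn ``eventually and all at the same relative stage'' in the sense relevant to $x$ — more precisely, one shows by induction that the restriction of the burnt set to $A$ after step $i$ depends on the external data only through $C$. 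I would isolate this as a sublemma and prove it by induction on $i$, using that $T^d$ is a tree so that each $y \in (\partial A)\setminus C$ lies on a pendant path that burns monotonically inward toward $A$ and contributes to $\deg_{U(\cdot)}(x)$ only at the single moment it reaches a neighbour of $x$. Once that sublemma is in hand, the alternative (A)/(B) and the ``exactly one choice of arrows'' statement in (B) follow by the uniqueness of $\sigma_{G_n}$ (Lemma~\ref{lem:bijection}) restricted to the forced burning order.
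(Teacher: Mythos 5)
There is a genuine gap, and it sits exactly at the step you yourself flag as the main obstacle. Your argument hinges on the sublemma that the burning data in $A$ (the burnt set restricted to $A$ at each step, and hence $n_x$, $|P_x|$, $K_x$) depends on the outside only through $C$ and not on the actual lengths $m_{n,y}$. That claim is false. Take $d=3$, $A=\{x_1,x_2\}$ two adjacent vertices, $C=\emptyset$, so $\partial A=\{y_1,y_2,y_3,y_4\}$ with $y_1,y_2\sim x_1$ and $y_3,y_4\sim x_2$. If all $m_{n,y_i}=1$, then $x_2$ (with $h_{x_2}=1$, say) burns at time $2$ with $n_{x_2}=2$ and $K_{x_2}=\{1,2\}$; if instead $m_{n,y_1}=m_{n,y_2}=1$ and $m_{n,y_3}=m_{n,y_4}=3$, then $x_2$ burns only after $x_1$, with $n_{x_2}=3$ and $K_{x_2}=\{0,1\}$. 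So both the burning order within $A$ and the windows $K_x$ really do change with the $m_{n,y}$'s, and (continuing the example with $h_{x_1}=2$) the unique arrow configuration realizing $\{\eta_{n,x}=h_x\}$ changes as well: for one choice of lengths the arrow at $x_1$ must point to $y_1$ or $y_2$, for another it must point to $x_2$. The lemma only asserts that \emph{realizability} (the (A)/(B) dichotomy) is insensitive to the $m_{n,y}$'s, not that the burning data or the realizing arrows are; your induction would prove too much, and its base claim cannot be repaired in the form you state it. Since the whole point of the lemma is precisely to rule out a dependence of realizability on the relative sizes of the $m_{n,y}$'s, asserting $m$-independence of the $K_x$'s essentially assumes the conclusion.

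The paper closes this gap with an idea your sketch is missing: it encodes the constraints ``arrows at $C$ point into $A$'' and ``each $y\in(\partial A)\setminus C$ is joined to $s$ by a path of length $m_{n,y}$ avoiding $A$'' into an auxiliary sandpile. One forms the graph $G_{A,C}$ (the graph induced on $A\cup\partial A$ with a pendant path of length $m_{n,y}$ glued at each $y\in(\partial A)\setminus C$, all ending at $s$) and the configuration $\eta(h)$ equal to $h$ on $A$, $0$ on $C$ and $1$ elsewhere. An induction on burning time shows that for \emph{any} tree $t_n$ compatible with $C$, the $m_{n,y}$'s and $\{\eta_{n,x}=h_x\}$, the burning of $\eta_n$ in $G_n$ and of $\eta(h)$ in $G_{A,C}$ coincide on $A\cup\partial A$; and whether $\eta(h)$ is recurrent in $G_{A,C}$ is manifestly independent of the $m_{n,y}$'s (a pendant path of $1$'s can never take part in a forbidden subconfiguration). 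This yields the dichotomy, with the unique arrow configuration in case (B) read off from the burning of $\eta(h)$ — and, correctly, depending on the $m_{n,y}$'s. If you want to keep your direct approach, you would have to replace your sublemma by a comparison argument of exactly this type; the choice of boundary heights $0$ on $C$ and $1$ on $(\partial A)\setminus C$ is the key device that makes the induction go through.
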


\begin{lemma}
\label{lem:sym2}
Suppose that $C \subset \partial A$ and that Case (B) holds 
in Lemma \ref{lem:sym1}. Let $\sigma_y$, 
$y \in (\partial A) \setminus C$ be fixed self-avoiding paths 
from each $y$ to $s$ that avoid $A$. Then
\eqnspl{e:event-with-paths}
{ &\mu_n \left[ \cC(t_n) = C;\, 
     \pi_{n,y}(t_n) = \sigma_y,\, y \in (\partial A) \setminus C;\, 
     \eta_{n,x}(t_n) = h_x,\, x \in A \right] \\
  &\qquad = f_{A,C} (q_{n,y'},\, y' \in \partial A) 
    \prod_{y \in (\partial A) \setminus C} q_{n,y}(\sigma_y) }
for some function $f_{A,C} : [0,1]^{\partial A} \to [0,1]$, whose
form only depends on the pair $(A,C)$, and not on
the $\sigma_y$'s. The statement extends to Case (A),
by taking $f_{A,C}$ to be the $0$ function.
\end{lemma}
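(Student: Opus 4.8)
Case (A) is immediate: by Lemma \ref{lem:sym1} the event $\{\eta_{n,x}=h_x,\ x\in A\}$ then occurs for \emph{no} choice of arrows in $A$, so for every family $(\sigma_y)_{y\notin C}$ the left-hand side of \eqref{e:event-with-paths} vanishes, and $f_{A,C}\equiv 0$ works. So from now on assume Case (B).

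The first step is to translate the left-hand side of \eqref{e:event-with-paths} into a statement about $\mu_{G_n}$ via Corollary \ref{cor:uniform}. By Lemma \ref{lem:depend}(iii), whether $\{\eta_{n,x}=h_x,\ x\in A\}$ holds depends on $t_n$ only through the oriented edges of $t_n$ with an endpoint in $A$ --- which also records $C=\cC(t_n)$ and the rooted forest $t_n\cap\bar A$, where $\bar A:=A\cup\partial A$ --- together with the numbers $m_{n,y}=d_{t_n}(s,y)$ for $y\in\partial A$; for $y\notin C$ these equal $|\sigma_y|$, while for $y\in C$ they are determined by the former data once one notices that every component of $t_n\cap\bar A$ leaves $\bar A$ through a vertex of $\partial A\setminus C$. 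Since we are in Case (B), Lemma \ref{lem:sym1} provides exactly one oriented local picture $\kappa$ on the edges meeting $A$ that is compatible with $\cC=C$ and produces $\eta_{n,x}=h_x$ for all $x\in A$, and $\kappa$ is insensitive to the actual lengths $m_{n,y}$. Thus, on the event whose probability we want, $t_n$ restricted to the edges meeting $A$ is frozen to this explicit $\kappa$.

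The computation itself I would carry out with Wilson's algorithm on $G_n$ rooted at $s$ (Theorem \ref{thm:Wilson}), using an enumeration that lists the vertices of $\bar A$ first. The decisive feature of $T^d$ is that a self-avoiding path is a geodesic, so the loop-erasure of a simple random walk run until it leaves $V_n$ is simply the geodesic from its start to its (random) exit vertex, followed by the edge to $s$; each loop-erased path built by the algorithm is thus determined by a single exit vertex, up to merging with the portion of the tree already constructed. Consequently the oriented edges of $t_n$ meeting $A$ --- hence $\cC(t_n)$ and $t_n\cap\bar A$ --- are a deterministic function of the finitely many events ``the walk started at $v\in\bar A$ reaches $A$ before leaving $V_n$; if so, where; and if not, along which path it escapes''. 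Conditioning the walks on their behaviour up to the first visit to $A$, the walks started at $y\in\partial A\setminus C$ that are required to escape along $\sigma_y$ contribute, by the very definition \eqref{e:path-event}, the factors $q_{n,y}(\sigma_y)$, while every remaining choice --- which boundary walks enter $A$, at which vertices, and how the loop-erased pieces inside $\bar A$ merge and orient themselves so as to reproduce exactly $\kappa$ --- involves only whether and where a walk hits $A$, i.e.\ only the escape probabilities $q_{n,y'}$, $y'\in\partial A$. Reassembling these pieces yields \eqref{e:event-with-paths}, with $f_{A,C}$ the resulting internal factor, a fixed expression in $(q_{n,y'})_{y'\in\partial A}$ governed by $(A,C)$.

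I expect the last step to be the real obstacle. What must be pinned down is that this ``internal'' factor depends on the ambient graph $G_n$ only through the $|\partial A|$ numbers $q_{n,y'}$, with a form dictated by $(A,C)$ alone --- in particular not by the tails $\sigma_y$. This is where the tree structure is used to its full strength: from the viewpoint of $\bar A$ the walk is a succession of excursions, each of which either returns to $\bar A$ or escapes $V_n$ once and for all (with probability $q_{n,y'}$ from $y'$), and, by the Markov property on the tree, the continuation of a walk past a vertex is conditionally independent of its past; so the bookkeeping collapses to a finite, explicitly enumerable combinatorial sum in the $q_{n,y'}$, the $\sigma_y$ entering only through the already-extracted factors $q_{n,y}(\sigma_y)$. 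Carrying out this reorganization carefully, and checking that the resulting $f_{A,C}$ does take values in $[0,1]$, is the heart of the argument; Case (A) is then recovered as the degenerate instance $f_{A,C}\equiv 0$.
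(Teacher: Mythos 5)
There is a genuine gap: your text is a plan, not a proof, and the step you yourself flag as ``the heart of the argument'' --- that the internal bookkeeping collapses to a quantity of the form $f_{A,C}(q_{n,y'},\,y'\in\partial A)$ whose \emph{form} depends only on $(A,C)$ and not on the $\sigma_y$'s --- is exactly the content of the lemma and is nowhere established. The paper supplies a concrete mechanism for this: it replaces the exterior of $\bar A:=A\cup\partial A$ by a finite auxiliary \emph{weighted} graph $G'_{A,C}$ obtained by tracing the walk on $\bar A\cup\{s\}$, with an edge $e_y$ of weight $q_{n,y}(1-q_{n,y})^{-1}$ for $y\in C$ and a split pair of edges $f_{y,1},f_{y,2}$ of weights $q_{n,y}(\sigma_y)(1-q_{n,y})^{-1}$ and $(q_{n,y}-q_{n,y}(\sigma_y))(1-q_{n,y})^{-1}$ for $y\in(\partial A)\setminus C$; it then couples Wilson's method on $G_n$ with Wilson's method on $G'_{A,C}$, identifies the event in \eqref{e:event-with-paths} with the event that the small tree equals an explicit edge set, and factorizes its probability as $p_0\,p_{A,1}\,p_{A,2}\,p_C$, the only genuinely new point being that $p_{A,2}=|\cT_{G''_{A,C}}|^{-1}$ after contracting the already-forced edges, hence a constant depending only on $(A,C)$. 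Some such device (trace chain plus a spanning-tree computation on the small network, or an equivalent last-exit decomposition) is indispensable; without it the assertion that ``every remaining choice involves only the $q_{n,y'}$'' is precisely what has to be proved.

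Moreover, one concrete assertion in your sketch is wrong as stated: running Wilson's method on $G_n$ with the vertices of $\bar A$ first, the event $\{\pi_{n,y}(t_n)=\sigma_y\}$ does \emph{not} contribute the factor $q_{n,y}(\sigma_y)$ ``by the very definition \eqref{e:path-event}''. On the tree, $\LE(S^y[0,\tau_n])=\sigma_y$ only forces the exit vertex; the walk from $y$ may well visit $A$ and have those excursions erased, so the raw probability of this branch is $\P[\LE(S^y[0,\tau_n])=\sigma_y]$, which is strictly larger than $q_{n,y}(\sigma_y)$ in general. Extracting exactly $q_{n,y}(\sigma_y)$ requires a last-exit (or excursion/trace) decomposition at $y$ --- this is what the weights of $f_{y,1}$ and $f_{y,2}$ encode in the paper's construction, where the coupling matches ``$f_{y,1}$ is the last edge traversed'' with $\LE(S[\hat\tau_y,\tau_n])=\sigma_y$ --- and the leftover pieces of that decomposition must then be shown to depend only on the $q_{n,y'}$'s, which again is the unproven core. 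A related point you gloss over: for walks started in $A$ or in $C$, ``whether and where they hit $A$'' is not sufficient information either, since in Wilson's method they are stopped on hitting the previously built branches (including the $\sigma_{y'}$'s outside $\bar A$), and turning this interaction into a function of the $q_{n,y'}$'s alone is part of the same missing argument. So the overall strategy is reasonable and close in spirit to the paper's, but the decisive construction is absent and one key intermediate claim is incorrect as written.
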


\begin{proof}[Proof of Lemma \ref{lem:sym1}]
Consider the following auxiliary graph. We start with
the subgraph of $G$ induced by $A \cup \partial A$. For each 
$y \in (\partial A) \setminus C$ we glue a path of length $m_{n,y}$ 
at $y$. All glued on paths end at the common endpoint $s$, that 
serves as the sink. No new edges are added for vertices in $C$. 
We denote this graph by $G_{A,C}$ (the dependence on the $m_{n,y}$'s 
is suppressed in the notation).
Consider the following sandpile configuration $\eta(h)$ on $G_{A,C}$.
On the set $A$, $\eta(h)$ equals $h$, on $C$ it equals $0$, and on 
the rest of the vertices it equals $1$. It is easy to see using 
the Burning Algorithm, that whether $\eta(h) \in \cR_{G_{A,C}}$ or not
is independent of the values of $m_{n,y}$. We claim that if 
$\eta(h) \not\in \cR_{G_{A,C}}$ then the statements in Case (A) hold, 
and if $\eta(h) \in \cR_{G_{A,C}}$ then the statements in Case (B) hold.

Consider any $\eta_n \in \cR_{G_n}$, for which $\eta_{n,x} = h_x$, 
$x \in A$, and for which the Burning Algorithm produces a tree 
$t_n$ with $\cC(t_n) = C$, and paths $\pi_{n,y}$ with lengths $m_{n,y}$. 
We consider the burning of $\eta_n$ in $G_n$ in parallel to the burning 
of $\eta(h)$ in $G_{A,C}$. We show that inside 
$\partial A \cup A$, each site will burn at the same time in the 
two processes. 

Since the time of burning equals graph distance 
from the sink in the tree produced by the algorithm, in both 
configurations the first time when a vertex of $\partial A$ burns is 
$m_1 := \min \{ m_{n,y} : y \in (\partial A) \setminus C \}$.
Let $y_{1,1}, \dots, y_{1,r_1}$ be the list of $y$'s for which the
minimum is achieved. After time $m_1$, the status of 
vetices in the subtree of $V_n$ emanating from each $y_{1,i}$
away from $A$ has no influence on the burning of vertices in 
$A \cup \partial A$ (they have been disconnected by the burning
of the vertex $y_{1,i}$). Hence we may discard these subtrees from 
$V_n$ for the rest of the process.
Let $m_2 := \min \{ m_{n,y} : m_{n,y} > m_1,\, 
(y \in \partial A) \setminus C \}$, and let $y_{2,1}, \dots, y_{2,r_2}$
be the list of $y$'s for which the minimum is achieved.

We claim that at all times $m_1 \le m \le m_2$, the two burning 
processes agree in $A \cup \partial A$. We show this by 
induction on $m$. The claim holds for $m = m_1$, as in both 
processes precisely $y_{1,1}, \dots, y_{1,r_1}$ are burnt at time 
$m_1$. Assume the claim holds for some $m$ with $m_1 \le m < m_2$. 
Let $z \in A$ be a vertex that is unburnt at time $m$ (in 
both configurations, necessarily).
The equality $\eta(h)_z = h_z = \eta_{n,z}$ and the
induction hypothesis ensures that $z$ burns at time $m+1$ in 
$\eta(h)$ if and only if it burns in $\eta_n$.
Let now $z \in C$, and let $z' \in A$ be the unique neighbour
of $z$ in $A$. Since $[z,z'] \in t_n$, $z$ will burn in $\eta_n$ 
at time $m+1$ if and only if $z'$ burnt at time $m$. By the 
induction hypothesis, the latter occurs if and only if 
$z'$ burnt in $\eta(h)$ at time $m$. Then by the definition 
$\eta(h)_z = 0$ we get that this happens if and only if 
$z$ burns in $\eta(h)$ at time $m+1$. 
Finally, consider a vertex $z \in (\partial A) \setminus C$
that is unburnt at time $m$ (in both configurations, necessarily).
Let $z' \in A$ be its unique neighbour in $A$. 
Since $[z',z] \in t_n$, $z'$ burns after $z$ in $\eta_n$, and
hence by the induction hypothesis $z'$ is unburnt at time $m$
in both $\eta_n$ and $\eta(h)$. 
In $\eta_n$, $z$ will burn at time $m+1$ if and only if 
$m+1 = m_2 = m_{n,z}$, and $z = y_{2,i}$ for some $1 \le i \le r_2$.
Due to the definition $\eta(h)_z = 1$ and the fact that 
$z'$ is unburnt in $\eta(h)$ at time $m$, this is equivalent 
to $z$ burning in $\eta(h)$ at time $m+1$. This completes the 
induction. 

We can now iterate the above argument until there are 
no more burnable vertices in $A \cup \partial A$, showing
that the two burning processes are identical in 
$A \cup \partial A$.

The equality of the burning processes gives that if 
$\eta(h) \not\in \cR_{G_{A,C}}$, then there 
can be no tree with the given $h$, $C$ and $m_{n,y}$'s. 
If $\eta(h) \in \cR_{G_{A,C}}$, then there is exactly
one possible arrow configuration in $A$, namely the one 
given by the burning of $\eta(h)$ in $G_{A,C}$ (here 
we use the same $\alpha_{P,K}$'s in the graphs 
$G_{A,C}$ and $G_n$). This completes the proof.
\end{proof}

\begin{proof}[Proof of Lemma \ref{lem:sym2}]
Consider the following auxiliary weighted graph $G' = G'_{A,C}$. 
We add to the graph induced by $A \cup \partial A$ the 
vertex $s$, and the following edges: for any $y \in C$ 
there is an edge $e_{y}$ between $y$ and $s$ with weight 
$w(e_{y}) = q_{n,y} (1 - q_{n,y})^{-1}$; 
and for any $y \in (\partial A) \setminus C$
there are edges $f_{y,1}$ and $f_{y,2}$ between $y$ and $s$,
with respective weights 
$w(f_{y,1}) = q_{n,y}(\sigma_y) (1-q_{n,y})^{-1}$
and $w(f_{y,2}) = (q_{n,y} - q_{n,y}(\sigma_y))(1 - q_{n,y})^{-1}$. 
All other edges have weight $1$. 

Observe that the weights 
have been chosen in such a way that the probability for 
the network random walk started at $y \in \partial A$ to 
reach $s$ before reaching $A$ is $q_{n,y}$, the same as it 
was in $G_n$. Let $S$ be a network random on $G_n$ stopped 
at time $\tau_n$ (the hitting time of $s$), and let 
$S'$ be a network random walk on $G'_{A,C}$, stopped at the
hitting time $\tau'$ of $s$. Let $\xi_k$ be 
the time of the $k$-th visit by $S$ to the set 
$A \cup \partial A \cup \{ s \}$. The choice of weights 
implies that if $S(0) = S'(0) \in A \cup \partial A$, then 
$\{ S(\xi_k) \}_{k \ge 0}$ has the same distribution as
$\{ S'(k) \}_{k \ge 0}$. We can couple the two walks in such 
a way that we have $S(\xi_k) = S'(k)$ for all $k \ge 0$.
Moreover, by the choice of the weights, the coupling can 
be arranged in such a way that
for each $y \in (\partial A) \setminus C$, the edge
$f_{y,1}$ is the last edge traversed by $S'$ if and only if
$\LE(S[\hat{\tau}_y,\tau_n]) = \sigma_y$, where 
$\hat{\tau}_y$ is the time of the last visit to $y$ by $S$.

Let $a = | A |$, $b = | \partial A |$ and $c = | C |$.
Let $u_1, \dots, u_{a+b}$ be an enumeration of the vertices
in $A \cup \partial A$, where 
$\{ u_1, \dots, u_{b-c} \} = (\partial A) \setminus C$,
$\{ u_{b-c+1}, \dots, u_{b+a-c} \} = A$ and
$\{ u_{b+a-c+1}, \dots, u_{b+a} \} = C$. 
Let $S^i$ and $S'^i$ be network random walks on $G_n$ and
$G'_{A,C}$, respectively, with $S^i(0) = u_i = S'^i(0)$,
coupled as above, and assume that these pairs are 
independent for $1 \le i \le a+b$.
We use Wilson's method on $G_n$ and $G'_{A,C}$ with the
above enumeration of vertices and the coupled random walks
to generate $t_n$ distributed according to $\mu_n$ and
$t'$ distributed according to $\mu_{G'_{A,C}}$.

Our assumptions and Lemma \ref{lem:sym1} imply that there 
is a unique arrow configuration in $A$ that realizes 
the event $\{ \eta_{n,x} = h_x,\, x \in A \}$, given the
restrictions $\cC(t_n) = C$, $\pi_{n,y} = \sigma_y$. 
Let $\vec{F}_A := \{ [v_1,v'_1], \dots, [v_a,v'_a] \}$ be 
this arrow configuration, where $v_j = u_{b-c+j}$.
Observe that the edges leading from $A$ to 
$(\partial A) \setminus C$ are always in $\vec{F}_A$, and 
hence we may assume without loss of generality that the 
indexing is such that $v'_1 = u_1, \dots, v'_{b-c} = u_{b-c}$. 
We define
$\vec{F}_0 = \{ f_{y,1} : y \in (\partial A) \setminus C \}$,
where these edges are oriented away from $\partial A$,
let $\vec{F}_{A,1} = \{ [v_1,u_1], \dots, [v_{b-c}, u_{b-c}] \}$, 
let $\vec{F}_{A,2} = \vec{F}_A \setminus \vec{F}_{A,1}$, and
let $\vec{F_C} := \{ e_y : y \in C \}$, where
these edges are oriented towards $C$.
The coupling ensures that the event 
in \eqref{e:event-with-paths} occurs if and only
if $t'$ consists of the edges:
\eqn{e:edges}
{ \vec{F}
  := \vec{F_0} \cup \vec{F_A} \cup \vec{F_C}. }
In order to complete the proof, we need to show that 
the probability that Wilson's method on $G'_{A,C}$
produces $\vec{F}$ is of the claimed form. 

For $i = 1, \dots, b-c$, the conditional probability 
of the event 
$\{ \cF'_i = \cF'_{i-1} \cup \{ f_{u_i,1} \} \}$
given the event 
$\{ \cF'_{i-1} = \{ f_{u_j,1} : 1 \le j < i \} \}$ 
is of the form 
\eqnst
{ f_{i,A,C} ( q_{n,y} : y \in \partial A ) 
    q_{n,u_i}(\sigma_{u_i}), }
where the form of $f_{i,A,C} : [0,1]^{\partial A} \to [0,1]$ only 
depends on the pair $(A,C)$. In particular,
\eqnst
{ p_0 
  := \P [ \cF_{b-c} = \vec{F_0} ]
  = f'_{A,C} (q_{n,y} : y \in \partial A )
     \prod_{y \in (\partial A) \setminus C} q_{n,y}(\sigma_y), }
where the form of the function $f'_{A,C}$ only depends 
on the pair $(A,C)$. Let 
\eqnsplst
{ p_{A,1}
  &:= \P [ \cF_{2(b-c)} = \vec{F}_0 \cup \vec{F}_{A,1} \,|\,
      \cF_{b-c} = \vec{F}_0 ] \\
  p_{A,2} 
  &:= \P [ \cF_{b+a-c} = \vec{F}_0 \cup \vec{F}_{A,1} 
     \cup \vec{F}_{A,2} \,|\,
     \cF_{2(b-c)} = \vec{F_0} \cup \vec{F}_{A,1} ] \\
  p_C
  &:= \P [ t' = \vec{F} \,|\, 
     \cF_{b+a-c} = \vec{F_0} \cup \vec{F_A} ]. }
Here $p_{A,1} = f''_{A,C}$, where again the form of the 
function $f''_{A,C}$ only depends on the pair $(A,C)$,
and $p_C = \prod_{y \in C} (1 - q_{n,y})$. We show that 
$p_{A,2}$ is a constant depending on $(A,C)$, and this
will complete the proof. The operation of contracting 
an edge in a graph means identifying its endpoints
to a single vertex. Let $G''_{A,C}$ denote the graph obtained
from $G'_{A,C}$ by contracting all edges 
in $\vec{F}_0 \cup \vec{F}_{A,1} \cup \vec{F}_C$. 
Conditional on the event
$\{ \vec{F}_0 \cup \vec{F}_{A,1} \cup \vec{F}_C \subset t' \}$,
the distribution of $t'$ is equal to the distribution 
of $t'' \cup \vec{F}_0 \cup \vec{F}_{A,1} \cup \vec{F}_C$,
where $t''$ is a sample from $\mu_{G''_{A,C}}$. Since 
all non-loop edges in $G''_{A,C}$ have weight $1$, 
$\mu_{G''_{A,C}}$ is uniform on $\cT_{G''_{A,C}}$.
It follows that $p_{A,2} = | \cT_{G''_{A,C}} |^{-1}$, that 
is a constant depending only on the pair $(A,C)$.

Since $\P [ t' = \vec{F} ] = p_0\, p_{A,1}\, p_{A,2}\, p_C$,
the proof of the Lemma is complete.
\end{proof}

\begin{proof}[Proof of Theorem \ref{thm:tree}] 
Let $C \subsetneqq \partial A$ and suppose that the event
$\{ \eta_{n,x} = h_x,\, x \in A \}$ is realized by
some tree $t_n$ with $\cC(t_n) = C$. For each 
$y \in (\partial A) \setminus C$, let $\sigma_y$ 
be any self-avoiding path from $y$ to $s$ that avoids $A$.
Lemma \ref{lem:sym1} shows that there is a unique 
arrow configuration $\vec{F}_A$ in $A$ (possibly 
depending on $C$ and the $\sigma_y$'s), such that 
any tree $t'_n$ with $\cC(t'_n) = C$ such that $t'_n$ 
contains the $\sigma_y$'s and $\vec{F}_A$ also
realizes the event $\{ \eta_{n,x} = h_x,\, x \in A \}$.
Lemma \ref{lem:sym2} shows that the probability mass
of all such trees is given by the expression 
on the right hand side of \eqref{e:event-with-paths}.
It follows that we have
\eqn{e:1stexpr}
{ \nu_{G_n} [ \eta_{n,x} = h_x,\, x \in A ]
  = \sum_{C \subsetneqq \partial A} 
    \sum_{ \{ \sigma_y : y \in (\partial A) \setminus C\} }
    f_{A,C}( q_{n,y} : y \in \partial A )
    \prod_{y \in (\partial A) \setminus C} q_{n,y} ( \sigma_y ). }
It is crucial here that the second sum is over \emph{all}
collections self-avoiding paths from the $y$'s to $s$,
avoiding $A$, and that $f_{A,C}$ is independent of the paths.

Observe that $\sum_{\sigma_y} q_{n,y}(\sigma_y) = q_{n,y}$, and 
therefore, performing the sum over the $\sigma_y$'s 
in \eqref{e:1stexpr} gives
\eqnst
{ \nu_{G_n} [ \eta_{n,x} = h_x,\, x \in A ]
  = \sum_{C \subsetneqq \partial A} 
    f_{A,C}( q_{n,y} : y \in \partial A )
    \prod_{y \in (\partial A) \setminus C} q_{n,y}. }
Regardless of what the exhaustion is, we have
\eqnst
{ \lim_{n \to \infty} q_{n,y}
  = \P [ S[0,\infty) \cap A = \es \,|\, S(0) = y ] 
  = \frac{d-2}{d-1}. }
Hence we have
\eqnst
{ \nu [ \eta_{n,x} = h_x,\, x \in A ]
  = \sum_{C \subsetneqq \partial A} 
    f_{A,C} \left( \frac{d-2}{d-1}, \dots, 
    \frac{d-2}{d-1} \right)
    \left( \frac{d-2}{d-1} \right)^{|(\partial A) \setminus C|}. }
\end{proof}

\begin{remark}
As an example, taking $A = \{ o \}$, one can recover the 
computation of height probabilities by 
Dhar and Majumdar \cite{DM90} from the above.
\end{remark}

\section{Concluding remarks}
\label{sec:conclusion}

\subsection{Finiteness of avalanches}
\label{ssec:finiteness}

Following the program introduced by Maes, Redig and Saada \cite{MRS02},
once the existence of the limit $\nu$ has been established,
it is natural to ask if one can define sandpile dynamics
on the infinite graph $G$. The first question is whether
adding a particle at a vertex $o$ in a sample configuration 
from the measure $\nu$ produces an avalanche that is 
finite $\nu$-a.s.~(that is, only finitely many topplings are 
necessary to stablilize it). In \cite{JR08} a sufficient condition 
was given, in the case of $\Z^d$, $d \ge 3$, in terms of a 
certain modification 
$\tilde{\mu}$ of the measure $\mu$. Let $\tilde{G}_n$ be the 
graph obtained from $G_n$ by wiring $o$ to the sink, and let 
$\tilde{\mu} = \lim_{n \to \infty} \mu_{\tilde{G}_n}$ be the limiting 
wired spanning forest measure. Let $t_o$ denote the component
of $o$ under the measure $\tilde{\mu}$. It was shown in \cite{JR08}
that if $\tilde{\mu} [ | t_o | < \infty ] = 1$, then avalanches 
on $G$ are finite $\nu$-a.s.

The arguments in \cite{JR08} apply without change to show 
that on any transient graph, avalanches are $\nu$-a.s.~finite,
if $\tilde{\mu} [ | t_o | < \infty ] = 1$. 
Lyons, Morris and Schramm \cite{LMS08} analyzed $t_o$ under
general conditions, in particular have shown that $t_o$ is
finite in any transitive graph with at least cubic volume 
growth. They have also shown that finiteness of $t_o$
is equivalent to the one-end property. Since we assumed the 
one-end property in Theorems \ref{thm:AJlow} and \ref{thm:trans}, 
it follows that the limiting measures constructed in these
theorems have a.s.~finite avalanches, if the graph is 
transient.

Let us also discuss the case of the $d$-regular tree. 
Let $o \in T^d$ be a fixed vertex. Take a sample configuration
from the measure $\nu$, and add a particle at $o$. Let $N$ 
denote the number distinct vertices that topple in the
stabilization of this configuration. The computations 
in \cite{DM90} show that $\nu [ N = n ] \sim c n^{-3/2}$ as
$n \to \infty$, and also that $\sum_{n=0}^\infty \nu [ N = n ] = 1$.
Hence avalanches are $\nu$-a.s.~finite. As above, finiteness 
can also be derived from the well-known fact that each tree in 
the WSF has one end \cite{Hagg98} 
(see also \cite[Section 11]{BLPS01}). 

\begin{question}
Are avalanches $\nu$-a.s.~finite on $\Z^2$? More generally, are
avalanches $\nu$-a.s.~finite on a recurrent graph $G$ such that 
the WSF has one end a.s.? See \cite{JL07} for a related open 
question regarding a weaker property.
\end{question}

\subsection{Stationary Markov process}
\label{ssec:process}

Having established finiteness of avalanches, one can apply 
the general machinery developed in \cite{MRS02} to show 
the existence of a natural stationary Markov process
with invariant measure $\nu$. Suppose that $G = (V,E)$ is
transient. Let $\varphi : V \to (0,\infty)$
be a function such that $\sum_{x \in V} \varphi(x) G(x,o) < \infty$,
where $G = \Delta^{-1}$. Given an exhaustion 
$V_1 \subset V_2 \subset \dots \subset V$, consider the
continuous time sandpile Markov chain on $G_n$, where 
particles are added at $x \in V_n$ at Poisson rate $\varphi(x)$.
This Markov chain has invariant measure $\nu_{G_n}$, 
and it follows from the results of \cite{MRS02} that its
semigroup strongly converges in $L^2(\nu)$ to the semigroup 
of a Markov process with invariant measure $\nu$.

\appendix

\section{Appendix}
\label{A:mixing}

In this appendix we give the proof that the path-valued
process $\{ X_k \}_{k \ge 0}$ introduced in the proof of 
Lemma \ref{lem:goodblocks} is stationary and mixing.
We will write $p(x,y)$ for the transition probability
of $S$.

Let $[x_k(-m), \dots, x_k(-1), x_k(0), x_k(1), \dots, x_k(m)]$ 
be fixed finite paths in $G$, for $k = 0, \dots, K-1$, such that 
$x_k(0) = o$. Without loss of generality, we assume that $m > M$. 
Call two finite paths 
$y_1 = [y_1(-\ell_1), \dots, y_1(0) = o, \dots, y_1(\ell_2)]$ and 
$y_2 = [y_2(-\ell_1), \dots, y_2(0) = o, \dots, y_2(\ell_2)]$
\emph{equivalent}, $y_1 \equiv y_2$, if there exists 
$\bar\psi \in \AUT_o$ such that 
$\bar\psi y_1(j) = y_2(j)$ for each $j$.
This is clearly an equivalence relation.

We will use the following simple lemma, whose proof is
obvious.

\begin{lemma}
\label{lem:inv}
Suppose that $T$ is a transformation from a finite set of
paths $\cP_1$ into a finite set of paths $\cP_0$, where
$\cP_0$ and $\cP_1$ have the same number of elements.
Suppose that $T$ has the propoerty that 
$y_1 \equiv y_2$ if and only if $Ty_1 \equiv Ty_2$. 
Let $\la_i : \cP_i \to \R$, $i = 0,1$ be functions 
that are constant on equivalence classes, such that 
$\la_1(y) = \la_0(Ty)$. Then
\eqnst
{ \sum_{y \in \cP_1} \la_1(y)
  = \sum_{y' \in \cP_0} \la_0(y). }
\end{lemma}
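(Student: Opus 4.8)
The plan is to read off the conclusion as a mere reindexing of the sum along $T$. Since $T$ is a bijection of the finite set $\cP_1$ onto $\cP_0$ (this is what is meant by a ``transformation'' between sets of the same cardinality), and since by hypothesis $\la_1(y) = \la_0(Ty)$ for every $y$, one simply writes
\[
\sum_{y \in \cP_1} \la_1(y) = \sum_{y \in \cP_1} \la_0(Ty) = \sum_{y' \in \cP_0} \la_0(y'),
\]
the second equality being the substitution $y' = Ty$. That is essentially the whole proof.

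First, though, I would record why the remaining hypotheses are present, since this is the form in which the lemma gets used downstream. The condition ``$y_1 \equiv y_2$ if and only if $Ty_1 \equiv Ty_2$'' says exactly that $T$ carries $\equiv$-classes of $\cP_1$ onto $\equiv$-classes of $\cP_0$, inducing a bijection $\hat T$ between the two sets of classes with $|c| = |\hat T(c)|$ for each class $c$; and because the $\la_i$ are constant on classes and $\la_1 = \la_0 \circ T$, the common value $\la_1(c)$ equals the common value $\la_0(\hat T(c))$. Hence the two sums may be grouped class by class,
\[
\sum_{y \in \cP_1} \la_1(y) = \sum_{c} |c|\, \la_1(c) = \sum_{c} |\hat T(c)|\, \la_0(\hat T(c)) = \sum_{d} |d|\, \la_0(d) = \sum_{y' \in \cP_0} \la_0(y'),
\]
which gives the same conclusion in a way that makes the role of the equivalence structure explicit (in the application the $\la_i$ are functions of $\equiv$-classes of path segments and $T$ is a shift composed with an element of $\AUT_o$, so one works naturally at the level of classes).

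There is no real obstacle here; the author rightly calls the statement obvious. The only point that needs a moment's care is not to conflate a path with an equivalent one: $T$ need not fix any individual path, so the biconditional ``$y_1 \equiv y_2 \iff Ty_1 \equiv Ty_2$'' is precisely what makes the reindexing legitimate at the level of classes, and the constancy of the $\la_i$ on classes is what lets $\la_1$ and $\la_0$ be matched class by class. I would therefore present the one-line reindexing argument as the proof, preceded by the short remark above that identifies how each hypothesis is used.
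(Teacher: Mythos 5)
Your main argument is indeed the intended one: the paper states this lemma without any proof at all (it is introduced as a ``simple lemma, whose proof is obvious''), and the obvious proof is exactly the change of variables $y' = Ty$ that you give. Under your reading, in which $T$ is a bijection of $\cP_1$ onto $\cP_0$, the one-line reindexing is a complete and correct proof; note that under this reading the equivalence hypotheses and the constancy of the $\la_i$ on classes play no role in the lemma itself --- they matter only for how the lemma is verified and applied in the Appendix.

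Two cautions, both concerning your supplementary discussion rather than the main computation. First, the claim that the hypothesis ``$y_1 \equiv y_2$ if and only if $Ty_1 \equiv Ty_2$'' \emph{says exactly} that $T$ carries classes onto classes, inducing a bijection $\hat{T}$ on classes with $|c| = |\hat{T}(c)|$, is an overstatement: by itself the biconditional only gives a well-defined, injective map on classes. Indeed, if $T$ is merely a map between equinumerous finite sets, the lemma as literally stated is false: take $\cP_1 = \{a,b\}$ with $a \equiv b$, $\cP_0 = \{c,d\}$ with $c \not\equiv d$, $Ta = Tb = c$, $\la_0(c) = 1$, $\la_0(d) = 0$, $\la_1 \equiv 1$; every hypothesis in the statement holds, yet the two sums are $2$ and $1$. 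So the surjectivity of $\hat{T}$ and the equality of class sizes in your second argument come from the bijectivity of $T$ that you assumed at the outset, not from the biconditional. Second, since ``transformation from $\cP_1$ into $\cP_0$'' does not by itself assert bijectivity (the paper writes ``one-to-one transformation'' elsewhere when it means that), your interpretive step should be flagged as a genuine strengthening of the stated hypotheses rather than a gloss: the bijective reading is the only one under which the lemma is true, and correspondingly, when the lemma is invoked in the Appendix with $Ty(j) = \phi^{-1}_{y(M)}\, y(M+j)$, one must check (or arrange the choice of the automorphisms $\phi_x$ so) that this particular $T$ is a bijection of the two path sets, since this is not automatic for an arbitrary choice of the $\phi_x$'s. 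With that proviso, your proof is correct and coincides with the paper's (omitted) argument.
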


Consider the probability 
\eqnspl{e:X0rewrite}
{ &\P \left[ X_k(j) = x_k(j),\, 
     -m \le j \le m,\, 0 \le k \le K-1 \right] \\
  &\qquad = 
    \P \left[ \Psi_k \phi^{-1}_{S(kM)} S(kM+j) = x_k(j),\, 
     -m \le j \le m,\, 0 \le k \le K-1 \right] \\
  &\qquad =
    \sum_{y'}
    \P \left[
    S(j) = y'(j),\, -m \le j \le (K-1)M+m \right] \\
  &\qquad\qquad \times
    \prod_{k=0}^{K-1} 
    \P \left[ \Psi_k \phi^{-1}_{y'(kM)} y'(kM+j) = x_k(j),\, 
    -m \le j \le m \right], }
where the summation is over all paths $y'$ with parameter set
$\{ -m, \dots, (K-1)M + m \}$ that are at $o$ at time $0$.
The first factor in the right hand side of \eqref{e:X0rewrite}
equals
\eqnst
{ \prod_{j = -m}^{(K-1)M+m-1} p(y'(j),y'(j+1)). }
In order to abreviate the second factor, introduce 
the notation
$U_k y'(j) = \phi^{-1}_{y'(kM)} y'(kM+j)$, $-m \le j \le m$.
Then the right hand side of \eqref{e:X0rewrite} is
\eqnsplst
{ \la_0(y')
  &:= \prod_{j = -m}^{(K-1)M+m-1} p(y'(j),y'(j+1)) \\
  &\qquad \times \prod_{k=0}^{K-1} 
     \P \left[ \Psi_k U_k y' (j) = x_k(j),\, 
     -m \le j \le m \right]. }

Now consider
\eqnspl{e:X1rewrite}
{ &\P \left[ X_k(j) = x_{k-1}(j),\, 
     -m \le j \le m,\, 1 \le k \le K \right] \\
  &\qquad = 
    \P \left[ \psi_k \phi^{-1}_{S(kM)} S(kM+j) = x_{k-1}(j),\, 
     -m \le j \le m,\, 1 \le k \le K \right] \\
  &\qquad =
    \sum_{y}
    \P \left[
    S(j) = y(j),\, -m+M \le j \le KM+m \right] \\
  &\qquad\qquad \times 
    \prod_{k=1}^{K} 
    \P \left[ \Psi_k \phi^{-1}_{y(kM)} y(kM+j) = x_{k-1}(j),\, 
    -m \le j \le m \right] \\
  &\qquad =: \sum_y \la_1(y), }
where the summation is over paths $y$ with parameter set
$\{ -m+M, \dots, KM + m \}$ that are at $o$ at time $0$.
Introduce the map $T$, $Ty(j) = \phi^{-1}_{y(M)} y(M+j)$.
The first factor in the right hand side of \eqref{e:X1rewrite}
equals
\eqn{e:walkinv}
{ \prod_{j = -m+M}^{KM+m-1} p(y(j),y(j+1))
  = \prod_{j = -m}^{(K-1)M+m-1} p(Ty(j),Ty(j+1)). }
The second factor equals
\eqn{e:productoverk}
{ \prod_{k=0}^{K-1} 
  \P \left[ \Psi_{k+1} \phi^{-1}_{y((k+1)M)} y((k+1)M + j)
      = x_k(j),\, -m \le j \le m \right]. }
We claim that the path $\{ \phi^{-1}_{y((k+1)M)} y((k+1)M+j) \}_{j=-m}^m$
is equivalent to the path $U_k Ty$. Indeed, it is easy
to check that the automorphism 
$\tilde{\psi} = \phi^{-1}_{Ty(kM)} \phi^{-1}_{y(M)} \phi_{y((k+1)M)}$ 
does the job. Hence, using right invariance of $\la$,
the $k$-th factor in \eqref{e:productoverk} equals
\eqnsplst
{ &\P \left[ \Psi_{k+1} \tilde{\psi}^{-1} U_k Ty = x_k \right] 
  = 
  \la \left( \left\{ \Psi : \Psi \tilde{\psi}^{-1} U_k Ty = x_k 
     \right\} \right) \\
  &\qquad = 
  \la \left( \left\{ \Psi : \Psi U_k Ty = x_k 
     \right\} \tilde{\psi} \right) \\
  &\qquad = 
  \la \left( \left\{ \Psi : \Psi U_k Ty = x_k 
     \right\} \right) \\
  &\qquad =
  \P \left[ \Psi_k U_k Ty = x_k \right]. }
This and \eqref{e:walkinv} shows that $\la_1(y) = \la_0(Ty)$. 
A similar computation shows that $\la_0$ is 
constant on equivalence classes.

It is left to show that $y_1 \equiv y_2$ if and only if
$Ty_1 \equiv Ty_2$. Indeed, $y_2 = \bar{\psi} y_1$ 
if and only if $Ty_2 = \phi^{-1}_{y_2(M)} \bar{\psi} \phi_{y_1(M)} Ty_1$.
An application of Lemma \ref{lem:inv} now shows that
the expressions in \eqref{e:X0rewrite} and
\eqref{e:X1rewrite} equal, and this is sufficent
to conclude stationarity.

The proof of mixing can be carried out using a similar 
computation.
Suppose that $(K-1)M + m < tM - m$, and consider the
probability:
\eqnspl{e:X0Xtrewrite}
{ &\P \left[ X_k(j) = x_k(j),\, 
    -m \le j \le m,\, k \in \{0, \dots, K-1\} 
    \cup \{t \dots, t+K-1\} \right] \\
  &=
    \sum_{y_0} \sum_{y} \sum_{y_t}
    p(y_0) p(y) p(y_t) \nu( y_0 ) \\
  &\qquad \times
    \prod_{k=0}^{K-1} 
    \P \left[ \Psi_{t+k} \phi^{-1}_{y_t((t+k)M)} y_t((t+k)M+j) 
    = x_{t+k}(j),\, 
    -m \le j \le m \right]. }
Here the $y_0$-sum is over paths with parameter set 
$\{ -m, \dots, (K-1)M + m \}$ that are at $o$ at time $0$,
the $y$-sum is over paths with parameter set 
$\{ (K-1)M + m, \dots, tM - m \}$  
starting at $y_0((K-1)M+m)$, and the $y_t$-sum is over
paths with parameter set $\{ tM-m, \dots, (t+(K-1))M + m \}$
that start at $y(tM-m)$. The expressions $p(y_0)$, $p(y)$ 
and $p(y_t)$ stand for the products of random walk transition 
probabilities for these paths, and $\nu(y_0)$ is the expression 
containing $\Psi_0, \dots, \Psi_{K-1}$. 
Keeping $y_0$ and $y$ fixed, we introduce the
map $T$, $Ty_t (i) = \phi^{-1}_{y_t(tM)} y_t (tM + i)$,
$-m \le i \le (K-1)M + m$. Then $T$ maps paths starting
at $y(tM-m)$ to paths with parameter set 
$\{ -m, \dots, (K-1)M+m \}$ that are at $o$ at time $0$.
A straightforward computation then shows that
$\phi^{-1}_{y_t((t+k)M)} y_t((t+k)M + \cdot) \equiv
U_k Ty_t(\cdot)$. An application of Lemma \ref{lem:inv}
yields that (keeping $y_0$ and $y$ fixed)
\eqnsplst
{ &\sum_{y_t} p(y_t) 
    \prod_{k=0}^{K-1} 
    \P \left[ \Psi_{t+k} \phi^{-1}_{y_t((t+k)M)} y_t((t+k)M+j) 
    = x_{t+k}(j),\, 
    -m \le j \le m \right] \\
  &\qquad = \sum_{y'} \tilde{\la}_0(y'), }
where $\tilde{\la}_0$ has the same form as $\la_0$, with
$x_k$ replaced by $x_{t+k}$. We can now carry out 
the  summation over $y$ to yield a factor $1$, and
conclude that the expression in \eqref{e:X0Xtrewrite}
equals
\eqnspl{e:X0Xtindep}
{ &\P \left[ X_k(j) = x_k(j),\, 
    -m \le j \le m,\, 0 \le k \le K-1 \right] \\
  &\qquad \times 
  \P \left[ X_{t+k}(j) = x_{t+k}(j),\, 
    -m \le j \le m,\, 0 \le k \le K-1 \right]. }
The equality in \eqref{e:X0Xtindep} is sufficient to
conclude mixing.

\medbreak

\end{document}